\documentclass[10pt]{article}
\usepackage{times}
\usepackage{amsmath,amsfonts,amstext,amssymb,amsbsy,amsopn,amsthm,eucal,slashed,amstext,enumerate,accents}
\usepackage{color}
\usepackage{txfonts}
\usepackage{dsfont}
\usepackage{hyperref}
\usepackage{graphicx}   % for figures

%\usepackage{natbib}
%\setlength{\bibsep}{0.0pt}

%%%%
\markright{5555}
\numberwithin{equation}{section}
\setcounter{secnumdepth}{3}
\setcounter{tocdepth}{2}
  \theoremstyle{plain}
 \newtheorem{theorem}{Theorem}
 
\newtheorem{proposition}{Proposition}
 \newtheorem{lemma}{Lemma}
 \newtheorem{corollary}{Corollary}
\newtheorem*{conjecture}{Conjecture}

 \theoremstyle{remark}

 \newtheorem{remark}{Remark}

\theoremstyle{definition}

% \newtheorem{fact}[equation]{Fact}
% \newtheorem{question}[equation]{Question}
% \newtheorem{observation}[equation]{Observation}
% \newtheorem{subremark}[equation]{Subremark}

%    Absolute value notation

%%%%%

%\topmargin  = 0.2mm       %beyond 25.mm
%\oddsidemargin  =0.5mm      %  beyond 25.mm
%\evensidemargin =0.mm
%  beyond 25.mm
%\headheight =0.mm \headsep    =0.mm \textheight =20.mm
%\textwidth=150.mm
%\renewcommand{\baselinestretch}{1.15}

%\setlength{\textheight}{8.75in} \setlength{\textwidth}{6.5in}
%\setlength{\columnsep}{0.5in} \setlength{\topmargin}{0in}
%\setlength{\headheight}{0in} \setlength{\headsep}{0in}
%\setlength{\parindent}{1pc}
%\setlength{\oddsidemargin}{0in}  % Centers text.
%\setlength{\evensidemargin}{0in}

%\newcommand{\S}{\mathcal{S}}

\renewcommand{\div}{{\rm div}}

\newcommand{\pa}{\partial}

\newcommand{\Sn}{\mathbb{S}^n}
\newcommand{\Sp}{\mathbb{S}}
\newcommand{\N}{\mathbb{N}}

\newcommand{\Lap}{\Delta}

\newcommand{\RR}{\mathbb{R}}
\newcommand{\CC}{\mathbb{C}}

\allowdisplaybreaks

%%% ----------------------------------------------------------------------

\title{Green functions for GJMS operators on spheres, Gegenbauer polynomials and \\rigidity theorems}
\author{Xuezhang Chen\thanks{X. Chen:xuezhangchen@nju.edu.cn.}~ and Yalong Shi\thanks{Y. Shi:shiyl@nju.edu.cn.}\\
		{\small $^{\ast}$$^{\dag}$School of Mathematics \& IMS, Nanjing University, Nanjing 210093, P. R. China }}
\date{}

\begin{document}
\maketitle

\begin{abstract}
We derive explicit representation formulae of Green functions for GJMS operators on  $n$-spheres, including the fractional ones. These formulae have natural geometric interpretations concerning the extrinsic geometry of the round sphere. Conversely, we discover that this special feature uniquely characterizes spheres among closed embedded hypersurfaces in $\RR^{n+1}$. Furthermore, for $n=3,4,5$ we prove a strong rigidity theorem for Green functions of hypersurfaces in $\RR^{n+1}$ using the Positive Mass Theorem.

\medskip

{\bf MSC2020: } 35J08, 53C24 (58J60, 53C21)
\end{abstract}
\medskip
{\footnotesize \tableofcontents}

\section{Introduction}\label{sec:Intro}

Let $(M^n,g)$ be a closed orientable smooth Riemannian manifold with total volume $V$. The Green function $G(P,Q)$  of the Laplacian operator on $M$ is a function $G(P,Q)$ on $M\times M$ satisfying 
$$-\Delta_{g,P} G(P,Q)=\delta_Q(P)-\frac{1}{V}$$
in the distribution sense. The existence and uniqueness (up to a constant) of Green function can be proved by Hadamard's parametrix method, see for example Aubin \cite[Chapter 4]{aubin_book}. For $n$-dimensional closed manifolds of positive Yamabe constant and $n \geq 3$, under conformal normal coordinates the expansion of Green function for the conformal Laplacian $P_2^g=-\Delta_g+\frac{n-2}{4(n-1)}R_g$  had been explored in Lee-Parker \cite{Lee-Parker}, where $R_g$ is the scalar curvature.

The GJMS operator $P_{2k}^g$ for $k \in \mathbb{Z}_+$ is a conformally covariant scalar differential operator whose principal part agrees with $(-\Delta_g)^k$, generalizing the conformal Laplacian. See Graham, Jenne, Mason and Sparling \cite{GJMS}. These operators are defined for any $n$-manifold when $2k\leq n$ or $2k>n$ and $n$ is odd. The operator $P_4^g$ was discovered earlier by Paneitz \cite{Paneitz} in $1983$, and hence also referred to as `Paneitz operator'.  See also \eqref{def:Paneitz_operator} for the definition. For even $n$ and $2k>n$, it is known that in general there is no conformally covariant differential operator with principal part $(-\Delta_g)^{k}$. However, the operator $P_{2k}^g$ does exist when $g$ is Einstein or locally conformally flat. The importance of GJMS operator and its  intimately associated $Q$-curvature was emphasized by Branson. 

For $\sigma \in (0,\frac{n}{2}]$, given a Poincar\'e-Einstein manifold $(X^{n+1},M^n, g_+)$  and a representative $g$ in the conformal infinity $(M,[g])$, Graham and Zworski \cite{Graham-Zworski} used the scattering operator (or the generalized Dirichlet-to-Neumann map) to define the \emph{fractional} GJMS operator $P_{2\sigma}^g$ on $(M,g)$ with principal part $(-\Delta_g)^\sigma$. Moreover, $P_{2\sigma}^g$ is formally self-adjoint and has the conformal covariance property that given $\hat g=e^{2u}g$, $P_{2\sigma}^g(e^{\frac{n-2\sigma}{2}u}\varphi)=e^{\frac{n+2\sigma}{2}u} P_{2\sigma}^{\hat g}(\varphi), \forall~ \varphi \in C^\infty(M)$. In particular, when $\sigma \in \mathbb{Z}_+$, $P_{2\sigma}^g$ recovers the GJMS operator constructed in \cite{GJMS}. Readers are also referred to Ache-Chang \cite[Section 2.1]{Ache-Chang} for details. Moreover, on spheres these conformally covariant operators $P_{2\sigma}^{\Sn}$ with respect to the round metric $g_{\Sn}$ were defined earlier by Branson \cite[Theorem 2.8]{Branson95}, as intertwining operators from the viewpoint of representation theory, in the most general case for $\sigma\in \mathbb{C}$ with $-\sigma\notin \frac{n}{2}+\mathbb{N}$; an alternative approach to derive such conformally covariant operators was implicitly covered by Fefferman-Graham \cite{Fefferman-Graham}.

From now on, we use the GJMS operator $P_{2\gamma}^g$  to incorporate the ordinary  and fractional GJMS operators. % Here  $\gamma \in (0,\frac{n}{2}]$ or $\gamma=k \in \mathbb{Z}_+, 2\gamma>n$.

On the unit sphere $\Sn$, the GJMS operator in critical dimension $n$  is
$$
P_n:=P_n^{\Sn}=\begin{cases}
\displaystyle  \prod_{j=0}^{(n-2)/2}\Big(-\Delta_{\Sn}+j(n-1-j)\Big) &\qquad \mathrm{~~for~~} n \mathrm{~~even};\\
\displaystyle  \sqrt{-\Delta_{\Sn}+(\frac{n-1}{2})^2} \prod_{j=0}^{(n-3)/2}\Big(-\Delta_{\Sn}+j(n-1-j)\Big) &\qquad \mathrm{~~for~~} n \mathrm{~~odd}.
\end{cases}
$$
See Branson \cite[p.231]{Branson} and Beckner \cite{Beckner}. 
For $n \neq 2k$ and $k \in \mathbb{Z}_+$, the GJMS operator of order $2k$ is
$$
 P_{2k}:=P_{2k}^{\Sn}=\prod_{i=1}^k\left(-\Delta_{\Sn}+\big(\frac{n}{2}+i-1\big)\big(\frac{n}{2}-i\big)\right).
$$
 Especially, $P_2$ is the conformal Laplacian and $P_4$ is the Paneitz operator.
 See Fefferman-Graham \cite{Fefferman-Graham} and Gover \cite{Gover} for the factorization of GJMS operators.
 
The operators $P_n$ in critical dimension $n$ and $P_{2k}$ in dimension $n\neq 2k$ are conformally covariant. It is well-known that $P_n$ has spherical harmonics as eigenfunctions with the corresponding eigenvalues 
$$\lambda_l:=\lambda_l(P_n)=\frac{\Gamma(l+n)}{\Gamma(l)},  \qquad l \in  \N $$
and the  $Q$-curvature of the round metric is $Q_{\Sn}=(n-1)!$.

The Green function of GJMS operators on $\Sn$ can be similarly defined as follows:
\begin{equation}\label{Def:Green_fcn_critical}
P_{n} G(\cdot,Q)=\delta_Q-\frac{1}{|\Sn|}\qquad \mathrm{~~in~~critical~~dimension~~} n
\end{equation}
and
\begin{equation}\label{Def:Green_fcn_others}
P_{2k} G(\cdot,Q)=\delta_Q\qquad \mathrm{~~for~~ either~~} 2k<n \mathrm{~~or~~}\ 2k>n \mathrm{~~with~~} n \mathrm{~~odd}
\end{equation}
in the distribution sense. Equivalently, 
for all $u\in C^\infty(\Sn)$ there hold
     \begin{equation*}
     u(Q)=\frac{1}{|\Sn|}\int_{\Sn}u dV_{\Sn}+\int_{\Sn} G(\cdot, Q) P_nu dV_{\Sn}   
    \end{equation*}
    in the former case, and
    \begin{equation*}
     u(Q)=\int_{\Sn} G(\cdot, Q) P_{2k}u dV_{\Sn}  
    \end{equation*}
    in the latter case.

The case $2k>n$ and $n$ is even is exceptional. In this case, rewrite
$$P_{2k}=\prod_{i=\frac{n}{2}+1}^k\Big(-\Delta_{\Sn}-\big(\frac{n}{2}+i-1\big)\big(i-\frac{n}{2}\big)\Big)\prod_{i=1}^{\frac{n}{2}}\Big(-\Delta_{\Sn}+\big(\frac{n}{2}+i-1\big)\big(\frac{n}{2}-i\big)\Big)$$
and for each integer $i \in (\frac{n}{2},k]$, $(i-\frac{n}{2})(i+\frac{n}{2}-1)$ is an eigenvalue of $-\Delta_{\Sn}$, hence the kernel of $P_{2k}$ consists of functions other than constants, that is, $\{\mathrm{constants}\} \subsetneq \mathrm{ker}(P_{2k})$. Consequently, there \emph{can not exist} Green functions of the above two types. Of course one can define an appropriate `Green function' in this setting modulo the kernel of $P_{2k}$, see for example the proof of Hodge decomposition theorem \cite[Chapter 0]{GH}, and study it using the methods of the present work. We leave the details to interested readers.

The Green function of conformal Laplacian on $\Sn$ under stereographical projection is well known, which follows from Lemma 3.2 in \cite[Chapter 6]{SchYau}. In the initial Chinese version of \cite{SchYau}, one can also find an explicit formula there. The Green function of the Paneitz operator $P_4$ on non-critical dimensional spheres was available in Hang-Yang \cite{Hang-Yang, Hang-Yang16}. The Green function of $P_{2k}$ on odd dimensional spheres $\Sn$ once appeared in  \cite[formula (2.1)]{Hang}. For $\sigma\in (0,\frac{n}{2})$ and $\sigma\notin\mathbb{N}$, the Green function of $P_{2\sigma}$ was essentially derived by J. Qing and D. Raske using conformal covariance in \cite[Theorem 2.1]{QingRaske}. The other cases are probably known to some experts. However, to the best of the authors' knowledge, they never appear in the literature.

\medskip

The first purpose of this paper is to derive explicit representation formulae for Green functions of both local and non-local GJMS operators on spheres in a unified way. It is interesting to discover that our explicit representation formula has a particular geometric meaning, whose importance seems to be ignored in large, even for the familiar $P_2$ and $P_4$ operators.  

Before presenting our results,  we denote by $\|P-Q\|$  the \emph{extrinsic} distance between $P$ and $Q$, where $P,Q\in \Sn\subset\RR^{n+1}$.
\begin{theorem}\label{thm:GreenSn}
For $n\geq 2$ and $P,Q \in \Sn$, the following are the Green function $G(P,Q)$ of GJMS operator $P_{2\gamma}$ with respect to the round metric:
\begin{itemize}
\item[(1)] In critical dimension $n$, the Green function of $P_n$ is $G(P,Q)=-c_n\log\|P-Q\|+c$, where $$c_n=\frac{2}{(n-1)!|\Sn|}=\frac{1}{2^{n-1}\pi^{\frac{n}{2}}\Gamma(\frac{n}{2})}\qquad \mathrm{~~and~~}\quad  c \in \RR.$$  
\item[(2)] For $2k<n$, or $2k>n$ and $n$ is odd, the Green function of $P_{2k}$ is $G(P,Q)=c_{n,k}\|P-Q\|^{2k-n}$, where
$$c_{n,k}=\frac{\Gamma(\frac{n}{2}-k)}{2^{2k}\pi^{\frac{n}{2}}\Gamma(k)}.
$$
Here $\Gamma(z)$ is always understood as the unique meromorphic function on $\CC$ and coincides with $\int_0^\infty t^{z-1}e^{-t}dt$ when $z\in\RR_+$.
\item[(3)]For $\sigma\in (0,\frac{n}{2})\cup (\frac{n}{2},+\infty)$ and $\sigma\notin\frac{n}{2}+\mathbb{N}$, the Green function of $P_{2\sigma}$ is $G(P,Q)=c_{n,\sigma}\|P-Q\|^{2\sigma-n}$, where 
$$c_{n,\sigma}=\frac{\Gamma(\frac{n}{2}-\sigma)}{2^{2\sigma}\pi^{\frac{n}{2}}\Gamma(\sigma)}.$$
\end{itemize}
\end{theorem}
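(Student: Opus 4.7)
My plan is to reduce the computation on $\Sn$ to the classical Riesz potential on $\RR^n$ via the conformal covariance of $P_{2\sigma}$ under stereographic projection. Let $\pi:\Sn\setminus\{N\}\to\RR^n$ denote the stereographic projection from the north pole, for which $(\pi^{-1})^*g_{\Sn}=e^{2u}g_{\RR^n}$ with $e^{u(x)}=2/(1+|x|^2)$. Since $(\RR^n,g_{\RR^n})$ is Ricci-flat, $P_{2\sigma}^{g_{\RR^n}}$ reduces to $(-\Delta)^\sigma$, whose fundamental solution is the classical Riesz kernel $c_{n,\sigma}|x-y|^{2\sigma-n}$ with exactly the constant $c_{n,\sigma}=\Gamma(n/2-\sigma)/(2^{2\sigma}\pi^{n/2}\Gamma(\sigma))$ stated in the theorem (locally integrable for $0<\sigma<n/2$, extending to a tempered distribution by analytic continuation for other $\sigma\notin n/2+\N$).

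Given $\psi\in C^\infty(\Sn)$ in the non-critical regime, let $\varphi$ denote the unique solution of $P_{2\sigma}^{\Sn}\varphi=\psi$. Setting $\tilde\varphi=e^{\frac{n-2\sigma}{2}u}(\varphi\circ\pi^{-1})$, the intertwining relation from the introduction gives $P_{2\sigma}^{\RR^n}\tilde\varphi=e^{\frac{n+2\sigma}{2}u}(\psi\circ\pi^{-1})$. Solving on $\RR^n$ by convolution with the Riesz kernel and transporting back via $dy=e^{-nu}dV_{\Sn}$ yields a kernel of the form $e^{\frac{n-2\sigma}{2}u_P}|\pi(P)-\pi(Q)|^{2\sigma-n}e^{\frac{2\sigma-n}{2}u_Q}$. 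The classical stereographic identity
\begin{equation*}
\|P-Q\|^2=\frac{4|\pi(P)-\pi(Q)|^2}{(1+|\pi(P)|^2)(1+|\pi(Q)|^2)}=e^{u_P}e^{u_Q}|\pi(P)-\pi(Q)|^2
\end{equation*}
converts $|\pi(P)-\pi(Q)|^{2\sigma-n}$ into $e^{\frac{n-2\sigma}{2}(u_P+u_Q)}\|P-Q\|^{2\sigma-n}$, and a short accounting of exponents shows that every conformal factor cancels, leaving $G(P,Q)=c_{n,\sigma}\|P-Q\|^{2\sigma-n}$. This handles parts (2) and (3) uniformly.

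The critical case (1) with $2\sigma=n$ is handled by taking the limit $\sigma\to n/2$ in part (3): the factor $\Gamma(n/2-\sigma)$ has a simple pole, while $|x|^{2\sigma-n}-1=-2(n/2-\sigma)\log|x|+O((\sigma-n/2)^2)$, so a residue computation yields $\lim_{\sigma\to n/2}c_{n,\sigma}(\|P-Q\|^{2\sigma-n}-1)=-c_n\log\|P-Q\|$ with the precise constant $c_n=1/(2^{n-1}\pi^{n/2}\Gamma(n/2))$, matching $2/((n-1)!|\Sn|)$ through the Legendre duplication formula. The additive constant freedom in (1) reflects $\ker P_n=\{\mathrm{const}\}$ and the $-1/|\Sn|$ correction in the defining equation. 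The principal technical obstacle is making the formal manipulations fully rigorous for $\sigma>n/2$, where the Riesz kernel is not locally integrable; this requires either a distributional regularization (finite-part or meromorphic continuation in $\sigma$) or, as an independent spectral check consistent with the paper's title, an expansion of $G$ against spherical harmonics using the addition formula and the eigenvalues $\lambda_l(P_{2\sigma})=\Gamma(l+n/2+\sigma)/\Gamma(l+n/2-\sigma)$, which must then be identified with the Gegenbauer expansion of $c_{n,\sigma}(2-2\cos\theta)^{\sigma-n/2}$ via the generating function $(1-2xt+t^2)^{-\nu}=\sum_l C_l^\nu(x)t^l$.
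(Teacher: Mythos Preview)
Your approach via conformal covariance and stereographic projection is genuinely different from the paper's main proof. The paper works purely spectrally: it writes $G(P,Q)=\sum_{k}\lambda_k^{-1}\sum_m Y_m^{(k)}(P)\overline{Y_m^{(k)}(Q)}$, collapses the inner sum via the Funk--Hecke identity to a series in Gegenbauer polynomials $P_k^{(n-1)/2}(-P\cdot Q)$, and then identifies that series with $c_{n,\sigma}(1+x)^{\sigma-n/2}$ (resp.\ $-\tfrac{c_n}{2}\log(1+x)+c$) by computing the $L^2([-1,1],(1-x^2)^{(n-2)/2}dx)$ inner product of each side against every $P_k^{(n-1)/2}$. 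This handles all $\sigma\notin\tfrac{n}{2}+\N$ uniformly, with no analytic continuation or regularization. Your route is essentially the Qing--Raske argument the paper cites in its introduction; it is slick for $0<\sigma<n/2$, but---as you correctly flag---does not stand on its own for $\sigma>n/2$, and the ``independent spectral check'' you propose as a remedy \emph{is} the paper's proof. Likewise your limiting argument $\sigma\to n/2$ for part~(1) is formally correct (including the constant), but justifying that the limit of the Green functions solves the critical equation again requires the spectral description of $P_{2\sigma}$, so it does not really bypass the Gegenbauer computation. In short: your method buys a faster derivation in the subcritical range; the paper's method buys a single argument covering all three cases without regularization.

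One small bookkeeping slip: the intermediate kernel should be symmetric in $P,Q$, namely $e^{\frac{2\sigma-n}{2}u_P}\,|\pi(P)-\pi(Q)|^{2\sigma-n}\,e^{\frac{2\sigma-n}{2}u_Q}$; with your stated $e^{\frac{n-2\sigma}{2}u_P}$ the conformal factors do not actually cancel. The final formula is unaffected.
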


\begin{remark}
    \begin{enumerate}[(i)]
        \item Though case (2) of Theorem \ref{thm:GreenSn} is contained in case (3), we decide to write it separately in order to emphasize the ordinary GJMS operators. 
        \item These results should be compared with the fundamental solution of $(-\Delta_0)^k$ on $\RR^n$ (see, for example, \cite[p.44]{John}):
        $$\Gamma(x,y)=\begin{cases}
         \displaystyle c_{n,k}|x-y|^{2k-n} &\qquad \mathrm{~~for~~} 2k<n, \mathrm{ ~~or~~ } n \mathrm{~~ odd} ;\\ 
         \displaystyle \frac{(-1)^{k-\frac{n}{2}-1}|x-y|^{2k-n}\log|x-y|}{2^{2k-1}\pi^{\frac{n}{2}}\Gamma(k)(k-\frac{n}{2})!} &\qquad \mathrm{~~for~~} 2k\geq n \mathrm{~~and~~}  n \mathrm{~~even}.
        \end{cases}$$
        \item By conformal covariance, it is clear that for a round sphere with extrinsic radius $r>0$, the Green functions have the same form as that of the unit sphere.
        \item For a smooth finite quotient of $\Sn$, like lens space, the Green function of GJMS operators is related to the Green function of $\Sn$ by $G(\cdot,Q)\circ \imath=\sum_{\tilde Q\in \imath^{-1}(Q)}G_{\Sn}(\cdot,\tilde Q)$, where $\iota$ is the quotient map, as can be easily verified. See for example \cite[Lemma 2.4]{EscSch}.
    \end{enumerate}
\end{remark}

In all the previous work on Green functions for GJMS operators on spheres, the authors use the stereographic projection and conformal covariance of GJMS operators. For ordinary GJMS operators, it is easy to see that the pullback of the fundamental solutions of $(-\Delta_{\RR^n})^k$ satisfies the differential equations for Green functions. And one can check that it is indeed the Green function of the GJMS operator on the sphere by integrating against an arbitrary smooth test function. Readers are referred to Section\ref{sec:ordinary} for details.    

Since fractional GJMS operators are nonlocal,  one needs to be careful when reducing the problem to $\RR^n$ via stereographic projection. The previous method indeed works, see for example \cite{Branson95, Chang-Yang, QingRaske, Xu} etc. Instead, we adopt a new and unified approach, by using the follow fact from spectral theory: if $P$ is a self-adjoint elliptic (pseudo-differential) operator on a compact manifold $M$ with eigenvalues $\lambda_i\to\infty$ and corresponding $L^2$-orthonormal eigenfunctions $\varphi_i$, and if $\mathrm{Ker} P=\{0\}\ \text or\ \RR$, then the Green function of $P$ is
\begin{equation}\label{eqn:Green_spectrum}
 G(x,y)=\sum_{i:\lambda_i\neq 0}\frac{1}{\lambda_i}\varphi_i(x)\varphi_i(y).
\end{equation}
In our problem, the eigenfunctions of GJMS operators on spheres are precisely spherical harmonics. So the problem is to compute the series \eqref{eqn:Green_spectrum} consisting of spherical harmonics. Note that similar trick had been used by Beckner \cite{Beckner}. \\

The main tool for our evaluation of \eqref{eqn:Green_spectrum} is the Gegenbauer polynomial, with which, we can also give a rigorous proof of an \emph{astonishing} formula of GJMS operators acting on axially symmetric functions on spheres.
\begin{proposition}\label{prop:full_GHX}
For $n$ even, suppose a smooth function $u$ on $\Sn$ depends only on $x:=x_{n+1}$. Then there holds
\begin{equation}\label{full_formula:GHX}
P_n u(x)=(-1)^{\frac{n}{2}}[(1-x^2)^{\frac{n}{2}}u']^{(n-1)}.
\end{equation}
\end{proposition}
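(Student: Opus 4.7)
The plan is to verify \eqref{full_formula:GHX} on the basis of axially symmetric spherical harmonics and extend by density. The axially symmetric spherical harmonic on $\Sn$ of degree $l$ is (up to normalization) the Gegenbauer polynomial $C_l^{(n-1)/2}(x)$, and every axially symmetric $u \in C^\infty(\Sn)$ has a Gegenbauer expansion converging in $C^\infty$. Since $P_n$ acts with eigenvalue $\lambda_l = \Gamma(l+n)/\Gamma(l)$ and since $Tu(x) := (-1)^{n/2}[(1-x^2)^{n/2} u']^{(n-1)}$ is a linear differential operator of order $n$ (hence continuous on $C^\infty$), it suffices to show
\begin{equation*}
T(C_l^{(n-1)/2}) = \lambda_l\, C_l^{(n-1)/2}, \qquad l\geq 1.
\end{equation*}

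To compute $T(C_l^{(n-1)/2})$, I would apply the Gegenbauer derivative identity $\frac{d}{dx}C_l^\alpha = 2\alpha\, C_{l-1}^{\alpha+1}$ to get $(C_l^{(n-1)/2})' = (n-1)\,C_{l-1}^{(n+1)/2}$, and then invoke the Rodrigues formula
\begin{equation*}
C_l^\alpha(x) = \frac{(-1)^l\,\Gamma(\alpha+\tfrac12)\,\Gamma(l+2\alpha)}{2^l\, l!\,\Gamma(2\alpha)\,\Gamma(l+\alpha+\tfrac12)}\,(1-x^2)^{\frac12-\alpha}\,\frac{d^l}{dx^l}\!\left[(1-x^2)^{l+\alpha-\frac12}\right]
\end{equation*}
at $\alpha = (n+1)/2$ to rewrite $(1-x^2)^{n/2}\, C_{l-1}^{(n+1)/2}(x)$ as a constant multiple of $\frac{d^{l-1}}{dx^{l-1}}[(1-x^2)^M]$ with $M := l + (n-2)/2$. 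This yields
\begin{equation*}
T(C_l^{(n-1)/2})(x) = K_l\cdot \frac{d^{n+l-2}}{dx^{n+l-2}}\!\left[(1-x^2)^M\right]
\end{equation*}
for an explicit gamma-function ratio $K_l$.

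The key step is to recognize $y(x) := \frac{d^{n+l-2}}{dx^{n+l-2}}[(1-x^2)^M]$ as a polynomial of degree $l$ satisfying the Gegenbauer equation
\begin{equation*}
(1-x^2)\, y'' - n x\, y' + l(l+n-1)\, y = 0,
\end{equation*}
which forces $y = c_l\, C_l^{(n-1)/2}(x)$ for some constant $c_l$. To derive this ODE, I would start from the first-order relation $(1-x^2)\, P'(x) + 2Mx\, P(x) = 0$ for $P(x) := (1-x^2)^M$ and apply $\frac{d^m}{dx^m}$ via the Leibniz rule; since the Taylor expansions of $(1-x^2)$ and $x$ terminate after three and two terms respectively, this reduces to the three-term recurrence
\begin{equation*}
(1-x^2)\, P^{(m+1)} + 2(M-m)\, x\, P^{(m)} + m(2M-m+1)\, P^{(m-1)} = 0.
\end{equation*}
Specializing $m = n+l-1$ with $M = l+(n-2)/2$ gives $2(M-m) = -n$ and $m(2M-m+1) = l(l+n-1)$, which is exactly the Gegenbauer equation above for $y = P^{(n+l-2)}$.

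Finally, I would determine $c_l$ by comparing the leading coefficients of $y$ and $C_l^{(n-1)/2}(x)$. Legendre's duplication formula $\Gamma(2z) = \pi^{-1/2}\, 2^{2z-1}\,\Gamma(z)\,\Gamma(z+\tfrac12)$ collapses the resulting product $K_l\cdot c_l$ down to $\lambda_l = \Gamma(l+n)/\Gamma(l)$, while the accumulated signs $(-1)^{n/2+l-1+M}$ simplify to $+1$ via the parity of $n$. The main obstacle I anticipate is this gamma-function bookkeeping: the Rodrigues expression for $C_l^{(n-1)/2}$ uses $l$ derivatives of $(1-x^2)^M$, whereas our expression uses $n+l-2$ derivatives, so the normalizing constants arise through different products of gamma values that must reconcile through the duplication formula. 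Once the eigenvalue match is verified, density of Gegenbauer expansions in axially symmetric $C^\infty(\Sn)$ completes the proof.
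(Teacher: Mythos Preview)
Your proof is correct and takes a genuinely different route from the paper's.

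The paper adopts what it calls the ``dual viewpoint'': rather than compute $T(u_k)$ directly, it verifies the two inner-product identities $\int_{-1}^1 \mathcal{Q}u_k\,u_l\,d\mu=0$ for $k\neq l$ and $\int_{-1}^1 \mathcal{Q}u_k\,u_k\,d\mu=\bar\lambda_k\|u_k\|^2$, both by repeated integration by parts against the Rodrigues representation of the (unnormalized) $u_k$. In particular, the paper explicitly remarks that ``direct computation seems to be too complicated'' and is motivated to avoid it; working with unnormalized polynomials and ratios of integrals lets the gamma factors cancel automatically, so no duplication-formula bookkeeping is needed at all.

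Your approach, by contrast, \emph{is} a direct computation, but made tractable by a nice structural observation the paper does not use: after the derivative identity and Rodrigues formula reduce $T(C_l^{(n-1)/2})$ to a constant times $P^{(n+l-2)}$ with $P=(1-x^2)^M$, you identify this $(n+l-2)$-th derivative as a polynomial solution of the Gegenbauer ODE via the Leibniz-derived three-term recurrence, forcing proportionality to $C_l^{(n-1)/2}$. This is elegant and arguably reveals more of why the formula holds. The price you pay is exactly the gamma-function and sign bookkeeping you flag at the end; the paper's integration-by-parts argument sidesteps that entirely and yields the eigenvalue $(-1)^{n/2}\Gamma(k+n)/\Gamma(k)$ in one short line. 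Both arguments then conclude by the same density of Gegenbauer expansions in axially symmetric $C^\infty(\Sn)$.
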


\begin{remark}
The above formula \eqref{full_formula:GHX}  in lower even dimensions was first proved by Gui-Hu-Xie \cite{Gui-Hu-Xie} via straightforward computations; however, a rigorous proof for all even $n$ does not appear before our work.
\end{remark}

As an application of this formula, we give a short and direct proof of the following uniqueness theorem for a higher order mean field equation on spheres, which first appeared in \cite[Theorem 1.1]{Gui-Hu-Xie} for all dimensions $n$.
\begin{theorem}\label{thm:meanfield}
   For $n$ even, suppose $u$ is an axially symmetric smooth solution of 
   \begin{equation}\label{eqn:MFE}
       \frac{1}{n+1} P_n u=(n-1)!(e^{nu}-1) \qquad \mathrm{~~on~~}\quad \Sp^n.
   \end{equation}
   Then $u=0$ on $\Sn$.
\end{theorem}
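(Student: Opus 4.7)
The strategy is to invoke Proposition \ref{prop:full_GHX} to reduce \eqref{eqn:MFE} to an ordinary differential equation in $x=x_{n+1}\in[-1,1]$, and then close the argument by a sharp spectral comparison on $\mathbb{S}^n$.

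Writing $u=u(x)$, Proposition \ref{prop:full_GHX} immediately converts \eqref{eqn:MFE} into the $n$-th order ODE
$$(-1)^{n/2}\bigl[(1-x^2)^{n/2}u'(x)\bigr]^{(n-1)} = (n+1)(n-1)!\bigl(e^{nu(x)}-1\bigr), \qquad x\in (-1,1),$$
together with the smoothness requirement at $x=\pm 1$, which forces $(1-x^2)^{n/2}u'(x)$ to vanish to order at least $n/2$ at both endpoints. Testing \eqref{eqn:MFE} against the constant function on $\mathbb{S}^n$ yields the mass constraint $\int_{\mathbb{S}^n}(e^{nu}-1)\,dV=0$, so by Jensen's inequality the mean $\bar u:=\frac{1}{|\mathbb{S}^n|}\int_{\mathbb{S}^n}u\,dV\leq 0$.

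Next I would pair the equation with $u$ itself in $L^2(\mathbb{S}^n)$ and expand $u=\sum_{\ell\geq 0}u_\ell Z_\ell$ in axially symmetric spherical harmonics (normalized Gegenbauer polynomials), with eigenvalues $\lambda_\ell=\Gamma(\ell+n)/\Gamma(\ell)$. Using the mass constraint to subtract the mean, this gives
$$\sum_{\ell\geq 1}\lambda_\ell u_\ell^2=\int_{\mathbb{S}^n}u\,P_n u\,dV=(n+1)(n-1)!\int_{\mathbb{S}^n}(u-\bar u)\bigl(e^{nu}-e^{n\bar u}\bigr)\,dV\geq 0,$$
by strict monotonicity of $t\mapsto e^{nt}$, with equality iff $u\equiv\bar u$.

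The crux, and the main obstacle, is a sharp spectral comparison: one has $\lambda_\ell\geq\lambda_2=(n+1)!$ for $\ell\geq 2$, while the linearized coefficient on the right is $n(n+1)(n-1)!=(n+1)!=\lambda_2$, so the naive bound $\sum_{\ell\geq 2}\lambda_\ell u_\ell^2\geq\lambda_2\sum_{\ell\geq 2}u_\ell^2$ exactly balances the leading part of the right-hand side, while the $\ell=1$ mode is at a further disadvantage since $\lambda_1=n!<(n+1)!$. I would handle the $\ell\geq 3$ modes by the strict inequality $\lambda_\ell>\lambda_2$; eliminate the $\ell=1$ mode via the Kazdan--Warner-type identity obtained by pairing the equation with $x_{n+1}$ (using $P_nx_{n+1}=n!\,x_{n+1}$), which gives $n!\int u\,x\,dV=(n+1)(n-1)!\int x\,e^{nu}\,dV$; and close the degenerate $\ell=2$ direction by feeding the sub-leading Taylor expansion $e^{nu}-e^{n\bar u}=ne^{n\bar u}(u-\bar u)+\tfrac{n^2}{2}e^{n\bar u}(u-\bar u)^2+\cdots$ into the identity above, together with the constraint $\bar u\leq 0$ and the specific axially symmetric structure, to produce a strict excess unless $u\equiv\bar u$. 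Once $u_\ell=0$ for all $\ell\geq 1$, one has $u\equiv\bar u$, and the mass constraint $\int_{\mathbb{S}^n}e^{nu}\,dV=|\mathbb{S}^n|$ then forces $u\equiv 0$.
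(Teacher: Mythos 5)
Your reduction to an ODE via Proposition \ref{prop:full_GHX}, the mass constraint $\int_{\Sn}(e^{nu}-1)\,dV=0$, and the identity $\int_{\Sn}uP_nu\,dV=(n+1)(n-1)!\int_{\Sn}(u-\bar u)(e^{nu}-e^{n\bar u})\,dV$ are all correct, but they only establish that a nonnegative quantity equals a nonnegative quantity, and the step you yourself call ``the crux'' is never carried out. That step has a genuine gap that your sketch cannot close. The $\ell=2$ mode is \emph{exactly} critical ($\lambda_2=(n+1)!$ equals the linearized coefficient), so there is no spectral room there; worse, the comparison you propose goes the wrong way, since for $t=u-\bar u>0$ one has $t(e^{nt}-1)>nt^2$, so the higher-order Taylor terms \emph{increase} the right-hand side rather than producing the ``strict excess'' you need. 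The Kazdan--Warner identity $\frac{n}{n+1}\int ux\,dV=\int xe^{nu}\,dV$ does not eliminate the $\ell=1$ mode either: it relates $u_1$ to the first moment of $e^{nu}$, which depends nonlinearly on all modes of $u$, so nothing is ``eliminated.'' In short, the spectral-comparison strategy founders on precisely the criticality that makes this theorem nontrivial, and asserting that the axially symmetric structure ``produces a strict excess'' is not a proof.

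The paper's argument is structurally different and avoids this impasse entirely. It pairs the equation with the degree-two spherical harmonic $F_2(x)=1-(n+1)x^2$, computes $\alpha\int P_nu\,F_2\,dV$ in two ways (once using $P_nF_2=(n+1)!F_2$, once by unwinding $\Delta_{\Sn}F_2$ on the right-hand side to bring out the term $\int xU\,P_nu\,dV$ with $U=\langle\nabla x,\nabla u\rangle$), and then uses the explicit formula \eqref{full_formula:GHX} to integrate $\int xF\,F^{(n-1)}\,dx$ by parts $n/2$ times, where $F=(1-x^2)^{n-1}u'$. The special coefficient $\frac{1}{n+1}$ makes all the indefinite terms cancel, leaving $\frac{n-1}{2(n+1)}|\Sp^{n-1}|\int_{-1}^1|F^{(\frac{n}{2}-1)}|^2\,dx=0$. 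Hence $(1-x^2)^{n-1}u'$ is a polynomial of degree at most $\frac{n}{2}-2$ with a zero of multiplicity $n-1$ at $x=1$, forcing $u'\equiv0$ and then $u\equiv0$ from the equation. If you want to complete your own route, you would need a genuinely new mechanism to handle the critical $\ell=2$ direction; as written, the proposal does not prove the theorem.
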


Since the Green functions of GJMS operators on $\Sn$ have such explicit geometric meanings, it is natural to ask whether such properties characterize the round sphere among all closed hypersurfaces in $\RR^{n+1}$. We are able to justify this for ordinary GJMS operators of order up to $4$.

\begin{theorem}\label{thm:rigidity}
  Let $(M,g)$ be a closed embedded hypersurface in $\RR^{n+1}$ with induced metric $g$. Assume one of the following conditions holds:
  \begin{itemize}
      \item[(1)] $n=2$, and the Green function of  $-\Delta_g$ has the form $G(P,Q)=-\frac{1}{2\pi}\log\|P-Q\|+C$ for any  $P,Q\in M$ and some $C\in \RR$;
      \item[(2)] $n\geq 3$, and the Green function for the conformal Laplacian $P_2^g$ has the form $G(P,Q)=c_{n,1}\|P-Q\|^{2-n}$ for any $P,Q\in M$;
      \item[(3)] $n\geq 3$ and $n\neq 4$, and the Green function for the Paneitz operator $P_4^g$ has the form $G(P,Q)=c_{n,2}\|P-Q\|^{4-n}$ for any $P,Q\in M$;
      \item[(4)] $n=4$ and the Green function for the Paneitz operator $P_4^g$ has the form $G(P,Q)=-\frac{1}{8\pi^2}\log\|P-Q\|+C$ for any $P,Q\in M$ and some $C \in \RR$.
  \end{itemize}
  Then $M$ is a round sphere. 
\end{theorem}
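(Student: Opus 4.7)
\emph{Plan.} The unifying idea is to reduce each case to a pointwise constraint at an arbitrary $Q\in M$ and use it to show $M$ is totally umbilic at $Q$; a closed embedded totally umbilic hypersurface of $\RR^{n+1}$ is classically a round sphere, which yields the theorem. The starting point in every case is the comparison formula
\begin{equation*}
\Delta_g(\phi|_M) = \Delta_{\RR^{n+1}}\phi - \Hess_{\RR^{n+1}}\phi(\nu,\nu) + nH\,\partial_\nu\phi
\end{equation*}
for $\phi\in C^\infty(\RR^{n+1})$, together with the Taylor expansions along a unit-speed $g$-geodesic $\gamma_v$ issuing from $Q$ with $\dot\gamma_v(0)=v$,
\begin{equation*}
\|\gamma_v(s)-Q\|^2 = s^2 - \tfrac{1}{12}II_Q(v,v)^2 s^4 + O(s^5),\qquad \langle\nu_{\gamma_v(s)},\gamma_v(s)-Q\rangle = -\tfrac{1}{2}II_Q(v,v)s^2 + O(s^3),
\end{equation*}
which follow from $\ddot\gamma_v = II(\dot\gamma_v,\dot\gamma_v)\nu$.

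\emph{Cases (1) and (2).} Applying the formula above to $\phi = \|P-Q\|^{2-n}$ (or $\log\|P-Q\|$ for $n=2$) gives, with $r=\|P-Q\|$ and $\sigma=\langle\nu_P,P-Q\rangle$,
\begin{equation*}
P_2^g\bigl(c_{n,1}r^{2-n}\bigr) = c_{n,1}(n-2)r^{-n}\Bigl\{n\sigma(r^{-2}\sigma+H) + \tfrac{R_g}{4(n-1)}r^2\Bigr\},
\end{equation*}
which must vanish identically on $M\setminus\{Q\}$ (with the right-hand side replaced by a global constant $-V^{-1}$ in the $n=2$ log case). Substituting the Taylor expansions and letting $P\to Q$ reduces the identity to the quadratic
\begin{equation*}
\lambda^2 - 2\lambda H(Q) + \frac{R_g(Q)}{n(n-1)} = 0\qquad \forall\,\lambda = II_Q(v,v),\ v\in T_QM,\ |v|=1
\end{equation*}
(with a constant shift when $n=2$). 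A quadratic has at most two roots, but $v\mapsto II_Q(v,v)$ is continuous on the connected sphere $S^{n-1}\subset T_QM$, so it must be constant in $v$; hence $II_Q = H(Q)g_Q$, and $M$ is umbilic at $Q$.

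\emph{Case (3).} The same strategy applies to
\begin{equation*}
P_4^g u = \Delta_g^2 u - \div_g(T\nabla u) + \tfrac{n-4}{2}Q_4^g u,
\end{equation*}
where $T$ is the standard combination of $R_g g$ and $\Ric_g$. Iterating the comparison formula expresses $P_4^g(\|P-Q\|^{4-n})$ as an explicit rational function of $r,\sigma,H,II,\Ric_g,R_g,Q_4^g$; its leading part as $P\to Q$ is of order $r^{2-n}$, and after substitution of the expansions above, the vanishing of this leading coefficient for all unit $v\in T_QM$ becomes a polynomial identity in $II_Q(v,v)$ and $(II_Q^2)(v,v)$. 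On hypersurfaces in $\RR^{n+1}$, Gauss expresses $\Ric_g$ purely in terms of $II$ and $H$, so matching coefficients of powers of $v$ in this identity forces all principal curvatures at $Q$ to coincide with $H(Q)$, giving umbilicity.

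\emph{Case (4) and main obstacle.} The critical dimension $n=4$ is the most delicate: $P_4^g$ annihilates constants, the Green function carries a logarithm, and $P_4^gG = \delta_Q - V^{-1}$ contains a genuine global term. The plan is to combine the local expansion of $P_4^g\bigl(-\tfrac{1}{8\pi^2}\log r\bigr)$ as in case (3) with a global rigidity argument: by conformal covariance of $P_4^g$, the metric $\hat g = e^{2(G-C)/c}g$ on $M\setminus\{Q\}$ is asymptotically flat with vanishing $Q$-curvature, and a four-dimensional positive mass theorem in the spirit of Gursky--Malchiodi and Hang--Yang forces $(M,g)$ to be conformally a round sphere; the umbilic condition recovered from the local expansion then upgrades this to an isometric round embedding in $\RR^5$. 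The main obstacles are the sheer bookkeeping of curvature terms in case (3), and in case (4) the verification of the positive-mass hypotheses purely from the extrinsic data and the precise matching of the logarithmic singularity to the global constant $-V^{-1}$.
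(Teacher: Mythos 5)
Your treatment of cases (1) and (2) is essentially the paper's argument in different clothing: the paper computes $\Delta_g\rho$, $|\nabla^g\rho|_g^2$ and $\mathrm{Hess}_g\rho$ directly in graph coordinates and passes to the limit along curves $\gamma(t)=(tv,f(tv))$, which is equivalent to your tangential-Laplacian comparison formula plus the Taylor expansions of $\rho$ and of the support function. One genuine (small) improvement: to extract umbilicity from the limiting identity, the paper evaluates at principal directions, sums over $\alpha$, and uses $(\sum\kappa_\alpha)^2=n\sum\kappa_\alpha^2$; your observation that $v\mapsto II_Q(v,v)$ is a continuous map from the connected sphere $S^{n-1}\subset T_QM$ into the (at most two-point) root set of a monic quadratic, hence constant, is cleaner and avoids the trace step entirely in case (2). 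Be aware, however, that this shortcut does \emph{not} transfer to case (3): there the limiting identity is
\begin{equation*}
2n\,II(v,v)^2-2(II^2)(v,v)-4H(Q)II(v,v)+\frac{H(Q)^2}{n-2}+\frac{n-6}{(n-1)(n-2)}R_g(Q)+\frac{4}{n-2}\Ric(v,v)=0,
\end{equation*}
which is not a polynomial in the single scalar $II_Q(v,v)$ because of the $(II^2)(v,v)$ and $\Ric(v,v)$ terms. Your phrase ``matching coefficients of powers of $v$'' glosses over the step that actually closes the argument: one must trace the identity over an orthonormal basis and invoke the Gauss equation $R_g=H^2-|II|^2$ to land on $H(Q)^2=n|II(Q)|^2$, which forces umbilicity by Cauchy--Schwarz. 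The strategy is right and agrees with the paper, but as written case (3) is a sketch whose decisive step is not supplied.

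The real gap is case (4). You identify it as the main obstacle and propose a detour through conformal covariance, an asymptotically flat blow-up metric, and a four-dimensional $Q$-curvature positive mass theorem whose hypotheses you concede you have not verified. None of this is needed, and the proposal as it stands leaves the case open. Away from $Q$ the Green function equation in the critical dimension reads $P_4^g\bigl(-\tfrac{1}{8\pi^2}\log\rho^{1/2}\bigr)=-V^{-1}$, a \emph{constant}; multiplying by $\rho$ and letting $P\to Q$ along $\gamma(t)$ kills the right-hand side, so $\lim_{t\to 0}\rho\,P_4^g(\log\rho)(\gamma(t))=0$ and the purely local expansion argument of case (3) goes through verbatim (this is exactly what the paper does, arriving at $H(Q)^2=4|II(Q)|^2$ after tracing). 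In other words, the logarithm and the global term $-V^{-1}$ cause no difficulty at all, and the umbilicity-at-every-point conclusion you already plan to extract ``from the local expansion'' is by itself sufficient to finish via the classical rigidity of totally umbilic closed hypersurfaces -- making the positive-mass component of your plan both unverified and superfluous. (For a genuinely global strong rigidity statement, where the Green function is only prescribed at a single pole $Q$, the paper does invoke the Positive Mass Theorem, but that is Theorem \ref{thm:rigidity_PMT}, not the theorem at hand.)
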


At this point, we would like to raise a stronger conjecture concerning the Green function rigidity problem:
\begin{conjecture}[Green function rigidity]
    Let $M^n\subset\RR^{n+1}$ be a closed hypersurface with induced Riemannian metric $g$. Suppose for some $k\in\mathbb{Z}_+$ the Green function $G$ of the GJMS operator $P_{2k}^g$ exists. Assume one of the following conditions holds: 
    \begin{enumerate}
        \item[(1)]$2k=n$ and  {\bf for some}  $Q\in M$, $G(\cdot, Q)$ is of the form $-c_n\log\|\cdot-Q\|+c$;
        \item[(2)]$2k<n$ or  $2k>n$ when $n$ is odd, and {\bf for some}  $Q\in M$, $G(\cdot, Q)$ is of the form $c_{n,k}\|\cdot-Q\|^{2k-n}$.
    \end{enumerate}
    Then $(M,g)$ is a round sphere.
\end{conjecture}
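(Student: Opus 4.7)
The plan is to derive, in each case, a pointwise identity on $M\setminus\{Q\}$ forced by the assumed explicit form of the Green function, and then to extract local geometric information by letting the pole $Q$ approach a fixed point $P$ along an arbitrary unit tangent direction $e\in T_PM$. I expect this to force $M$ to be totally umbilical, whereupon the classical rigidity theorem for closed totally umbilic hypersurfaces in $\mathbb{R}^{n+1}$ concludes that $M$ is a round sphere.

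For case~(2), set $r(P):=\|P-Q\|$ and apply the hypersurface formula
\[
\Delta_g f|_M=(\Delta_{\mathbb{R}^{n+1}}f)|_M+H\langle\nabla f,\nu\rangle-\Hess(f)(\nu,\nu)
\]
to $f=r^{2-n}$. Substituting into $P_2^g G(\cdot,Q)=0$ on $M\setminus\{Q\}$ and multiplying by $r^n$ yields a pointwise identity of the form
\[
H(P)\langle P-Q,\nu(P)\rangle+\frac{n\langle P-Q,\nu(P)\rangle^2}{\|P-Q\|^2}+\frac{R_g(P)}{4(n-1)}\|P-Q\|^2=0.
\]
Using the standard second-order embedding expansion of $M$ at $P$, with $d:=d_g(P,Q)$, one has $\|P-Q\|^2=d^2+O(d^4)$ and $\langle P-Q,\nu(P)\rangle=\alpha d^2\,II(e,e)+O(d^3)$ for a convention-dependent nonzero constant $\alpha$. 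Taking $Q\to P$ and extracting the leading $d^2$ coefficient produces a quadratic equation in $II(e,e)$ whose coefficients depend only on $H(P)$ and $R_g(P)$, which must hold for every unit $e\in T_PM$. Since $II(e,e)$ varies continuously over the unit tangent sphere and takes values in the full interval between the smallest and largest principal curvatures, yet is constrained to an at-most two-element solution set, that interval must collapse to a single point. Hence $M$ is umbilic at every $P$, so $M$ is a round sphere. Case~(1) is completely analogous with $f=-\log r$, producing a quadratic identity of the same shape.

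Cases~(3) and (4) follow the same template applied to the Paneitz operator $P_4^g$, whose principal part is $\Delta_g^2$ and which carries an additional divergence term involving $\Ric_g$ and $R_g$. Here one would apply the hypersurface Laplacian formula \emph{twice} to $f=r^{4-n}$ (resp.\ $r^{4-n}\log r$ when $n=4$), combine the result with the lower-order Paneitz contributions, and repackage intrinsic curvature quantities via Gauss and Codazzi (which on a hypersurface in $\mathbb{R}^{n+1}$ express $R_g$ and $\Ric_g$ purely in $H$ and $II$). The pointwise identity on $M\setminus\{Q\}$ is again expanded as $Q\to P$ to produce a polynomial equation in $II(e,e)$ valid for every unit direction $e$, forcing umbilicity.

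The main obstacle lies in cases~(3) and~(4): applying the hypersurface Laplacian formula twice to $r^{4-n}$ generates a large number of terms involving $\nabla H$, $\nabla II$, $\langle P-Q,\nu\rangle$ and its higher-order contributions, which must be carefully matched against the Ricci- and scalar-divergence parts of $P_4^g$. Choosing the correct asymptotic order in the expansion $Q\to P$ is also delicate---too low an order only reproduces the umbilicity already implied by the conformal Laplacian argument, while too high an order buries the decisive constraint beneath higher derivatives of curvature. A careful use of Gauss--Codazzi together with the umbilicity expected to emerge at leading order should isolate the relevant coefficient and finish the argument.
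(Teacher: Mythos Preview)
There is a fundamental gap. The conjecture assumes the Green function has the special form for \emph{one fixed} pole $Q\in M$; the point $Q$ is given and cannot be varied. Your key step ``taking $Q\to P$ along an arbitrary unit direction $e\in T_PM$'' is therefore illegitimate: the only limit available is $P\to Q$ (since $P$ is the free variable in the equation $P_2^gG(\cdot,Q)=0$), and that limit yields the quadratic constraint on $II(e,e)$ only at the single point $Q$. Your continuity argument then shows $Q$ is umbilical---which is exactly the content of the paper's Lemma~\ref{lem:umbilical}---but says nothing about any other point of $M$. Umbilicity at one point is far from enough to force $M$ to be a round sphere.

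In fact what you have written is essentially the paper's proof of Theorem~\ref{thm:rigidity}, the \emph{weaker} statement in which the Green function has the special form for \emph{every} $Q$ (so that one may indeed let $Q\to P$ for each $P$ and deduce umbilicity everywhere). The statement you were asked about is the strong ``for some $Q$'' version, which the paper explicitly leaves as an open \textbf{conjecture}. The only case the paper settles is $k=1$ with $n\in\{3,4,5\}$ (Theorem~\ref{thm:rigidity_PMT}), and the method there is entirely different from yours: after obtaining umbilicity at $Q$ as above, one shows that $(M\setminus\{Q\},\|\cdot-Q\|^{-4}g)$ is asymptotically flat, scalar-flat, with vanishing ADM mass, and then invokes the Positive Mass Theorem to conclude that it is isometric to $\RR^n$; a further argument using the Hartman--Nirenberg theorem pins down the embedding. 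No purely local expansion argument of the type you propose is known to close the gap from ``umbilical at $Q$'' to ``round sphere''.
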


With the help of the Positive Mass Theorem, we confirm the conjecture for the conformal Laplacian in low dimensions:

\begin{theorem}\label{thm:rigidity_PMT}
  Let $(M,g)$ be a closed embedded hypersurface in $\RR^{n+1}$ where $n=3, 4$ or $5$, with induced metric $g$. Assume that there is a point $Q\in M$ such that the Green function for conformal Laplacian $P_2^g$ satisfies $G(P,Q)=c_{n,1}\|P-Q\|^{2-n}$ for any $P\in M$. Then $M$ is a round sphere. 
\end{theorem}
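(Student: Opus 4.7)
The plan is to combine a direct PDE expansion of $P_2^g G = 0$ at $Q$ with the Positive Mass Theorem.

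\textbf{Umbilicity at $Q$.} Using the submanifold Laplacian formula
\begin{equation*}
\Delta_g \bar u|_M = \Delta_{\RR^{n+1}} \bar u - \Hess_{\RR^{n+1}} \bar u(\nu,\nu) + H\,\partial_\nu \bar u,
\end{equation*}
applied with $\bar u(P) = \|P-Q\|^{2-n}$ (convention $\mathrm{II}(X,Y) = \langle D_X Y,\nu\rangle$ and $H = \mathrm{tr}\,\mathrm{II}$), I would derive
\begin{equation*}
\Delta_g \|P-Q\|^{2-n} = (2-n)\|P-Q\|^{-n}\Bigl(\tfrac{nt^2}{\|P-Q\|^2} + Ht\Bigr),
\end{equation*}
where $t := (P-Q)\cdot\nu(P)$. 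Plugging this into $P_2^g(c_{n,1}\|P-Q\|^{2-n})=0$ on $M\setminus\{Q\}$ and using $R_g = H^2 - |A|^2$ for a hypersurface in $\RR^{n+1}$ produces the pointwise identity
\begin{equation*}
(H^2-|A|^2)\|P-Q\|^4 + 4(n-1)Ht\,\|P-Q\|^2 + 4n(n-1)\,t^2 = 0 \quad \text{on } M\setminus\{Q\}.
\end{equation*}
Expanding at $Q$ in intrinsic normal coordinates $y\in T_QM$ with $\|P-Q\|^2 = |y|^2 + O(|y|^4)$ and $t = -\tfrac{1}{2}\mathrm{II}_Q(y,y) + O(|y|^3)$, the leading $|y|^4$-order of the identity reduces to
\begin{equation*}
n(n-1)\sigma^2 - 2(n-1)H_Q\,\sigma + (H_Q^2 - |A_Q|^2) = 0, \qquad \sigma = \mathrm{II}_Q(v,v),\ |v|=1.
\end{equation*}
As $\sigma$ ranges over the whole interval of principal curvatures of $M$ at $Q$, while a nontrivial quadratic has at most two roots, all principal curvatures of $M$ at $Q$ must coincide, i.e., $Q$ is an umbilic point.

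\textbf{Inversion and the Positive Mass Theorem.} Set $\hat g := G(\cdot,Q)^{4/(n-2)}g$ on $M' := M\setminus\{Q\}$; conformal covariance gives $R_{\hat g}\equiv 0$. The Euclidean inversion $I_Q(P) := Q + (P-Q)/\|P-Q\|^2$, which is conformal on $\RR^{n+1}\setminus\{Q\}$ with conformal factor $\|P-Q\|^{-4}$, identifies $(M',\hat g)$, up to a positive constant rescaling, with the induced Euclidean metric on the embedded hypersurface $\widetilde M := I_Q(M') \subset \RR^{n+1}$. Umbilicity of $Q$ allows me to write $M$ near $Q$ as a graph $x_{n+1} = h(x) = \tfrac{\lambda}{2}|x|^2 + O(|x|^3)$ over $T_QM$ with $\lambda = H_Q/n$, and an explicit inversion computation shows $\widetilde M$ is near infinity the graph of $\tilde h(y) = \tfrac{\lambda}{2} + O(|y|^{-1})$ over $T_QM$. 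Hence $|\nabla\tilde h| = O(|y|^{-2})$ and $\tilde g_{ij} - \delta_{ij} = O(|y|^{-4})$, so $(M',\hat g)$ is asymptotically flat in the PMT sense. The ADM mass integrand is of order $O(|y|^{-5})$, and on $|y|=R$ of area $O(R^{n-1})$ it integrates to a quantity of size $O(R^{n-6}) \to 0$ for $n\le 5$. Thus the ADM mass vanishes, and the rigidity part of the Positive Mass Theorem in dimensions $n=3,4,5$ forces $(M',\hat g)$ to be isometric to $(\RR^n,\delta)$; equivalently, $\widetilde M$ is an intrinsically flat, complete, embedded hypersurface of $\RR^{n+1}$.

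\textbf{Conclusion via Hartman--Nirenberg.} The Hartman--Nirenberg theorem classifies complete embedded flat hypersurfaces in $\RR^{n+1}$ (for $n\ge 2$) as cylinders $\gamma\times\RR^{n-1}$ over plane curves $\gamma$. Smoothness of $M$ at $Q$ requires $\widetilde M$ to admit a single limiting tangent hyperplane at infinity, which forces $\gamma$ to be a straight line; hence $\widetilde M$ is itself a hyperplane. Compactness of $M$ rules out this hyperplane passing through $Q$, so $M = I_Q(\widetilde M)\cup\{Q\}$ is the one-point compactification at $Q$ of the inversion image of a hyperplane, which is a round sphere through $Q$.

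\textbf{Main obstacle.} The critical step is the umbilicity derivation. Careful sign bookkeeping in the submanifold Laplacian formula and in the expansions of $\|P-Q\|^2$ and $t$ in terms of intrinsic normal coordinates is required to arrive at the correctly-signed quadratic, but once umbilicity at $Q$ is secured, the inversion trick makes $(M',\hat g)$ standardly asymptotically flat with vanishing mass integrand, and the combination of PMT rigidity with Hartman--Nirenberg delivers the sphere conclusion.
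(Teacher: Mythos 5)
Your proof is correct and follows the same overall architecture as the paper's: umbilicity of $M$ at $Q$ extracted from the pointwise equation $P_2^g\|\cdot-Q\|^{2-n}=0$, conformal blow-up to a scalar-flat asymptotically flat manifold, vanishing of the ADM mass for $n\le 5$, the Positive Mass Theorem, and Hartman--Nirenberg to pass from intrinsic flatness back to the round sphere. The differences are in execution, and two are worth recording. For umbilicity, the paper plugs the principal directions into the quadratic $n(n-1)\sigma^2-2(n-1)H_Q\sigma+(H_Q^2-|II_Q|^2)=0$ and sums to get $H^2=n|II|^2$; your observation that $II_Q(v,v)$ sweeps out the whole interval $[\kappa_{\min},\kappa_{\max}]$ while a quadratic with leading coefficient $n(n-1)\neq 0$ has at most two roots is a cleaner finish. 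More substantively, for asymptotic flatness you work in graph coordinates of $\widetilde M=I_Q(M\setminus\{Q\})$ over its asymptotic hyperplane, where $\tilde g_{ij}-\delta_{ij}=\partial_i\tilde h\,\partial_j\tilde h=O(|y|^{-4})$, whereas the paper uses the coordinates $y=x|x|^{-2}$, in which the metric only decays like $O(|y|^{-2})$ and the vanishing of the mass hinges on an exact cancellation between the $\partial_r$-term and the $r^{-1}$-term of the ADM integrand (Lemma \ref{lem:AF mass}); your coordinates make the mass computation essentially automatic. The one place you are under-justified is precisely there: $\tilde h=\tfrac{\lambda}{2}+O(|y|^{-1})$ alone does not imply $|\nabla\tilde h|=O(|y|^{-2})$, so you should exhibit the expansion $\tilde h(y)=\tfrac{\lambda}{2}+|y|^{-4}A(y)+O(|y|^{-2})$ with $A$ the cubic Taylor term of the graph function of $M$ at $Q$, so that the leading correction is homogeneous of degree $-1$ and the estimate (together with the second-derivative decay $O(|y|^{-\tau-2})$ required in the definition of asymptotic flatness and in $\partial\tilde g_{ij}=O(|y|^{-5})$) follows by differentiating term by term. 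In the endgame, the paper computes the principal curvatures of the Hartman--Nirenberg cylinder explicitly and lets $|z|\to\infty$ at fixed $t$ to kill the curvature of the profile curve, while you argue via the single limiting tangent hyperplane at infinity forced by smoothness of $M$ at $Q$; both work, though your version should spell out why the reflected tangent planes $dI_Q(T_PM)$ all converge to $T_QM$ as $P\to Q$, and why a genuine cylinder has a one-parameter family of limiting tangent planes at infinity.
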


The restriction on $n$ in the above theorem comes from the following fact: Even though the scalar-flat manifold $(M\setminus\{Q\}, G(\cdot,Q)^{4/(n-2)}g)$ could be asymptotically flat of order $3$ after a further asymptotic coordinate change (see Remark \ref{rmk:coordinate change}), and hence the ADM mass is well defined when $3\leq n\leq 7$, we are able to show the vanishing of mass only when $n\leq 5$. 

Noting that $n=2$ is the critical dimension for conformal Laplacian, we can only confirm the conjecture under extra conditions. See Theorem \ref{thm:rigidity_analytic} in Section \ref{sec:strong rigidity}.

Although the above conjecture is only about the ordinary GJMS operators constructed in \cite{GJMS}, it might be true for the generic GJMS operator $P_{2\gamma}^g$ and deserves further exploration.

\medskip

In the next section, we shall present two warm-up examples: the derivation of Green functions for the usual Laplacian on $\Sp^2$ and for conformal Laplacian on $\Sn (n\geq 3)$. The results suggest that the pulling back of the fundamental solution for $(-\Delta_{\RR^n})^k$ on $\RR^n$ under stereographic projection should be the Green function for $P_{2k}$ on $\Sp^n$.  Then in Section \ref{sec:ordinary}, we verify that this is indeed the case by integrating against test functions. This verifies Theorem \ref{thm:GreenSn} in the case of ordinary GJMS operators. These two sections are included here for the convenience of graduate students.

In Section \ref{sec:Green}, we give a unified proof of Theorem \ref{thm:GreenSn}, including both the local and non-local GJMS operators, using Gegenbauer polynomials. The basics on Gegenbauer polynomials are also briefly reviewed.

In Section \ref{sec:MFE}, we use the method of Section \ref{sec:Green} to study the action of $P_n$ on axially symmetric functions for $n$ even and prove Proposition \ref{prop:full_GHX} and Theorem \ref{thm:meanfield}. 

Sections \ref{sec:warmup}-\ref{sec:MFE} constitute the first part of the paper.  The second part  consists of Sections \ref{sec:rigidity} and \ref{sec:strong rigidity} and an appendix, which are devoted to the rigidity problem. In Section \ref{sec:rigidity}, we prove the Green function rigidity Theorem \ref{thm:rigidity} by showing that the hypersurface is umbilical. This is accomplished by computing limits of the  Green function equations along curves on the hypersurface.
Then in Section \ref{sec:strong rigidity} we study the strong rigidity problem in low dimensions and prove Theorem \ref{thm:rigidity_PMT} via the Positive Mass Theorem when $n=3,4,5$. Finally, we discuss the surface case, and prove the strong rigidity for rotationally symmetric analytic surfaces (Theorem \ref{thm:rigidity_analytic}). In the appendix, we provide another proof of Theorem \ref{thm:rigidity} (2) when $n\geq 5$, by using both the asymptotic expansion formula of Green functions due to Parker-Rosenberg \cite[Theorem 2.2]{ParkerRosenberg} and the comparison of extrinsic and intrinsic distance functions, which may be of independent interest.\\

\noindent{\bf Acknowledgments:} X. Chen is partially supported by NSFC (No.12271244). Y. Shi is partially supported by NSFC (No.12371058). The authors thank Professor Jeffery S. Case for bringing \cite{QingRaske} into their sight.

\section{Two warm-up examples}\label{sec:warmup}

We start with two simplest examples of GJMS operators on spheres, namely the Laplacian on $\Sp^2$ and the conformal Laplacian on $\Sp^n$ for $n\geq 3$.

We first derive the Green function of $-\Delta_{\Sp^2}$. We identify $\Sp^2$ with $\hat{\CC}:=\CC\cup\{\infty\}$ via the stereographic projection. For $z=x+\sqrt{-1}y$, let $\Lap_0=\frac{\partial^2}{\partial x^2}+\frac{\partial^2}{\partial y^2}=4 \frac{\partial^2}{\partial z \partial \bar z}$ be the Euclidean Laplacian and $d V_0=dx \wedge d y=\frac{\sqrt{-1}}{2}d z \wedge d \bar z$.    Then the round metric is 
$$g_{\Sp^2}=\frac{4|dz|^2}{(1+|z|^2)^2},$$
and $-\Delta_{\Sp^2}=-\frac{(1+|z|^2)^2}{4}\Lap_0$, and hence $-\Lap_{\Sp^2} u\ dV_{\Sp^2}=-\Lap_0u\ dV_0$.

We take a cut-off function $\chi$, which identically equals 1 in a neighborhood of 0 and vanishes outside a large compact set. Then for any $u \in C^\infty(\Sp^2)$ and any $w\in \CC$, we have
\begin{align*}
 \int_{\hat\CC} \Lap_{\Sp^2,z} u(z)\log|z-w|\  dV_{\Sp^2}(z)
&=\int_{\hat\CC\setminus{0}} \Lap_{\Sp^2,z} \Big((1-\chi(z))u(z)\Big)\log|z-w|\  dV_{\Sp^2}(z)\\
&\quad+\int_{\CC} \Lap_{\Sp^2,z} \Big(\chi(z)u(z)\Big)\log|z-w|\  dV_{\Sp^2}(z)\\
&=\int_{\CC} \Lap_{0,z} \Big((1-\chi(\frac{1}{z}))u(\frac{1}{z})\Big)\log|\frac{1}{z}-w|\  dV_0(z)\\
&\quad+\int_{\CC} \Lap_{0,z} \Big(\chi(z)u(z)\Big)\log|z-w|\  dV_0(z)\\
&=\int_{\CC} \Lap_{0,z} \Big((1-\chi(\frac{1}{z}))u(\frac{1}{z})\Big)\Big(\log|1-wz|-\log|z|\Big)  dV_0(z)\\
&\quad +2\pi\chi(w)u(w)\\
&=2\pi (1-\chi(w))u(w)-2\pi u(\infty)+2\pi\chi(w)u(w)\\
&= 2\pi\Big(u(w)-u(\infty)\Big).
\end{align*}
This means $\Lap_{\Sp^2,z}\Big(\frac{1}{2\pi}\log|z-w|\Big)=\delta_w-\delta_\infty$, which in turn implies that for any $u\in C^\infty(\Sp^2)$,
\begin{align*}
\int_{\hat\CC}\Lap_{\Sp^2} u(z)\frac{1}{2\pi}\log\Big|\frac{1}{z}-\frac{1}{w}\Big| dV_{\Sp^2}(z)&= u(w)-u(0).
\end{align*}
Then integrate the above identity with respect to $w$ and interchange two integrals:
\begin{align*}
\int_{\hat\CC}u\ dV_{\Sp^2}-|\Sp^2|u(0)=&\int_{\hat\CC}\Lap_{\Sp^2} u(z)\Big(\int_{\hat\CC}\frac{1}{2\pi}\log\Big|\frac{1}{z}-\frac{1}{w}\Big| dV_{\Sp^2}(w)\Big)dV_{\Sp^2}(z).
\end{align*}
This means
$$\Lap_{\Sp^2,z}\Big(\frac{1}{|\Sp^2|}\int_{\hat\CC}\frac{1}{2\pi}\log\Big|\frac{1}{z}-\frac{1}{w}\Big| dV_{\Sp^2}(w)\Big)=-\delta_0(z)+\frac{1}{|\Sp^2|}.$$
So the Green function for $-\Delta_{\Sp^2}$ is
\begin{align*}
G(z)=&\frac{1}{|\Sp^2|}\int_{\hat\CC}\frac{1}{2\pi}\log\Big|\frac{1}{z}-\frac{1}{w}\Big| dV_{\Sp^2}(w)+C\\
=&-\frac{1}{2\pi}\log|z|+\frac{1}{8\pi^2}\int_{\hat\CC}\log|z-w|dV_{\Sp^2}(w)+C.
\end{align*}
Direct computation using polar coordinates gives us:  
$$\int_{\hat\CC}\log|z-w|dV_{\Sp^2}(w)=2\pi\log(1+|z|^2).$$
So we obtain
$$G(z)=-\frac{1}{2\pi}\log|z|+\frac{1}{4\pi}\log(1+|z|^2)+C.$$
Notice that 
$$G(z)=G(z,0)=-\frac{1}{2\pi}\log\frac{|z|}{\sqrt{1+|z|^2}}+C$$
is intimately connected with the extrinsic geometry of $\Sp^2$ inside $\RR^3$. In fact if we regard $Q$ as the south pole of $\Sp^2$, corresponding to $0\in\CC$, and let $P=(x_1,x_2,x_3)$ be the point corresponding to $z\in\CC$, then the extrinsic distance between $P$ and $Q$ is exactly
$$\|P-Q\|=\sqrt{x_1^2+x_2^2+(1+x_3)^2}=\sqrt{2(1+x_3)}=\frac{2|z|}{\sqrt{1+|z|^2}}.$$
So we can write
\begin{equation}
G(P,Q)=-\frac{1}{2\pi}\log\|P-Q\|+C.    
\end{equation}

\medskip

For $\Sn$, we also use the stereographic projection from the north pole to identify it with $\RR^n\cup\{\infty\}$. The round metric is
$$g_{\Sn}=\frac{4|dx|^2}{(1+|x|^2)^2}.$$
This means that if we let $\phi(x):=\Big(\frac{1+|x|^2}{2}\Big)^{\frac{n-2}{2}}$, then  $P_2\phi=0$, since $\phi^{\frac{4}{n-2}}g_{\Sn}=|dx|^2$ is just the Euclidean metric. However, this function blows up at the north pole of $\Sn$, we need to check whether there is a Dirac measure at the north pole when applying $P_2$ to $\phi$.
For this, we work with coordinates around the north pole: $y:=\frac{x}{|x|^2}$. Then under new coordinates, we have
$$g_{\Sn}=\frac{4 |dy|^2}{(1+|y|^2)^2}\qquad \mathrm{~~and~~}\qquad \phi(y)=\Big(\frac{1+|y|^2}{2|y|^2}\Big)^{\frac{n-2}{2}}.$$
 We switch back to the $x$-coordinates, and recall that in $\RR^n$ ($n\geq 3$), we have
$$-\Lap_{0,x}\frac{c_{n,1}}{|x-y|^{n-2}}=\delta_y(x) \qquad\mathrm{~~where~~} \quad c_{n,1}=\frac{1}{(n-2)|\Sp^{n-1}|}=\frac{\Gamma(\frac{n}{2}-1)}{2^2\pi^{\frac{n}{2}}}.$$
  This gives us a hint that the Green function we want has something to do with 
\begin{equation}\label{eqn:phi for Sn}
\varphi(x):=c_{n,1}\Big(\frac{1+|x|^2}{2|x|^2}\Big)^{\frac{n-2}{2}}.    
\end{equation}

\begin{proposition}
Let $\phi(x)$ be defined as \eqref{eqn:phi for Sn}, then we have
$$P_2 \phi=2^{\frac{n-2}{2}}\delta_0.$$
\end{proposition}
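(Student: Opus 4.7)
My plan is to exploit the conformal covariance of $P_2$ to reduce the identity to the Euclidean fundamental solution. Writing the round metric in the $x$-chart as $g_{\Sn}=F^{4/(n-2)}|dx|^2$ with conformal factor
\[
F(x):=\left(\tfrac{2}{1+|x|^2}\right)^{(n-2)/2},
\]
one has $F(0)=2^{(n-2)/2}$ and $dV_{g_{\Sn}}=F^{2n/(n-2)}\,dx$. The key algebraic observation is that
\[
F(x)\,\phi(x) \;=\; c_{n,1}\left(\tfrac{2}{1+|x|^2}\right)^{(n-2)/2}\!\left(\tfrac{1+|x|^2}{2|x|^2}\right)^{(n-2)/2} \;=\; \frac{c_{n,1}}{|x|^{n-2}}\;=:\;G_0(x),
\]
which is precisely the standard fundamental solution of $-\Delta_0$ on $\RR^n$ (so that $-\Delta_0G_0=\delta_0$).

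The conformal covariance of $P_2$ (applied to the pair $g_E, g_{\Sn}=F^{4/(n-2)}g_E$) gives
\[
P_2^{g_{\Sn}}u \;=\; F^{-(n+2)/(n-2)}(-\Delta_0)(Fu), \qquad u\in C^\infty(\Sn).
\]
I then pair $P_2\phi$ against an arbitrary test $u\in C^\infty(\Sn)$ and pass to the $x$-chart (the missing north pole has measure zero):
\begin{align*}
\int_{\Sn}\phi\, P_2u\,dV_{g_{\Sn}}
&= \int_{\RR^n}\phi\cdot F^{-(n+2)/(n-2)}(-\Delta_0)(Fu)\cdot F^{2n/(n-2)}\,dx \\
&= \int_{\RR^n}(F\phi)(-\Delta_0)(Fu)\,dx \;=\; \int_{\RR^n}G_0\,(-\Delta_0)(Fu)\,dx \\
&= (Fu)(0) \;=\; 2^{(n-2)/2}\,u(0),
\end{align*}
where the crucial collapse $F^{-(n+2)/(n-2)}\cdot F^{2n/(n-2)}=F$ produces the product $F\phi=G_0$, and the penultimate equality uses that $G_0$ is the fundamental solution of $-\Delta_0$. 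Since this holds for every test function, we conclude $P_2\phi=2^{(n-2)/2}\delta_0$ in the sense of distributions on $\Sn$.

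The one technical point that requires care is the justification of $\int_{\RR^n}G_0\,(-\Delta_0)v\,dx=v(0)$ for $v=Fu$, which is smooth on $\RR^n$ but not compactly supported. Because $u$ is smooth across the north pole (i.e.\ smooth at $y=0$ in the $y=x/|x|^2$ chart), one obtains the decay $v=O(|x|^{-(n-2)})$ together with $(-\Delta_0)v=O(|x|^{-n})$ as $|x|\to\infty$; combined with $G_0=O(|x|^{-(n-2)})$ and $n\geq 3$, the integrand is absolutely integrable. The identity then follows by integrating by parts on an annulus $\{\epsilon<|x|<R\}$ and sending $\epsilon\to 0$, $R\to\infty$: the boundary term at $|x|=\epsilon$ extracts $v(0)$, while the term at $|x|=R$ vanishes thanks to the decay rates. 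This is the main (but routine) subtlety, and directly parallels the cutoff device already used in the $\Sp^2$ warm-up above.
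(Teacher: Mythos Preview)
Your argument is correct and follows essentially the same route as the paper: both reduce to the Euclidean fundamental solution via the conformal covariance identity $P_2u=F^{-(n+2)/(n-2)}(-\Delta_0)(Fu)$ and the algebraic collapse $F\phi=G_0$. The only difference is that the paper first observes $P_2\phi=0$ on $\Sn\setminus\{0\}$ and then tests only against $u$ vanishing near the north pole (so $Fu$ is compactly supported and no decay analysis is needed), whereas you handle arbitrary $u\in C^\infty(\Sn)$ directly by tracking the decay of $Fu$ and $-\Delta_0(Fu)$ at infinity. Both are valid; the paper's cutoff device is marginally cleaner, while your version is more self-contained. Incidentally, your decay bound on $-\Delta_0 v$ can be sharpened to $O(|x|^{-(n+2)})$ simply by reading off $-\Delta_0(Fu)=F^{(n+2)/(n-2)}P_2u$ with $P_2u$ bounded.
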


\begin{proof}
First note that $\phi$ is smooth in a neighborhood of $\infty$, and outside $0$, we have
\begin{align*}
\Lap_{\Sn} \phi(x)=\big(\frac{2}{1+|x|^2}\big)^{-n}\partial_i\Big(\big(\frac{2}{1+|x|^2}\big)^{n-2}\partial_i \phi\Big)=\frac{n(n-2)}{4}\phi,
\end{align*}
so we have
$$P_2\phi=-\frac{n(n-2)}{4}\phi+\frac{n-2}{4(n-1)}n(n-1)\phi=0.$$
Alternatively, this follows directly from the transformation rule for conformal Laplacian.
For any $u\in C^\infty(\Sn)$ vanishing in a neighborhood of the north pole $N$, by an abuse of notation, we identify it as a smooth function with compact support on $\RR^n$. Then using the conformal invariance of $P_2^g$ that
$$P_{2}u(x)=\Big(\frac{1+|x|^2}{2}\Big)^{\frac{n+2}{2}}(-\Delta_0)\Big(u(x)\big(\frac{2}{1+|x|^2}\big)^{\frac{n-2}{2}}\Big),$$
we have
\begin{align*}
\int_{\Sn} \phi P_2 u dV_{\Sp^n} 
=&\int_{\RR^n} \phi(x)\Big(\frac{2}{1+|x|^2}\Big)^{\frac{n-2}{2}} (-\Delta_0)\Big(u(x)\big(\frac{2}{1+|x|^2}\big)^{\frac{n-2}{2}}\Big) dx\\
=&\int_{\RR^n} \frac{c_{n,1}}{|x|^{n-2}}(-\Delta_0)\Big(u(x)\big(\frac{2}{1+|x|^2}\big)^{\frac{n-2}{2}}\Big) dx\\
=&2^{\frac{n-2}{2}}u(0),
\end{align*}
where we have used $-\Delta_0 (c_{n,1}|x|^{2-n})=\delta_0$ in the last identity.
\end{proof}

\begin{corollary}
The Green function for the conformal Laplacian on $\Sn$ ($n\geq 3$) is 
\begin{equation}
G(x)=\frac{2^{\frac{2-n}{2}}}{(n-2)|\Sp^{n-1}|}\Big(\frac{1+|x|^2}{2|x|^2}\Big)^{\frac{n-2}{2}}=c_{n,1}\Big(\frac{\sqrt{1+|x|^2}}{2|x|}\Big)^{n-2}.
\end{equation}
\end{corollary}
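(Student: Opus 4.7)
The plan is essentially to read off the Green function directly from the Proposition that precedes the Corollary. The Proposition establishes $P_2\phi = 2^{(n-2)/2}\delta_0$ in the distribution sense on $\Sn$, where $0\in\RR^n$ corresponds to the south pole $Q$ under the stereographic projection from the north pole, and $\phi(x) = c_{n,1}\Big(\frac{1+|x|^2}{2|x|^2}\Big)^{(n-2)/2}$ with $c_{n,1}=\frac{1}{(n-2)|\Sp^{n-1}|}$. Since for $n\geq 3$ we are in the regime $2k=2<n$, the defining equation \eqref{Def:Green_fcn_others} is $P_2 G(\cdot,Q)=\delta_Q$, with no ambient constant on the right-hand side. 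Thus I would simply set $G := 2^{-(n-2)/2}\phi$ and this verifies $P_2 G = \delta_0$ as distributions.

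Next, I would substitute the explicit formula for $\phi$ and the value of $c_{n,1}$ to obtain
$$G(x) = \frac{2^{(2-n)/2}}{(n-2)|\Sp^{n-1}|}\Big(\frac{1+|x|^2}{2|x|^2}\Big)^{(n-2)/2},$$
which is the first expression in the Corollary. The second expression is obtained by simple algebraic manipulation, pulling the factor of $2^{-(n-2)/2}$ inside the bracket.

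To make contact with the geometric form in Theorem \ref{thm:GreenSn}(2), I would compute the extrinsic distance $\|P-Q\|$ where $Q$ is the south pole $(0,\ldots,0,-1)$ and $P=\frac{1}{1+|x|^2}(2x,|x|^2-1)$ is the image of $x$ on $\Sn\subset\RR^{n+1}$. A short direct calculation gives $\|P-Q\|^2 = \frac{4|x|^2}{1+|x|^2}$, so $\Big(\frac{\sqrt{1+|x|^2}}{2|x|}\Big)^{n-2} = \|P-Q\|^{2-n}$, and therefore $G(P,Q)=c_{n,1}\|P-Q\|^{2-n}$. Rotational symmetry of $\Sn$ then extends the formula to arbitrary base points $Q$, confirming the case $k=1$ of Theorem \ref{thm:GreenSn}(2).

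There is really no obstacle here: since constants are not in the kernel of $P_2$ when $n\geq 3$, the Green function is unique and no additive constant needs to be tracked, so the scaling argument is complete. The only care required is to double-check that the distributional identity of the Proposition is being used with the correct normalization $\delta_Q$ (not $\delta_Q - |\Sn|^{-1}$), which is indeed the correct convention in the subcritical case.
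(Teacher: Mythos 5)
Your proposal is correct and is essentially the paper's own (implicit) argument: the corollary follows from the preceding Proposition simply by dividing $\phi$ by $2^{\frac{n-2}{2}}$, using that $P_2$ on $\Sn$ has trivial kernel for $n\geq 3$ so the subcritical Green function equation $P_2G=\delta_Q$ has a unique solution. Your closing computation $\|P-Q\|^2=\frac{4|x|^2}{1+|x|^2}$ reproduces the content of the Remark that follows the corollary in the paper.
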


\begin{remark}
For the same reason as in the $\Sp^2$ case, given any  $P,Q\in \Sn$, we have
$$G(P,Q)=\frac{c_{n,1}}{\|P-Q\|^{n-2}},$$
where $\|P-Q\|$ is  understood as the extrinsic distance between $P,Q$ in $\RR^{n+1}$.

\end{remark}

The preceding discussion motivates the following plausible conjecture: The Green function of GJMS operator $P_n$ in critical dimension $n$ should be
$$G(P,Q)=-c_n \log \|P-Q\|+C $$
and for $2k<n$, the Green function of $P_{2k}$ should be
$$G(P,Q)=c_{n,k}\|P-Q\|^{2k-n},$$
where $c_n,c_{n,k}$ are the coefficients of the corresponding fundamental solutions of $(-\Delta_0)^{\frac{n}{2}}$ and $(-\Delta_0)^k$ on $\mathbb{R}^n$. We shall verify this in the next two sections.

\section{Green functions for ordinary GJMS operators on $\Sn$}\label{sec:ordinary}

Now we begin the proof of Theorem \ref{thm:GreenSn}. In this section, we study the Green functions of the ordinary GJMS operators constructed in \cite{GJMS}. In the next section, we shall treat the fractional GJMS operators.

\subsection{GJMS operators on critical dimensional spheres}

Assume $n$ is even. As before, without loss of generality, we assume that $Q$ is the south pole, so that under stereographic projection $\pi:\Sn\setminus\{N\}\to\RR^n$, we have $\pi(Q)=0$.
Let $c_n$ be the constant such that $-c_n\log|z|$ is the fundamental solution of $(-\Delta_0)^{n/2}$. Explicitly, we have
\begin{equation}\label{eqn:c_n}
  c_n=\frac{2}{\Gamma(n)|\Sn|}=\frac{1}{2^{n-1}\pi^{\frac{n}{2}}\Gamma(\frac{n}{2})}.  
\end{equation}

\begin{proposition}\label{prop:GreenSn}
    Let $G(P,Q)=-c_n\log\|P-Q\|$ as above, we have 
     \begin{equation}\label{eqn:GreenSn}
     u(Q)=\frac{1}{|\Sn|}\int_{\Sn}u dV_{\Sn}+\int_{\Sn} G(\cdot, Q) P_nu dV_{\Sn},   
    \end{equation}
    for all $u\in C^\infty(\Sn)$.
\end{proposition}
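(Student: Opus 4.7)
The plan is to pull back to $\RR^n$ via stereographic projection and exploit the critical-dimension conformal covariance of $P_n$, reducing the identity to the Euclidean fundamental solution of $(-\Delta_0)^{n/2}$. Let $\pi:\Sn\setminus\{N\}\to\RR^n$ be stereographic projection from the north pole, arranged so $\pi(Q)=0$; then $g_{\Sn}=4(1+|x|^2)^{-2}|dx|^2$. The covariance $P_n^{g_0}\varphi=e^{nw}P_n^{g_{\Sn}}\varphi$ with $e^{2w}=4(1+|x|^2)^{-2}$, combined with $dV_{\Sn}=e^{nw}\,dx$, produces the fundamental pointwise identity
$$P_n u\,dV_{\Sn}=(-\Delta_0)^{n/2}\tilde u\,dx\qquad\text{on }\Sn\setminus\{N\},$$
where $\tilde u=u\circ\pi^{-1}$. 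Using $\|P-Q\|=2|x|/\sqrt{1+|x|^2}$, the Green function decomposes as $G(x)=-c_n\log|x|+H(x)$, with $H(x)=\tfrac{c_n}{2}\log(1+|x|^2)-c_n\log 2$ smooth on all of $\RR^n_x$.

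The main technical issue is that $\tilde u$ is not compactly supported in $\RR^n_x$; it extends smoothly only through the single point at infinity corresponding to $N$, so integration by parts against $G$ is not directly legal. My remedy is a two-chart cutoff: pick $\chi\in C^\infty(\Sn)$ with $\chi\equiv 0$ near $Q$ and $\chi\equiv 1$ near $N$, and decompose $u=(1-\chi)u+\chi u$. The piece $(1-\chi)\tilde u$ has compact support in $\RR^n_x$, so integration by parts and $(-\Delta_0)^{n/2}(-c_n\log|x|)=\delta_0$ yield
$$\int_{\RR^n_x} G(x)(-\Delta_0)^{n/2}[(1-\chi)\tilde u]\,dx=u(Q)+\int_{\RR^n_x}(-\Delta_0)^{n/2}H\cdot(1-\chi)\tilde u\,dx.$$

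For the piece $\chi u$ I would switch to the antipodal stereographic coordinates $y=x/|x|^2$, in which $g_{\Sn}=4(1+|y|^2)^{-2}|dy|^2$, $G$ pulls back to $c_n\log(\sqrt{1+|y|^2}/2)$ (smooth near $y=0$), and $(\chi u)(y)$ is compactly supported near $y=0$. The analogous conformal-covariance identity from this pole together with integration by parts gives
$$\int_{\Sn}G\,P_n(\chi u)\,dV_{\Sn}=\int_{\RR^n_y}(-\Delta_0)^{n/2}_y G(y)\cdot(\chi u)(y)\,dy.$$

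The computational core is the identity
$$(-\Delta_0)^{n/2}\log(1+|x|^2)=-\frac{2^n(n-1)!}{(1+|x|^2)^n},$$
which I would derive from the $Q$-curvature transformation law: with $w=\log 2-\log(1+|x|^2)$, $Q_{g_0}=0$, and $Q_{g_{\Sn}}=(n-1)!$, one has $(-\Delta_0)^{n/2}w=e^{nw}Q_{g_{\Sn}}=2^n(n-1)!/(1+|x|^2)^n$, after which the constant drops out. Substituting and using $c_n=2/((n-1)!|\Sn|)$ shows $(-\Delta_0)^{n/2}H\,dx=-\frac{1}{|\Sn|}dV_{\Sn}$, and symmetrically $(-\Delta_0)^{n/2}_y G(y)\,dy=-\frac{1}{|\Sn|}dV_{\Sn}$. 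Adding the two chart contributions to the $u(Q)$ term from the first chart collapses the cutoff and yields $\int_{\Sn}G\,P_n u\,dV_{\Sn}=u(Q)-\frac{1}{|\Sn|}\int_{\Sn}u\,dV_{\Sn}$, which is the proposition. The main obstacle is precisely the non-compactness of $\tilde u$ in a single Euclidean chart, resolved by the two-chart split; once that is in place, the Euclidean fundamental-solution formula and the single $Q$-curvature computation drive everything through the volume-form identifications.
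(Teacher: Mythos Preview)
Your proposal is correct and follows essentially the same route as the paper: both split $u$ via a cutoff into a piece supported away from $N$ and a piece supported away from $Q$, use stereographic projection from the appropriate pole in each case so that the conformal covariance $P_n u\,dV_{\Sn}=(-\Delta_0)^{n/2}\tilde u\,dx$ reduces everything to Euclidean integration by parts, and invoke the $Q$-curvature identity $(-\Delta_0)^{n/2}\log(1+|x|^2)=-2^n(n-1)!/(1+|x|^2)^n$ together with the fundamental solution $(-\Delta_0)^{n/2}(-c_n\log|x|)=\delta_0$. The only cosmetic difference is that you isolate the smooth remainder $H(x)=\tfrac{c_n}{2}\log(1+|x|^2)-c_n\log 2$ and compute $(-\Delta_0)^{n/2}H\,dx=-|\Sn|^{-1}dV_{\Sn}$ once, whereas the paper carries the full $\log\|P-Q\|$ through each chart separately; the substance is identical.
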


\begin{proof}[Proof of Proposition \ref{prop:GreenSn}]
    We shall use the following identity, which is equivalent to the fact that the $Q$-curvature  $Q_{\Sn}=(n-1)!$ (see, for example, \cite[Theorem 1.2]{Chang-Yang} and \cite[Theorem 1.3]{Xu}):
    $$(-\Delta_0)^{n/2}\log(1+|z|^2)=-(n-1)!\Big(\frac{2}{1+|z|^2}\Big)^n,\quad z \in \RR^n.$$
    Alternatively, since both sides are real analytic functions of $|z|$, one could verify this via Taylor expansion.

    Writing $u$ as $\chi u+(1-\chi)u$ we can assume  either $u$ vanishes in a neighborhood of $Q$, or $u$ vanishes in a neighborhood of the antipodal point of $Q$, i.e. the north pole $N$.
    
    \medskip
    \noindent\underline{Case 1:} $u$ vanishes in a neighborhood of $Q$.

    In this case, we use the stereographic projection form $Q$, $\tilde\pi:\Sn\setminus\{Q\}\to\RR^n$. Then $u\circ\tilde\pi^{-1}$ is a smooth function on $\RR^n$ with compact support. Then we have
    \begin{align*}
        \int_{\Sn} G(\cdot, Q) P_nu dV_{\Sn}&=\int_{\RR^n} G(\cdot,Q)\circ \tilde\pi^{-1}\ (-\Delta_0)^{\frac{n}{2}} (u\circ\tilde\pi^{-1}) dz\\
        &=\int_{\RR^n} G(\cdot,Q)\circ \tilde\pi^{-1}\ (-\Delta_0)^{\frac{n}{2}} (u\circ\tilde\pi^{-1}) dz\\
        &=\int_{\RR^n} -c_n \log\sqrt{(1+x_{n+1})^2+x_1^2+\dots+x_n^2}(-\Delta_0)^{\frac{n}{2}} (u\circ\tilde\pi^{-1}) dz\\
        &=\int_{\RR^n} -c_n\log\frac{2}{\sqrt{1+|z|^2}}(-\Delta_0)^{\frac{n}{2}} (u\circ\tilde\pi^{-1})(z) dz\\
        &=\frac{c_n}{2}\int_{\RR^n}  (-\Delta_0)^{n/2}\log(1+|z|^2) u\circ\tilde\pi^{-1}(z) dz\\
        &=\frac{1}{2^{n}\Gamma(\frac{n}{2})\pi^{\frac{n}{2}}}\int_{\RR^n} \frac{-2^n(n-1)!}{(1+|z|^2)^n} u\circ\tilde\pi^{-1}(z) dz\\
        &=-\frac{(n-1)!}{2^{n}\Gamma(\frac{n}{2})\pi^{\frac{n}{2}}}\int_{\Sn}u dV_{\Sn}.
    \end{align*}
    On the other hand,
    $$|\Sn|=(n+1)\frac{\pi^{\frac{n+1}{2}}}{\Gamma(\frac{n+1}{2}+1)}=\frac{2^n\pi^{\frac{n}{2}}\Gamma(\frac{n}{2})}{(n-1)!}.$$
    So we obtain
    $$\int_{\Sn} G(\cdot, Q) P_nu dV_{\Sn}+\frac{1}{|\Sn|}\int_{\Sn}u dV_{\Sn}=0=u(Q).$$

\medskip

    \noindent\underline{Case 2:} $u$ vanishes in a neighborhood of the antipodal point of $Q$.

    In this case, $u\circ\pi^{-1}$ is a smooth function with compact support on $\RR^n$. Similarly, we have
    \begin{align*}
        \int_{\Sn} G(\cdot, Q) P_nu dV_{\Sn}&=\int_{\RR^n} G(\cdot,Q)\circ \pi^{-1}\ P_{n,\RR^n} (u\circ\pi^{-1}) dz\\
        &=\int_{\RR^n} G(\cdot,Q)\circ \pi^{-1}\ (-\Delta_0)^{\frac{n}{2}} (u\circ\pi^{-1}) dz\\
        &=\int_{\RR^n} -c_n \log\sqrt{(1+x_{n+1})^2+x_1^2+\dots+x_n^2}(-\Delta_0)^{\frac{n}{2}} (u\circ\pi^{-1}) dz\\
        &=\int_{\RR^n} -c_n\log\frac{2|z|}{\sqrt{1+|z|^2}}(-\Delta_0)^{\frac{n}{2}} (u\circ\pi^{-1})(z) dz\\
        &=u\circ\pi^{-1}(0)+\frac{c_n}{2}\int_{\RR^n}  (-\Delta_0)^{n/2}\log(1+|z|^2) u\circ\pi^{-1}(z) dz\\
        &=u(Q)+\frac{1}{2^{n}\Gamma(\frac{n}{2})\pi^{\frac{n}{2}}}\int_{\RR^n} \frac{-2^n(n-1)!}{(1+|z|^2)^n} u\circ\pi^{-1}(z) dz\\
        &=u(Q)-\frac{1}{|\Sn|}\int_{\Sn}u dV_{\Sn}.
    \end{align*}
    This concludes the proof of Theorem \ref{thm:GreenSn}.
\end{proof}

\subsection{Green function for GJMS operator on non-critical dimensional spheres}
Recall that for $2k<n$ or $2k>n$ when $n$ is odd, the fundamental solution of $(-\Delta_0)^k$ on $\RR^n$ is 
$$\Gamma(x,y)=c_{n,k}|x-y|^{2k-n},$$
where
$$c_{n,k}=\frac{\Gamma(\frac{n}{2}-k)}{2^{2k}\pi^{\frac{n}{2}}\Gamma(k)}.$$

\begin{proposition}\label{prop:subGreenSn}
    Let $2k<n$ or $2k>n$ when $n$ is odd, and for $P,Q\in \Sn\subset\RR^{n+1}$, set $G(P,Q)=c_{n,k}\|P-Q\|^{2k-n}$. Then we have 
     \begin{equation*}
     u(Q)=\int_{\Sn} G(\cdot, Q) P_{2k}u dV_{\Sn},  
    \end{equation*} 
    for all $u\in C^\infty(\Sn)$.
\end{proposition}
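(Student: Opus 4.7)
\emph{Plan of proof.} I would mirror Proposition~\ref{prop:GreenSn} almost verbatim, replacing the logarithmic fundamental solution of $(-\Delta_0)^{n/2}$ by the power-law fundamental solution $c_{n,k}|x|^{2k-n}$ of $(-\Delta_0)^k$. The two key ingredients are the conformal covariance of $P_{2k}$, which under stereographic projection from the north pole $N$ takes the form
\begin{equation*}
P_{2k}^{g_{\Sn}} v(z) = \left(\tfrac{1+|z|^2}{2}\right)^{\frac{n+2k}{2}} (-\Delta_0)^k\!\left(v(z)\left(\tfrac{2}{1+|z|^2}\right)^{\frac{n-2k}{2}}\right),
\end{equation*}
together with the elementary extrinsic-distance identities $\|P-Q\| = 2|z|/\sqrt{1+|z|^2}$ and (under projection from $Q$ itself) $\|P-Q\| = 2/\sqrt{1+|\tilde z|^2}$, where $Q$ is taken to be the south pole (without loss of generality). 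After a cutoff $u = \chi u + (1-\chi)u$, the problem reduces to two cases.

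\emph{Case 1: $u$ vanishes near $Q$.} I would use the stereographic projection $\tilde\pi$ from $Q$ itself, so that $u\circ\tilde\pi^{-1}$ has compact support in $\RR^n$, and $G(\cdot,Q)\circ\tilde\pi^{-1}$ becomes the smooth function $c_{n,k}\, 2^{2k-n}(1+|\tilde z|^2)^{(n-2k)/2}$. Substituting into $\int_{\Sn} G(\cdot,Q)\, P_{2k} u\, dV_{\Sn}$ and combining with the conformal covariance formula above and the volume form $dV_{\Sn} = (2/(1+|\tilde z|^2))^n d\tilde z$, all powers of $1+|\tilde z|^2$ cancel exactly. The resulting integral is a constant multiple of $\int_{\RR^n}(-\Delta_0)^k\phi\, d\tilde z$ for a compactly supported smooth $\phi$, which vanishes by integration by parts. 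This matches $u(Q)=0$.

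\emph{Case 2: $u$ vanishes near $N$.} Now I would project from $N$, so that $u\circ\pi^{-1}$ has compact support, while $G(\cdot,Q)\circ\pi^{-1}(z) = c_{n,k}(2|z|)^{2k-n}(1+|z|^2)^{(n-2k)/2}$ has an integrable singularity at the origin, which corresponds to $Q$. After the same cancellation of conformal factors, the left-hand side reduces to
\begin{equation*}
2^{(2k-n)/2} \int_{\RR^n} c_{n,k}|z|^{2k-n}\,(-\Delta_0)^k\!\left(u(z)\left(\tfrac{2}{1+|z|^2}\right)^{\frac{n-2k}{2}}\right)dz.
\end{equation*}
By the defining property $(-\Delta_0)^k(c_{n,k}|z|^{2k-n}) = \delta_0$ and integration by parts, this equals $2^{(2k-n)/2}\cdot 2^{(n-2k)/2}\, u(Q) = u(Q)$, as desired. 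The only delicate point is the bookkeeping of powers of $2$ and of $(1+|z|^2)$ through the three factors entering the integral; justifying the integration by parts across the origin is routine since $|z|^{2k-n}$ is locally integrable for every $k\geq 1$.
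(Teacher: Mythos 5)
Your proposal is correct and follows essentially the same route as the paper's proof: the same cutoff decomposition into the two cases, the same choice of stereographic projection from $Q$ (resp.\ from $N$), the same use of conformal covariance of $P_{2k}$ to reduce to $(-\Delta_0)^k$, and the same cancellation of the conformal factors leading to $\int_{\RR^n}(-\Delta_0)^k(\text{compactly supported})\,dz=0$ in Case 1 and to $2^{k-\frac{n}{2}}\cdot 2^{\frac{n}{2}-k}u(Q)=u(Q)$ in Case 2. The bookkeeping of the powers of $2$ and of $1+|z|^2$ in your outline matches the paper's computation exactly.
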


Though some special cases have been known before, we provide a unified simple approach here for readers' convenience.

\begin{proof}
    We follow the same route as the proof of Proposition \ref{prop:GreenSn}, namely we can assume that $Q$ is the south pole and only need to consider two special cases when $u$ vanishes either near $Q$ or near the antipodal point of $Q$.

    \medskip

    \noindent\underline{Case 1:} $u$ vanishes in a neighborhood of $Q$.

    As before, we use the stereographic projection from $Q$, $\tilde\pi:\Sn\setminus\{Q\}\to\RR^n$. Then $u\circ\tilde\pi^{-1}$ is a smooth function on $\RR^n$ with compact support. Then we have
    \begin{align*}
        \int_{\Sn} G(\cdot, Q) P_{2k}u dV_{\Sn}&=\int_{\RR^n} \big(\frac{2}{1+|z|^2}\big)^{\frac{n}{2}-k}G(\cdot,Q)\circ \tilde\pi^{-1}\ P_{2k ,\RR^n} \Big(\big(\frac{2}{1+|z|^2}\big)^{\frac{n}{2}-k}u\circ\tilde\pi^{-1}\Big) dz\\
        &=\int_{\RR^n}  \big(\frac{2}{1+|z|^2}\big)^{\frac{n}{2}-k}G(\cdot,Q)\circ \tilde\pi^{-1} (-\Delta_0)^{k} \Big(\big(\frac{2}{1+|z|^2}\big)^{\frac{n}{2}-k}u\circ\tilde\pi^{-1}\Big) dz\\
        &=\int_{\RR^n} c_{n,k} \big(\frac{4}{1+|z|^2}\big)^{k-\frac{n}{2}}\big(\frac{2}{1+|z|^2}\big)^{\frac{n}{2}-k} (-\Delta_0)^{k} \Big(\big(\frac{2}{1+|z|^2}\big)^{\frac{n}{2}-k}u\circ\tilde\pi^{-1}\Big) dz\\
        &=c_{n,k}2^{k-\frac{n}{2}}\int_{\RR^n}(-\Delta_0)^{k} \Big(\big(\frac{2}{1+|z|^2}\big)^{\frac{n}{2}-k}u\circ\tilde\pi^{-1}\Big) dz\\
        &=0.
    \end{align*}
    
    So we obtain
    $$\int_{\Sn} G(\cdot, Q) P_{2k}u dV_{\Sn}=0=u(Q).$$
    
    \medskip

    \noindent\underline{Case 2:} $u$ vanishes in a neighborhood of the antipodal point of $Q$.

    In this case, $u\circ\pi^{-1}$ is a smooth function with compact support on $\RR^n$, similarly we have
    \begin{align*}
        \int_{\Sn} G(\cdot, Q) P_{2k}u dV_{\Sn}&=\int_{\RR^n} \big(\frac{2}{1+|z|^2}\big)^{\frac{n}{2}-k}G(\cdot,Q)\circ \pi^{-1}\ P_{2k ,\RR^n} \Big(\big(\frac{2}{1+|z|^2}\big)^{\frac{n}{2}-k}u\circ\pi^{-1}\Big) dz\\
        &=\int_{\RR^n}  \big(\frac{2}{1+|z|^2}\big)^{\frac{n}{2}-k}G(\cdot,Q)\circ \pi^{-1} (-\Delta_0)^{k} \Big(\big(\frac{2}{1+|z|^2}\big)^{\frac{n}{2}-k}u\circ\pi^{-1}\Big) dz\\
        &=\int_{\RR^n} c_{n,k} \big(\frac{4|z|^2}{1+|z|^2}\big)^{k-\frac{n}{2}}\big(\frac{2}{1+|z|^2}\big)^{\frac{n}{2}-k} (-\Delta_0)^{k} \Big(\big(\frac{2}{1+|z|^2}\big)^{\frac{n}{2}-k}u\circ\pi^{-1}\Big) dz\\
        &=2^{k-\frac{n}{2}}\int_{\RR^n}c_{n,k}|z|^{2k-n}(-\Delta_0)^{k} \Big(\big(\frac{2}{1+|z|^2}\big)^{\frac{n}{2}-k}u\circ\pi^{-1}\Big) dz\\
        &=2^{k-\frac{n}{2}}2^{\frac{n}{2}-k}u\circ\pi^{-1}(0)=u(Q).
    \end{align*}
    This concludes the proof of Proposition \ref{prop:subGreenSn}.
\end{proof}

\section{Gegenbauer polynomials 
 and Green functions of non-local GJMS operators on spheres}\label{sec:Green}

\subsection{Preliminaries on Gegenbauer polynomials}\label{subsec:Gegenbauer}

We now recall some basic properties of  spherical harmonics and Gegenbauer polynomials that we shall use later. Our main references are \cite[Chapter 4]{Stein-Weiss} and  \cite[Chapter 2]{Hua}, see also \cite[Chapter 7]{Hua_Monog}. Remember that since we work on $\Sn$ instead of $\Sp^{n-1}$, the notation here is a little bit different. 

For $\lambda>-\frac{1}{2}$, the Gegenbauer polynomial $P^\lambda_k$ is a polynomial of degree $k$, and can be defined by a generating function
$$\frac{1}{(1-2tx+t^2)^\lambda}=\sum_{k=0}^\infty P^\lambda_k(x)t^k.$$
More explicitly, 
\begin{align*}
P^\lambda_k(x)=\frac{(-2)^k}{k!}\frac{\Gamma(k+\lambda)\Gamma(k+2\lambda)}{\Gamma(\lambda)\Gamma(2k+2\lambda)}(1-x^2)^{\frac{1}{2}-\lambda}\frac{d^k}{dx^k} (1-x^2)^{k-\frac{1}{2}+\lambda}
\end{align*}
and
\begin{align*}
\int_{-1}^1 P^\lambda_k(x) P^\lambda_l (x)(1-x^2)^{\lambda-\frac{1}{2}}dx=\frac{2^{1-2\lambda}\pi \Gamma(k+2\lambda)}{\Gamma(\lambda)^2(k+\lambda)\Gamma(k+1)} \delta_{kl}.
\end{align*}
See Hua \cite[formulae $(8)$ on p.34 and $(4)$ on p.37]{Hua} for two formulae above.

The relation between Gegenbauer polynomials and the analysis on spheres is as follows: Let $\mathcal{H}_k$ be the space of  spherical harmonics of degree $k$ on $\Sn$. Then 
$$\dim\mathcal{H}_k:=N_k:=\binom{n+k}{k}-\binom{n+k-2}{k-2}=\frac{(n+2k-1)\Gamma(n+k-1)}{\Gamma(n)\Gamma(k+1)}.$$ 
Choose an $L^2(\Sn)$-orthonormal basis $Y^{(k)}_m$, $m=1,\cdots, N_k$ of $\mathcal{H}_k$, for any $P,Q\in \Sn$, it follows from  \cite[Theorem 2.14 of Chapter 4]{Stein-Weiss} that
\begin{equation}\label{eqn:FunkHecke}
\sum_{m=1}^{N_k}\overline{Y^{(k)}_m(Q)}Y^{(k)}_m(P)=c_{k,n}P^\lambda_k(P\cdot Q),    
\end{equation}
where $P^\lambda_k$ is the Gegenbauer polynomial of degree $k$ with $\lambda=\frac{n-1}{2}$,
and $c_{k,n}$ is a constant. See also \cite[equation (7.3.9)]{Hua_Monog}. This is a special case of the so-called `\emph{Funk-Hecke theorem}'. Furthermore, the constant $c_{k,n}$ can be explicitly calculated:  Choose $P=Q$ and integrate both sides of \eqref{eqn:FunkHecke} to show
\begin{align*}
  c_{k,n}&=\frac{N_k}{P^\lambda_k(1)|\Sn|}
  =\frac{1}{|\Sn|}\frac{(n+2k-1)\Gamma(n+k-1)}{\Gamma(n)\Gamma(k+1)}\frac{\Gamma(k+1)\Gamma(n-1)}{\Gamma(n+k-1)}=\frac{n+2k-1}{(n-1)|\Sn|}.
\end{align*}
Note that this kind of phenomenon also occurs in the study of Bergman kernels of domains or K\"ahler metrics with $U(n)$ symmetries. See for example \cite[\S 1.4]{Krantz} and \cite{Lu-Tian}.

\subsection{Green functions for $P_n$ on $\Sn$}

We first study the Green function for $P_n$ on $\Sn$ for all $n \geq 2$.

\begin{proof}[Proof of Theorem \ref{thm:GreenSn} (1)]
 Recall that if $Y$ is a spherical harmonic function of degree $k$ on $\Sn$, then $-\Delta_{\Sn} Y=k(k+n-1)Y_k$ and consequently $P_n Y=\lambda_k Y$, where $\lambda_k=\Gamma(n+k)/\Gamma(k)$.

Based on this, by spectral theory for elliptic operators, we conclude that, for $Q\in \Sn$ the south pole and $P\in \Sn$ an arbitrary point, the Green function is
$$G(P,Q)=\sum_{k=1}^\infty\frac{1}{\lambda_k}\sum_{m=1}^{N_k}\overline{Y^{(k)}_m(Q)}Y^{(k)}_m(P).$$
Now, by \eqref{eqn:FunkHecke}, we have
\begin{align*}
G(P,Q)&=\sum_{k=1}^\infty \frac{c_{k,n}}{\lambda_k}P^\lambda_k(P\cdot Q)=\sum_{k=1}^\infty \frac{c_{k,n}}{\lambda_k}P^\lambda_k(-x)\\
&=\frac{1}{(n-1)|\Sn|}\sum_{k=1}^\infty\frac{(-1)^k(n+2k-1)\Gamma(k)}{\Gamma(n+k)}P^\lambda_k(x)=:f(x),
\end{align*}
where we denote $x:-P\cdot Q$ in the second equality.

\medskip
\noindent{\bf Claim:} There is a constant $c$ such that $$G(P,Q)=f(x)=-\frac{c_n}{2}\log(1+x)+c,$$
where $c_n$ is the same as Theorem \ref{thm:GreenSn} (1).
\medskip

Since both $\log(1+x)$ and $f(x)$ are elements of $L^2([-1,1], d\mu)$, it suffices to check that both sides have the same inner product with each $P^\lambda_k, k\geq 1$.

On one hand, we have
\begin{align*}
    \int_{-1}^1 f(x)P^\lambda_k(x)d\mu &= \frac{1}{(n-1)|\Sn|}\frac{(-1)^k(n+2k-1)\Gamma(k)}{\Gamma(n+k)}\|P^\lambda_k\|_{L^2([-1,1],d\mu)}^2\\
    &= \frac{(-1)^k}{(n-1)|\Sn|}\frac{(n+2k-1)\Gamma(k)}{\Gamma(n+k)}\frac{2^{2-n}\pi \Gamma(k+n-1)}{\Gamma(\frac{n-1}{2})^2(k+\frac{n-1}{2})\Gamma(k+1)}\\
    &= \frac{\Gamma(\frac{n-1}{2})}{4\pi^{\frac{n+1}{2}}}\frac{(-1)^k2^{3-n}\pi}{\Gamma(\frac{n-1}{2})^2(n+k-1)k}\\
    &=\frac{1}{(4\pi)^{\frac{n-1}{2}}\Gamma(\frac{n-1}{2})}\frac{(-1)^k}{k(n+k-1)}.
\end{align*}
On the other hand, we have
\begin{align*}
  \int_{-1}^1 \log(1+x)P^\lambda_k(x)d\mu &=\frac{(-2)^k}{k!}\frac{\Gamma(k+\frac{n-1}{2})\Gamma(k+n-1)}{\Gamma(\frac{n-1}{2})\Gamma(2k+n-1)}\cdot\\
  &\quad \int_{-1}^1\log(1+x)\frac{d^k}{dx^k} (1-x^2)^{k+\frac{n-2}{2}} dx,
\end{align*}
where the integration by parts gives
\begin{align*}
  \int_{-1}^1\log(1+x)\frac{d^k}{dx^k} (1-x^2)^{k+\frac{n-2}{2}} dx&=-\int_{-1}^1\frac{1}{1+x}  \frac{d^{k-1}}{dx^{k-1}} (1-x^2)^{k+\frac{n-2}{2}} dx\\
  &=-\int_{-1}^1\frac{1}{(1+x)^2}  \frac{d^{k-2}}{dx^{k-2}} (1-x^2)^{k+\frac{n-2}{2}} dx\\
  &=\cdots\\
  &=-\int_{-1}^1(k-1)!(1+x)^{-k} (1-x^2)^{k+\frac{n-2}{2}} dx\\
  &=-(k-1)!\int_{-1}^1(1+x)^{\frac{n-2}{2}}(1-x)^{k+\frac{n-2}{2}} dx\\
  &=-\Gamma(k)2^{k+n-1}\frac{\Gamma(\frac{n}{2})\Gamma(k+\frac{n}{2})}{\Gamma(k+n)}.
\end{align*}
Using the fact that $\Gamma(x)\Gamma(x+\frac{1}{2})=2^{1-2x}\sqrt{\pi}\Gamma(2x)$, we get
$$\int_{-1}^1\log(1+x)\frac{d^k}{dx^k} (1-x^2)^{k+\frac{n-2}{2}} dx=-\frac{2\sqrt{\pi}\Gamma(\frac{n}{2})}{\Gamma(\frac{n-1}{2})}\frac{(-1)^k}{k(k+n-1)}.$$
Hence we obtain
$$f(x)=-\frac{1}{(4\pi)^{\frac{n}{2}}\Gamma(\frac{n}{2})}\log(1+x)+c,$$
from which we conclude that
$$G(P,Q)=-\frac{1}{(4\pi)^{\frac{n}{2}}\Gamma(\frac{n}{2})}\log\|P-Q\|^2+C=-\frac{2}{(n-1)!|\Sn|}\log\|P-Q\|^2+c$$
for some constant $c$.   
\end{proof}

\subsection{Green function for $P_{2\sigma}$ on non-critical dimensional $\Sn$}

Since case (2) of Theorem \ref{thm:GreenSn} is contained in case (3), it reduces to the proof of case (3).

\begin{proof}[Proof of Theorem \ref{thm:GreenSn} (3)]
For $\sigma\in (0,\frac{n}{2})\cup (\frac{n}{2},+\infty)$ and $\sigma \notin \frac{n}{2}+ \N$, we consider the fractional GJMS operator $P_{2\sigma}$ on $\Sn$ and define its Green function in the distribution sense as in \eqref{Def:Green_fcn_others}. 

For a spherical harmonic $Y_k$ of degree $k$ on $\Sn$, it is known (see, for example \cite[p.3691]{Branson95} or \cite[p.479]{Morpurgo}) that 
$$P_{2\sigma}Y_k=\frac{\Gamma(k+\frac{n}{2}+\sigma)}{\Gamma(k+\frac{n}{2}-\sigma)}Y_k:=\lambda_k Y_k.$$
Since $\sigma\notin\frac{n}{2}+\mathbb{N}$, $\lambda_k$ is never 0 and hence $\mathrm{Ker}(P_{2\sigma})=\{0\}$.

As in the preceding section, for $Q\in \Sn$ the south pole and $P\in \Sn$ an arbitrary point, let $x=-P \cdot Q$ and  $\lambda=\frac{n-1}{2}$, the Green function of $P_{2\sigma}$ can be written as 
\begin{align*}
f(x):=&\sum_{k=0}^\infty\frac{1}{\lambda_k}\sum_{m=1}^{N_k}\overline{Y^{(k)}_m(Q)}Y^{(k)}_m(P)\\
=&\frac{1}{(n-1)|\Sn|}\sum_{k=0}^\infty \frac{(-1)^k(n+2k-1)\Gamma(k+\frac{n}{2}-\sigma)}{\Gamma(k+\frac{n}{2}+\sigma)}P^\lambda_k(x).
\end{align*}

\noindent{\bf Claim:} We have $$G(P,Q)=f(x)=c_{n,\sigma}(1+x)^{\sigma-\frac{n}{2}},$$
where 
$$c_{n,\sigma}=\frac{\Gamma(\frac{n}{2}-\sigma)}{2^{2\sigma}\pi^{\frac{n}{2}}\Gamma(\sigma)}.$$

To this end, as before we have
\begin{align*}
 \int_{-1}^1 f(x)P^\lambda_k(x)d\mu &= \frac{1}{(n-1)|\Sn|}\frac{(-1)^k(n+2k-1)\Gamma(k+\frac{n}{2}-\sigma)}{\Gamma(k+\frac{n}{2}+\sigma)}\|P^\lambda_k\|_{L^2([-1,1],d\mu)}^2\\
    &= \frac{1}{(n-1)|\Sn|}\frac{(-1)^k(n+2k-1)\Gamma(k+\frac{n}{2}-\sigma)}{\Gamma(k+\frac{n}{2}+\sigma)}\\
    &\qquad\cdot\frac{2^{2-n}\pi \Gamma(k+n-1)}{\Gamma(\frac{n-1}{2})^2(k+\frac{n-1}{2})\Gamma(k+1)}\\
    &= \frac{2^{3-n}\pi}{(n-1)|\Sn|\Gamma(\frac{n-1}{2})^2}\frac{(-1)^k\Gamma(k+n-1)\Gamma(k+\frac{n}{2}-\sigma)}{\Gamma(k+1)\Gamma(k+\frac{n}{2}+\sigma)}.
\end{align*}

On the other hand, we have
\begin{align*}
  \int_{-1}^1 (1+x)^{\sigma-\frac{n}{2}}P^\lambda_k(x)d\mu &=\frac{(-2)^k}{k!}\frac{\Gamma(k+\frac{n-1}{2})\Gamma(k+n-1)}{\Gamma(\frac{n-1}{2})\Gamma(2k+n-1)}\cdot\\
  &\quad \int_{-1}^1(1+x)^{\sigma-\frac{n}{2}}\frac{d^k}{dx^k} (1-x^2)^{k+\frac{n-2}{2}} dx,
\end{align*}
where
\begin{align*}
  \int_{-1}^1(1+x)^{\sigma-\frac{n}{2}}\frac{d^k}{dx^k} (1-x^2)^{k+\frac{n-2}{2}} dx&=(\frac{n}{2}-\sigma)(\frac{n}{2}+1-\sigma)\cdots(\frac{n}{2}+k-1-\sigma)\\
  &\qquad\cdot \int_{-1}^1 (1+x)^{\sigma-\frac{n}{2}-k}(1-x^2)^{k+\frac{n-2}{2}}dx\\
  &=\frac{\Gamma(\frac{n}{2}+k-\sigma)}{\Gamma(\frac{n}{2}-\sigma)}\int_{-1}^1 (1+x)^{\sigma-1}(1-x)^{k+\frac{n-2}{2}}dx\\
  &=\frac{\Gamma(\frac{n}{2}+k-\sigma)}{\Gamma(\frac{n}{2}-\sigma)}\cdot\frac{2^{\sigma+k+\frac{n}{2}-1}\Gamma(\sigma)\Gamma(k+\frac{n}{2})}{\Gamma(\sigma+k+\frac{n}{2})}.
\end{align*}
Again, using $$\Gamma(k+\frac{n-1}{2})\Gamma(k+\frac{n}{2})=2^{2-2k-n}\sqrt{\pi}\Gamma(2k+n-1),$$
we conclude that
\begin{align*}
    \int_{-1}^1 (1+x)^{\sigma-\frac{n}{2}}P^\lambda_k(x)d\mu
    &=\frac{(-2)^k}{k!}\frac{\Gamma(k+\frac{n-1}{2})\Gamma(k+n-1)}{\Gamma(\frac{n-1}{2})\Gamma(2k+n-1)}\cdot \frac{\Gamma(\frac{n}{2}+k-\sigma)}{\Gamma(\frac{n}{2}-\sigma)}\\
    &\qquad\cdot\frac{2^{\sigma+k+\frac{n}{2}-1}\Gamma(\sigma)\Gamma(k+\frac{n}{2})}{\Gamma(\sigma+k+\frac{n}{2})}\\
    &=\frac{2^{\sigma-\frac{n}{2}+1}\sqrt{\pi}\Gamma(\sigma)}{\Gamma(\frac{n-1}{2})\Gamma(\frac{n}{2}-\sigma)}\cdot\frac{(-1)^k\Gamma(k+n-1)\Gamma(k+\frac{n}{2}-\sigma)}{\Gamma(k+1)\Gamma(k+\frac{n}{2}+\sigma)}.
\end{align*}
In turn, this implies
\begin{align*}
G(P,Q)=f(x)&=\frac{2^{3-n}\pi}{(n-1)|\Sn|\Gamma(\frac{n-1}{2})^2}\cdot \frac{\Gamma(\frac{n-1}{2})\Gamma(\frac{n}{2}-\sigma)}{2^{\sigma-\frac{n}{2}+1}\sqrt{\pi}\Gamma(\sigma)}(1+x)^{\sigma-\frac{n}{2}} \\
&=\frac{2^{2-\sigma-\frac{n}{2}}\sqrt{\pi}\Gamma(\frac{n}{2}-\sigma)}{(n-1)|\Sn|\Gamma(\frac{n-1}{2})\Gamma(\sigma)}(1+x)^{\sigma-\frac{n}{2}}\\
&=\frac{2^{-\sigma-\frac{n}{2}}\Gamma(\frac{n}{2}-\sigma)}{\pi^{\frac{n}{2}}\Gamma(\sigma)}(1+x)^{\sigma-\frac{n}{2}}\\
&=\frac{2^{-\sigma-\frac{n}{2}}\Gamma(\frac{n}{2}-\sigma)}{\pi^{\frac{n}{2}}\Gamma(\sigma)}\Big(\frac{\|P-Q\|^{2}}{2}\Big)^{\sigma-\frac{n}{2}}\\
&=\frac{\Gamma(\frac{n}{2}-\sigma)}{2^{2\sigma}\pi^{\frac{n}{2}}\Gamma(\sigma)}\|P-Q\|^{2\sigma-n}.
\end{align*}

This finishes the proof of Theorem \ref{thm:GreenSn}. 
\end{proof}

\section{A clean formula for $P_n$ on axially symmetric functions and mean field equation}\label{sec:MFE}

As an application of the method in the previous section, we shall study a higher-order mean field type equation on $\Sp^n$ for $n$ even.

Suppose a smooth function $u$ depends only on $x:=x_{n+1}$ with $n$ even, we are now in a position to prove Proposition \ref{prop:full_GHX}, namely 
\begin{equation}\label{formula:GHX}
P_n u(x)=(-1)^{\frac{n}{2}}[(1-x^2)^{\frac{n}{2}}u']^{(n-1)}.
\end{equation} 

\begin{proof}[Proof of Proposition \ref{prop:full_GHX}:]
    To this end, up to a constant we define
$$\mathcal{Q} u(x):=\Big[(1-x^2)^\frac{n}{2}u'(x)\Big]^{(n-1)}.$$
To check that $\mathcal{Q}$ coincides the GJMS operator $(-1)^{n/2}P_n$ in critical dimension $n$, we only need to check their actions coincide on Gegenbauer polynomials. For simplicity, we drop the normalizing constants and consider
$$u_k(x):=(1-x^2)^{-\frac{n-2}{2}}\frac{d^k}{dx^k}\Big[(1-x^2)^{k+\frac{n-2}{2}}\Big].$$
It is easy to see that $u_k$ is a polynomial of degree $k$ and $u_k, u_l, k\neq l$ are orthogonal to each other with respect to the measure $$d\mu:=(1-x^2)^{\frac{n-2}{2}}dx$$
on $[-1,1]$.

To prove $\mathcal{Q}u_k$ is a constant multiple of $u_k$, direct computation seems to be too complicated. Here we adopt the dual viewpoint as in the previous section, by noticing that it suffices to check that for any $l\neq k$, we have
$\mathcal{Q}u_k$ is orthogonal to $u_l$ with respect to $d\mu$
and that $\int_{-1}^1 \mathcal{Q}u_k(x) u_k(x)d\mu$ is the correct constant multiple of $\int_{-1}^1 u^2_k(x)d\mu$. Thus  \eqref{formula:GHX} follows from the following two elementary lemmas.
\end{proof}

\begin{lemma}\label{lem:GHX_1}
    For $k\neq l$, we have
   $$\int_{-1}^1 \mathcal{Q}u_k(x) u_l(x)d\mu=0.$$ 
\end{lemma}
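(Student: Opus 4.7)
The plan rests on two simple observations. First, $\mathcal{Q}u_k$ is a polynomial of degree at most $k$: indeed $u_k'$ has degree $k-1$, so $(1-x^2)^{n/2}u_k'$ has degree $k+n-1$, and taking $n-1$ further derivatives lowers the degree to $k$. Second, the $u_j$'s are, up to constants, classical Gegenbauer polynomials (as is manifest from the Rodrigues-type formula in their definition), so each $u_j$ is $d\mu$-orthogonal to every polynomial of degree strictly less than $j$. The case $l > k$ of the lemma is then immediate: $\mathcal{Q}u_k$ has degree $\leq k < l$, so $\int \mathcal{Q}u_k \cdot u_l \, d\mu = 0$.

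For the substantive case $l < k$, I would write
$$\int_{-1}^1 \mathcal{Q}u_k \cdot u_l \, d\mu = \int_{-1}^1 A^{(n-1)} B \, dx$$
with $A := (1-x^2)^{n/2} u_k'$ and $B := u_l (1-x^2)^{(n-2)/2} = \frac{d^l}{dx^l}\bigl[(1-x^2)^{l+(n-2)/2}\bigr]$, and integrate by parts $n-1$ times. The $n-1$ boundary contributions have the form $A^{(n-2-j)} B^{(j)}\bigl|_{-1}^{+1}$ for $j = 0, \dots, n-2$. Since $A$ vanishes to order $n/2$ at $\pm 1$, one has $A^{(n-2-j)}(\pm 1) = 0$ whenever $j \geq n/2 - 1$; since $(1-x^2)^{l+(n-2)/2}$ vanishes to order $l + (n-2)/2$ at $\pm 1$, one has $B^{(j)}(\pm 1) = 0$ whenever $j \leq n/2 - 2$. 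These two ranges tile $\{0, 1, \dots, n-2\}$, so every boundary term vanishes, leaving
$$\int_{-1}^1 \mathcal{Q}u_k \cdot u_l \, d\mu = -\int_{-1}^1 (1-x^2)^{n/2} u_k'(x) \cdot P_l(x) \, dx,$$
where $P_l := B^{(n-1)} = \frac{d^{l+n-1}}{dx^{l+n-1}}\bigl[(1-x^2)^{l+(n-2)/2}\bigr]$ is a polynomial of degree $l-1$ (the zero polynomial when $l = 0$, which settles that subcase directly).

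One more integration by parts, whose boundary term vanishes thanks to the surviving $(1-x^2)^{n/2}$ factor, converts the right-hand side into $\int_{-1}^1 u_k \cdot R \, d\mu$, where $R(x) := (1-x^2) P_l'(x) - n x P_l(x)$ is a polynomial of degree $l$. Since $l < k$, the $d\mu$-orthogonality of $u_k$ to polynomials of lower degree forces this integral to vanish, as desired. The main (mild) obstacle is the boundary-term bookkeeping in the first step; the key point is that $n$ being even makes the two vanishing thresholds mesh exactly at $j = n/2 - 1$, so that no boundary terms survive.
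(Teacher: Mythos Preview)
Your argument is correct and follows essentially the same route as the paper: both dispose of $l>k$ by the degree bound on $\mathcal{Q}u_k$, and for $l<k$ both integrate by parts $n-1$ times (with the same boundary-term bookkeeping) to reach $\int (1-x^2)^{n/2}u_k'\cdot B^{(n-1)}\,dx$ and then reduce to the $d\mu$-orthogonality of $u_k$ against a polynomial of degree at most $l$. Your final step is slightly more streamlined---one further integration by parts and an appeal to the general orthogonality property of $u_k$---whereas the paper instead expands $(1-x^2)^{n/2}u_k'$ via the Rodrigues formula for $u_k$ and integrates by parts $k$ (resp.\ $k+1$) more times explicitly; these are the same computation packaged differently.
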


\begin{lemma}\label{lem:GHX_2}
    We have
    $$\int_{-1}^1 \mathcal{Q}u_k(x) u_k(x)d\mu=\bar\lambda_k\int_{-1}^1 u^2_k(x)d\mu,$$
    where $$\bar\lambda_k=(-1)^{\frac{n}{2}}\binom{k+n-1}{n} n!=(-1)^{\frac{n}{2}}\frac{(k+n-1)!}{(k-1)!}=(-1)^{\frac{n}{2}}\frac{\Gamma(k+n)}{\Gamma(k)}.$$
\end{lemma}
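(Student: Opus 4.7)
The plan is to evaluate the numerator and denominator in Lemma~\ref{lem:GHX_2} directly and in parallel, by exploiting the Rodrigues-type identity
\[ u_k(x)(1-x^2)^{\frac{n-2}{2}}=\frac{d^k}{dx^k}\Big[(1-x^2)^{k+\frac{n-2}{2}}\Big] \]
that is built into the very definition of $u_k$. Substituting this on the $d\mu$ side of each integral allows me to move $k$ derivatives off that factor via integration by parts, reducing both integrals to the common integral $\int_{-1}^{1}(1-x^2)^{k+\frac{n-2}{2}}\,dx$ up to explicit constants that can then be compared.

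Concretely, for the right-hand side, $\int_{-1}^{1}u_k^2\,d\mu$ becomes $\int_{-1}^{1}u_k\cdot (d/dx)^k[(1-x^2)^{k+\frac{n-2}{2}}]\,dx$; integrating by parts $k$ times yields $(-1)^k u_k^{(k)}\int_{-1}^1(1-x^2)^{k+\frac{n-2}{2}}\,dx=(-1)^k k!\,a_k\int_{-1}^1(1-x^2)^{k+\frac{n-2}{2}}\,dx$, where $a_k$ denotes the leading coefficient of the degree-$k$ polynomial $u_k$. Performing the same substitution and the same $k$ integrations by parts on the left-hand side transforms it into
\[ (-1)^k\int_{-1}^{1}\big[(1-x^2)^{n/2}u_k'\big]^{(n+k-1)}(1-x^2)^{k+\frac{n-2}{2}}\,dx. \]
The crucial observation is that $(1-x^2)^{n/2}u_k'$ is a polynomial of degree exactly $n+k-1$ (since $n$ is even and $u_k'$ has degree $k-1$), so its $(n+k-1)$-th derivative is the constant $(n+k-1)!$ times its leading coefficient, and a one-line inspection of top-degree terms shows this coefficient equals $(-1)^{n/2}k\,a_k$.

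Taking the ratio of the two expressions, the common factors $a_k$ and $\int_{-1}^{1}(1-x^2)^{k+\frac{n-2}{2}}\,dx$ cancel, leaving
\[ \bar\lambda_k=(-1)^{n/2}\frac{k\,(n+k-1)!}{k!}=(-1)^{n/2}\frac{\Gamma(n+k)}{\Gamma(k)}, \]
which is the claimed constant. The only technical point to verify is that all boundary contributions from the $k$ integrations by parts vanish; this is immediate because $n$ even makes $(1-x^2)^{k+\frac{n-2}{2}}$ a genuine polynomial vanishing to integer order $k+\frac{n-2}{2}$ at $\pm 1$, so its derivatives of orders $0,1,\dots,k-1$ (all strictly below that order for $n\geq 2$) also vanish there. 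I expect this boundary-term check to be the main obstacle, but it is really just bookkeeping and should not cause any substantive difficulty.
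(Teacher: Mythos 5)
Your proposal is correct and follows essentially the same route as the paper: both substitute the Rodrigues-type formula for $u_k\,d\mu$, integrate by parts $k$ times to land on $(-1)^k\int_{-1}^1\big[(1-x^2)^{n/2}u_k'\big]^{(n+k-1)}(1-x^2)^{k+\frac{n-2}{2}}dx$, and then exploit that $(1-x^2)^{n/2}u_k'$ has degree $n+k-1$ so only its leading coefficient survives. The paper phrases the last step by rewriting the constant as $\binom{n+k-1}{n}n!\,u_k^{(k)}$ and undoing the integration by parts, whereas you track the leading coefficient $a_k$ explicitly and cancel it in a ratio; this is only a bookkeeping difference.
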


\begin{proof}[Proof of Lemma \ref{lem:GHX_1}:]
    We shall compute the integral directly, by observing that when the functions vanish at boundary of $[-1,1]$, we can integrate by parts without worry. In fact, for any smooth function $f$, we always have
    $$\int_{-1}^1 f(x)u_k(x) d\mu=(-1)^k\int_{-1}^1 f^{(k)}(x)(1-x^2)^{k+\frac{n-2}{2}}dx.$$
    Notice that $\mathcal{Q}u_k$ is a polynomial of degree at most $k$, for any $l>k$. Then we have
    $$\int_{-1}^1 \mathcal{Q}u_k(x) u_l(x)d\mu=(-1)^l\int_{-1}^1\Big[\mathcal{Q}u_k\Big]^{(l)}(1-x^2)^{l+\frac{n-2}{2}}dx=0.$$
    
    Now assume $l<k$, integrating by parts gives
    \begin{align*}
       &\int_{-1}^1 \mathcal{Q}u_k(x) u_l(x)d\mu=\int_{-1}^1 \Big[(1-x^2)^\frac{n}{2}u_k'(x)\Big]^{(n-1)}\Big[(1-x^2)^{l+\frac{n-2}{2}}\Big]^{(l)} dx\\
       =&(-1)^{\frac{n-2}{2}} \int_{-1}^1 \Big[(1-x^2)^\frac{n}{2}u_k'(x)\Big]^{(\frac{n}{2})}\Big[(1-x^2)^{l+\frac{n-2}{2}}\Big]^{(l+\frac{n-2}{2})} dx\\
       =&(-1)^{n-1}\int_{-1}^1 (1-x^2)^\frac{n}{2}\frac{d}{dx}\left\{(1-x^2)^{-\frac{n-2}{2}}\frac{d^k}{dx^k}\Big[(1-x^2)^{k+\frac{n-2}{2}}\Big]\right\} \Big[(1-x^2)^{l+\frac{n-2}{2}}\Big]^{(l+n-1)} dx\\
       =&(-1)^{n-1}\int_{-1}^1 \Big[(n-2)x\frac{d^k}{dx^k}(1-x^2)^{k+\frac{n-2}{2}}\\
       &\qquad \qquad \quad+(1-x^2)\frac{d^{k+1}}{dx^{k+1}}(1-x^2)^{k+\frac{n-2}{2}}\Big]\Big[(1-x^2)^{l+\frac{n-2}{2}}\Big]^{(l+n-1)} dx\\
       :=&I+II.
    \end{align*}
    
    For $I$, since $x\Big[(1-x^2)^{l+\frac{n-2}{2}}\Big]^{(l+n-1)}$ is a polynomial of degree at most $1+l-1=l<k$, we have
    \begin{align*}
        I:=& (-1)^{n-1}\int_{-1}^1 (n-2)x\frac{d^k}{dx^k}(1-x^2)^{k+\frac{n-2}{2}}\Big[(1-x^2)^{l+\frac{n-2}{2}}\Big]^{(l+n-1)}dx\\
        =&(-1)^{n-1+k}(n-2)\int_{-1}^1(1-x^2)^{k+\frac{n-2}{2}}\frac{d^k}{dx^k}\Big(x\Big[(1-x^2)^{l+\frac{n-2}{2}}\Big]^{(l+n-1)}\Big) dx\\
        =&0.
    \end{align*}
    
    For $II$, noticing that $(1-x^2)\Big[(1-x^2)^{l+\frac{n-2}{2}}\Big]^{(l+n-1)}$ is a polynomial of degree at most $1+l<1+k$, we have, similarly
    \begin{align*}
        II:=& (-1)^{n-1}\int_{-1}^1 (1-x^2)\frac{d^{k+1}}{dx^{k+1}}(1-x^2)^{k+\frac{n-2}{2}}\Big[(1-x^2)^{l+\frac{n-2}{2}}\Big]^{(l+n-1)} dx\\
        =&(-1)^{n+k}\int_{-1}^1(1-x^2)^{k+\frac{n-2}{2}}\frac{d^{k+1}}{dx^{k+1}}\Big((1-x^2)\Big[(1-x^2)^{l+\frac{n-2}{2}}\Big]^{(l+n-1)}\Big) dx\\
        =&0.
    \end{align*}
    Therefore, the desired assertion follows by collecting these facts together.
\end{proof}

\begin{proof}[Proof of Lemma \ref{lem:GHX_2}:]
    Since both $(\mathcal{Q}u_k)^{(k)}$ and $u_k^{(k)}$ are constants, integrating by parts gives
    \begin{align*}
        \int_{-1}^1 \mathcal{Q}u_k(x) u_k(x)d\mu&= (-1)^k\int_{-1}^1 (\mathcal{Q}u_k)^{(k)} (1-x^2)^{k+\frac{n-2}{2}}dx\\
        &=(-1)^k\int_{-1}^1 \Big[(1-x^2)^\frac{n}{2}u_k'\Big]^{(k+n-1)} (1-x^2)^{k+\frac{n-2}{2}}dx\\
        &=(-1)^{k+\frac{n}{2}}\int_{-1}^1 (x^n u_k')^{(k+n-1)} (1-x^2)^{k+\frac{n-2}{2}}dx\\
        &=(-1)^{k+\frac{n}{2}}\binom{n+k-1}{n}n!\int_{-1}^1 u_k^{(k)} (1-x^2)^{k+\frac{n-2}{2}}dx\\
        &=(-1)^{\frac{n}{2}}\binom{n+k-1}{n} n!\int_{-1}^1 u^2_k(x)(1-x^2)^{\frac{n-2}{2}}dx\\
        &=\bar\lambda_k\int_{-1}^1 u^2_k(x)d\mu,
    \end{align*}
where third identity follows from the fact that $(1-x^2)^\frac{n}{2}u_k'$ is a polynomial of degree at most $k+n-1$.
\end{proof}

As an application of \eqref{formula:GHX}, we shall give an alternative proof of Theorem \ref{thm:meanfield}, namely, 
any axially symmetric smooth solutions to 
$$\frac{1}{n+1} P_n u=(n-1)!(e^{nu}-1) \qquad \mathrm{~~on~~} \quad \Sn$$
is necessarily 0.

\begin{proof}[Proof of Theorem \ref{thm:meanfield}] Write $\alpha:=\frac{1}{n+1}$, and let $u$ be a axially symmetric smooth solution of \eqref{eqn:MFE}. Without loss
of generality, we can assume that $u$ is a function of $x:=x_{n+1}$, with $x\in [-1,1]$. Now the round metric can be expressed as
$$g_{\Sn}=(1-x^2)^{-1} d x^2+(1-x^2) g_{\Sp^{n-1}}.$$
Recall \eqref{formula:GHX}:
$$P_n u=(-1)^{\frac{n}{2}}[(1-x^2)^{\frac{n}{2}}u']^{(n-1)}=(-1)^{\frac{n}{2}}[(1-x^2)^{\frac{n-2}{2}}U]^{(n-1)}$$
where $U=\langle \nabla x,\nabla u\rangle=(1-x^2)^{n/2}u'$.

Let $F_2(x)=1-(n+1)x^2$, then we multiply both sides of \eqref{eqn:MFE} by $F_2$ and integrate over $\Sn$ to show
\begin{align*}
\alpha \int_{\Sn} P_n u F_2 d V_{\Sn}=&(n-1)!\int_{\Sn}(e^{nu}-1)F_2 d V_{\Sn}\\
=&-\frac{(n-1)!}{2(n+1)}\int_{\Sn}(e^{nu}-1)\Delta_{\Sn} F_2 d V_{\Sn}\\
=&\frac{n!}{2(n+1)}\int_{\Sn}e^{nu}\langle \nabla F_2,\nabla u\rangle d V_{\Sn}\\
=&-n! \int_{\Sn}e^{nu} x U d V_{\Sn}\\
=&-n! \int_{\Sn}\left(\frac{\alpha}{(n-1)!}P_n u+1\right)x U d V_{\Sn}\\
=&-\alpha n \int_{\Sn}x U P_n u d V_{\Sn} -n! \int_{\Sn} x U d V_{\Sn}.
\end{align*}
For brevity, we set $F(x)=(1-x^2)^{\frac{n-2}{2}} U=(1-x^2)^{n-1}u'$. By \eqref{formula:GHX} we are ready to deal with
\begin{align*}
&-\alpha n \int_{\Sn}x U P_n u d V_{\Sn}\\
=&-\alpha n \int_{\Sn} x U (-1)^{\frac{n}{2}}[(1-x^2)^{\frac{n-2}{2}}U]^{(n-1)} d V_{\Sn}\\
=& -\alpha n (-1)^{\frac{n}{2}}|\Sp^{n-1}| \int_{-1}^1 F^{(n-1)}(x) x F(x) d x\\
=&-\alpha n |\Sp^{n-1}| \int_{-1}^1  F^{(\frac{n}{2}-1)}(x) [x F(x)]^{(\frac{n}{2})} d x\\
=&-\alpha n |\Sp^{n-1}| \int_{-1}^1  F^{(\frac{n}{2}-1)}(x)\left[\frac{n}{2} F^{(\frac{n}{2}-1)}(x)+x F^{(\frac{n}{2})}(x)\right] d x\\
=&-\alpha n |\Sp^{n-1}| \frac{n-1}{2} \int_{-1}^1  |F^{(\frac{n}{2}-1)}(x)|^2 d x,
\end{align*}
where the third identity follows from integrating by parts for $n/2$ times and using the following fact that since $u$ is a smooth solution by assumption, for $0 \leq k \leq (n-2)/2$ we have
\begin{equation}\label{est:bdry_terms}
F^{(k)}(x)=0 \qquad \mathrm{~~on~~}\quad x=\pm 1.
\end{equation}
The reason for \eqref{est:bdry_terms} is as follows: since $F(x)=(1-x^2)^{\frac{n}{2}}u'(x)$, it suffices to prove that $u^{(k)}$ is bounded near $\pm 1$ for any $k\in\mathbb{N}$. Take the point $-1$ for example, the case of $+1$ is the same. Note that locally we have
$x=-\sqrt{1-|x'|^2}$, where we write $(x_1,\dots,x_n)=:x'$, and $(x_1,\cdots,x_n)$ is a local coordinate system compatible with the differential structure of $\Sn$. So $u(x)$ is a smooth function near the point $(0,\cdots,0,-1)$ if and only if $u(-\sqrt{1-|x'|^2})$ is a smooth function of $x'$ near $x'=0$. Write $\phi(x'):=u(-\sqrt{1-|x'|^2})$, then it is smooth. We shall need the following lemma, which might be well known. For readers' convenience, we include a proof here:

\begin{lemma}
    Let $\phi\in C^\infty(\RR^n)$ be a radially symmetric function, then it is a smooth function of $|x|^2$. To be precise, we have $\phi(x)=v(|x|^2)$, where $v\in C^\infty[0,+\infty)$.
\end{lemma}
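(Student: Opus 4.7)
The plan is to reduce the statement to a classical one-variable fact: every smooth even function on $\RR$ can be written as a smooth function of $x^2$ on $[0,\infty)$. Since $\phi$ is radial, the map $x_1 \mapsto \phi(x_1,0,\ldots,0)$ is smooth on $\RR$ (as the restriction of a smooth function to an affine line) and even (by radial symmetry). If the one-variable claim produces a smooth $v$ such that $v(x_1^2)$ equals this restriction, then $\phi(x)=v(|x|^2)$ on all of $\RR^n$, because both sides are radial and already agree along one axis. So it suffices to prove: \emph{if $g\in C^\infty(\RR)$ is even, then $v(t):=g(\sqrt t)$, initially defined for $t>0$, extends to an element of $C^\infty[0,\infty)$.}

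The main tool is the elementary ``peeling lemma'': if $h\in C^\infty(\RR)$ vanishes at $0$, then
$$\tilde h(x)\;:=\;\int_0^1 h'(tx)\,dt$$
is smooth on $\RR$ and $h(x)=x\,\tilde h(x)$. In particular, dividing a smooth odd function by $x$ always returns a smooth even function. Applied iteratively to $g$, I set $H_0:=g$ and $H_{k+1}(x):=H_k'(x)/x$. A simple parity count (each derivative flips the parity, and each division by $x$ flips it back) shows that every $H_k$ is smooth and even on $\RR$.

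A short induction then gives the key identity
$$v^{(k)}(t)\;=\;2^{-k}\,H_k(\sqrt{\,t\,}),\qquad t>0.$$
Since each $H_k$ is continuous at $0$, the right-hand side extends continuously to $t=0$ with value $H_k(0)/2^k$; a one-sided mean-value argument (applied inductively, starting from the continuous extension of $v$ itself) then upgrades this continuous extension to the existence of all right-derivatives of $v$ at $0$, matching the prescribed values. Combined with the obvious smoothness on $(0,\infty)$, this yields $v\in C^\infty[0,\infty)$. The only step requiring a bit of care is the promotion from continuous extension of $v^{(k)}$ at $t=0$ to genuine one-sided differentiability, but this is routine once the derivative formula above is in hand; everything else is formal manipulation driven by the peeling lemma.
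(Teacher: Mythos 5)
Your proof is correct, and it reaches the same one-variable core fact as the paper (an even $g\in C^\infty(\RR)$ yields $g(\sqrt t)\in C^\infty[0,\infty)$) by a genuinely different execution, though both ultimately rest on the same peeling (Hadamard) lemma. Two differences are worth noting. First, your reduction to one variable restricts $\phi$ to a coordinate axis, whereas the paper passes through the spherical mean $\frac{1}{|\Sp^{n-1}|}\int_{\Sp^{n-1}}\phi(r\omega)\,dV(\omega)$; both produce a smooth even function of $r$ that determines $\phi$, and your version is the more economical of the two. Second, in the one-variable step the paper iterates the peeling lemma to write $\phi(r)=P_k(r^2)+r^{2k}\phi_{2k}(r)$ and then argues that, for $k$ large, $v^{(j)}(t)$ has a finite limit as $t\searrow 0$ for all $j\le m$; you instead iterate the operation $h\mapsto h'/x$ on even functions to obtain the exact identity $v^{(k)}(t)=2^{-k}H_k(\sqrt t)$ for $t>0$. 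Your exact formula is cleaner and avoids the ``choose $k$ large depending on $m$'' bookkeeping. You also explicitly flag the final upgrade from ``$v^{(k)}$ extends continuously to $t=0$'' to ``the one-sided derivatives of $v$ exist at $0$'' via the mean value theorem, a step the paper leaves implicit; it is indeed routine, but stating it is the right instinct, since continuity of a derivative on $(0,\infty)$ with a limit at $0$ does not by itself constitute differentiability at the endpoint without that argument. All the intermediate verifications check out: $H_k'$ vanishes at $0$ because $H_k$ is even, so each $H_{k+1}$ is well defined and smooth, and the induction giving $v^{(k)}(t)=2^{-k}H_k(\sqrt t)$ is a one-line chain rule computation.
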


\begin{proof}
    Since $\phi$ is radially symmetric, its spherical mean equals itself, namely
    $$\phi(x)=\frac{1}{|\Sp^{n-1}|r^{n-1}}\int_{\partial B_{r}}\phi d\sigma=\frac{1}{|\Sp^{n-1}|}\int_{\Sp^{n-1}}\phi(r\omega) dV_{\Sp^{n-1}}(\omega).$$
We define $\phi(r)$ for $r\in\RR$ by
$$\phi(r):=\frac{1}{|\Sp^{n-1}|}\int_{\Sp^{n-1}}\phi(r\omega) dV_{\Sp^{n-1}}(\omega),$$
then $\phi(r)\in C^\infty(\RR)$ and is even. It suffices to prove that for any $\phi(r)\in C^\infty(\RR)$ which is even, if we set 
$v(t):=\phi(\sqrt{t})$, then $v\in C^\infty[0,+\infty)$.

For this, we use the well known trick to write $\phi(r)=\phi(0)+r\int_0^1\phi'(sr)ds=:\phi(0)+r\phi_1(r)$, where $\phi_1\in C^\infty(\RR)$ is odd. Similarly, we have $\phi_1(r)=r\phi_2(r)$ where $\phi_2\in C^\infty(\RR)$ is even. Inductively, for any $k\in \N$, we can find a polynomial $P_k$ of degree at most $k$ and a smooth even function $\phi_{2k}$ such that
$$\phi(r)=P_k(r^2)+r^{2k}\phi_{2k}(r),$$
from which we get $v(t)=P_k(t)+t^k\phi_{2k}(\sqrt{t})$. For any given $m\in\N$, we can choose $k$ large enough in the above formula. Then it is easy to see that when $t\searrow 0$, any $v^{(j)}$ has a finite limit for $j=1,\cdots,m$. Since $m$ is arbitrary, this shows $v\in C^\infty[0,\infty)$.
\end{proof}

Given this lemma, for $\phi(x')$ as above, we can find a function $v(t)\in C^\infty[0,+\infty)$ such that $\phi(x')=v(|x'|^2)$. Then we have
$$u(x)=v(1-x^2),$$
and hence $u^{(k)}$ is bounded on $[-1,1]$ for any $k\in\N$.

On the other hand, using the fact that $P_n F_2=(n+1)! F_2$ we have
\begin{align*}
\alpha \int_{\Sn} P_n u F_2 d V_{\Sn}=&\alpha \int_{\Sn} u P_n F_2 d V_{\Sn}\\
=& \alpha (n+1)! \int_{\Sn} u F_2 d V_{\Sn}=\frac{\alpha}{2} n! \int_{\Sn} u (-\Delta_{\Sn} F_2) d V_{\Sn}\\
=&\frac{\alpha}{2} n! \int_{\Sn} \langle \nabla u, \nabla F_2\rangle d V_{\Sn}\\
=&-\alpha (n+1)! \int_{\Sn} x U d V_{\Sn}.
\end{align*}

Therefore, putting these facts together we obtain
\begin{equation*}
\alpha |\Sp^{n-1}| \frac{n-1}{2} \int_{-1}^1  |F^{(\frac{n}{2}-1)}(x)|^2 d x=(n-1)!\left((n+1)\alpha-1\right) \int_{\Sn} x U d V_{\Sn}=0.
\end{equation*}

This implies $F^{(\frac{n}{2}-1)}=0$, i.e. 
\begin{align*}
 [(1-x^2)^{n-1} u']^{(\frac{n}{2}-1)}=0, 
\end{align*}
which implies $(1-x^2)^{n-1} u'$ is a polynomial of degree at most $\frac{n}{2}-2$. However, 1 is a zero point of multiplicity at least $n-1>\frac{n}{2}-2$, this implies $u'\equiv 0$. So $u$ is a constant. Going back to equation \eqref{eqn:MFE}, this constant must be 0.    
\end{proof}

\section{Green function rigidity theorems}\label{sec:rigidity}

In this section, we study the spherical rigidity problem for the Green functions. To prove Theorems \ref{thm:rigidity}, we recall the following standard facts in the geometry of Riemannian submanifolds.

 Without loss of generality, we may assume that $Q=0\in\RR^{n+1}$ and  locally $M$ is given as the graph of a function $f:B_\delta(0)\to \RR$ with $f(0)=0$ and $\nabla f(0)=0$. Here $\nabla$ means the gradient with respect to the flat metric on $\RR^n$. Then under the coordinate chart $x=(x_1,\cdots, x_n)$, the induced metric on $M$ is  $g_{\alpha\beta}=\delta_{\alpha\beta}+f_\alpha f_\beta$, where $f_\alpha$ means $\frac{\partial f}{\partial x_\alpha}$. It is direct to check that the second fundamental form is given by $$II=\sum_{\alpha,\beta}h_{\alpha\beta}dx_\alpha\otimes dx_\beta=\frac{1}{\sqrt{1+|\nabla f|^2}}\sum_{\alpha,\beta}f_{\alpha\beta}dx_\alpha\otimes dx_\beta$$ and the mean curvature is $$H=\mathrm{tr}_g(II)=\frac{1}{\sqrt{1+|\nabla f|^2}}g^{\alpha \beta}f_{\alpha \beta}.$$  Denote by $\rho:=\|P-Q\|^2=|x|^2+f^2(x)$, then  a direct computation yields
    $$|\nabla^g\rho|_g^2=4\rho-\frac{4(f-x\cdot\nabla f)^2}{1+|\nabla f|^2},\quad \rho_{,\alpha\beta}=2g_{\alpha\beta}+\frac{2f_{\alpha\beta}(f-x\cdot\nabla f)}{1+|\nabla f|^2}$$
    and 
    $$\Delta_g \rho=2n+\frac{2(f-x\cdot\nabla f)}{1+|\nabla f|^2}g^{\alpha\beta}f_{\alpha\beta}=2n+\frac{2(f-x\cdot\nabla f)}{\sqrt{1+|\nabla f|^2}}H.$$
    Let
    \begin{equation}\label{eqn:eta}
    \eta=\frac{f-x\cdot\nabla f}{\sqrt{1+|\nabla f|^2}},
    \end{equation}
    which is the normal component of the position vector $(x,f(x))$, and usually called a `support function'. 
    Then the above identities become
    \begin{equation}\label{eqn:nabla_lap_rho}
      |\nabla^g\rho|_g^2=4\rho-4\eta^2,\quad \rho_{,\alpha\beta}=2g_{\alpha\beta}+2\eta h_{\alpha\beta},\quad \Delta_g\rho=2n+2\eta H.  
    \end{equation}
    
    We shall use the following lemma:
    \begin{lemma}\label{lem:lim}
       Let $v\in \RR^n$ be a unit vector, and let $\gamma(t):=(tv,f(tv))$ be the corresponding smooth curve on the hypersurface $(M,g)$. Then we have
       \begin{equation}\label{eqn:lim_eta}
           \lim_{t\to 0}\frac{\eta}{\rho}\big(\gamma(t)\big)=-\frac{1}{2}\sum_{\alpha,\beta} f_{\alpha\beta}(0)v_\alpha v_\beta=-\frac{1}{2}II(v,v),
       \end{equation}
       \begin{equation}\label{eqn:lim_nabla_eta}
           \lim_{t\to 0}\frac{\nabla^g\rho\cdot\nabla^g(f-x\cdot\nabla f)}{\rho}(\gamma(t))=-2II(v,v),
       \end{equation}
       \begin{equation}\label{eqn:lim_nabla_eta_sqr}
          \lim_{t\to 0}\frac{|\nabla^g(f-x\cdot\nabla f)|^2_g}{\rho}(\gamma(t))=\sum_{\alpha,\beta,\mu}f_{\mu\alpha}(0)f_{\mu\beta}(0)v_\alpha v_\beta=(II)^2(v,v),
       \end{equation}
       where we always identify\footnote{Note that at $Q$, it follows from our assumption $\nabla f(0)=0$ that $g_{\alpha\beta}=\delta_{\alpha\beta}$.} $v\in\RR^n$ with the unit tangent vector $\sum_\alpha v_\alpha\frac{\partial}{\partial x_\alpha}\in T_Q M$. 
       
    \end{lemma}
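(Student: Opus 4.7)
The plan is to prove all three limits by Taylor expanding every ingredient along the curve $\gamma(t) = (tv, f(tv))$ at $t = 0$, using the crucial normalizations $f(0) = 0$ and $\nabla f(0) = 0$. Under these assumptions, one immediately gets $g_{\alpha\beta}(0) = \delta_{\alpha\beta}$, $h_{\alpha\beta}(0) = f_{\alpha\beta}(0)$, so that $II(v,v) = \sum_{\alpha,\beta} f_{\alpha\beta}(0) v_\alpha v_\beta$ at $Q$. I would record once and for all the following expansions as $t \to 0$:
\begin{align*}
f(tv) &= \tfrac{t^2}{2}\, II(v,v) + O(t^3), \qquad \partial_\alpha f(tv) = t\sum_\beta f_{\alpha\beta}(0)v_\beta + O(t^2),\\
\rho(\gamma(t)) &= t^2|v|^2 + f(tv)^2 = t^2 + O(t^4), \qquad 1 + |\nabla f(tv)|^2 = 1 + O(t^2),
\end{align*}
together with $g^{\alpha\beta}(\gamma(t)) = \delta^{\alpha\beta} + O(t^2)$. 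The denominator $\rho$ is uniformly $t^2(1+o(1))$, so in each of the three quotients it suffices to extract the $t^2$-coefficient in the numerator.

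For \eqref{eqn:lim_eta}, I would compute $f - x\cdot\nabla f$ at $x = tv$ directly from the expansions above: the leading terms give $\tfrac{t^2}{2} II(v,v) - t\cdot v\cdot\nabla f(tv) = \tfrac{t^2}{2} II(v,v) - t^2 II(v,v) + O(t^3) = -\tfrac{t^2}{2} II(v,v) + O(t^3)$. Combined with $\sqrt{1+|\nabla f|^2} = 1 + O(t^2)$ in the denominator of $\eta$ and with $\rho = t^2 + O(t^4)$, one gets the asserted limit $-\tfrac{1}{2} II(v,v)$.

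For \eqref{eqn:lim_nabla_eta} and \eqref{eqn:lim_nabla_eta_sqr}, I would set $F := f - x\cdot\nabla f$ and use the identity $F_\beta = \partial_\beta f - \partial_\beta(\sum_\gamma x_\gamma f_\gamma) = -\sum_\gamma x_\gamma f_{\gamma\beta}$. Evaluating on the curve, $F_\beta(\gamma(t)) = -t\sum_\gamma v_\gamma f_{\gamma\beta}(0) + O(t^2)$, so $F_\beta = O(t)$. From $\rho_\alpha = 2 x_\alpha + 2 f f_\alpha$ we find $\rho_\alpha(\gamma(t)) = 2 t v_\alpha + O(t^3)$. Then
\[
\nabla^g\rho\cdot\nabla^g F = g^{\alpha\beta}\rho_\alpha F_\beta = \sum_\alpha \rho_\alpha F_\alpha + O(t^3) = -2 t^2\, II(v,v) + O(t^3),
\]
and dividing by $\rho = t^2 + O(t^4)$ yields \eqref{eqn:lim_nabla_eta}. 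Similarly,
\[
|\nabla^g F|_g^2 = g^{\alpha\beta} F_\alpha F_\beta = \sum_\alpha F_\alpha^2 + O(t^3) = t^2 \sum_{\alpha,\beta,\mu} f_{\mu\alpha}(0) f_{\mu\beta}(0) v_\alpha v_\beta + O(t^3),
\]
and dividing by $\rho$ produces \eqref{eqn:lim_nabla_eta_sqr}; the right-hand side is $(II)^2(v,v)$ because at $Q$ the raising of an index uses $g^{\mu\nu} = \delta^{\mu\nu}$ and $h = f_{\alpha\beta}$.

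There is no real obstacle here; the argument is purely a Taylor expansion with careful bookkeeping. The only point requiring a moment's attention is that at the base point the metric is Euclidean, so the factors $g^{\alpha\beta}$, $\sqrt{1+|\nabla f|^2}$ and $1/(1+|\nabla f|^2)$ all contribute only $O(t^2)$ corrections and can safely be replaced by $1$ when extracting the $t^2$-coefficient in each numerator.
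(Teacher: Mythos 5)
Your proposal is correct and follows essentially the same route as the paper's proof: Taylor expansion of $f$, $\nabla f$, $\rho$, $\rho_\alpha$, and $\partial_\alpha(f-x\cdot\nabla f)=-\sum_\gamma x_\gamma f_{\gamma\alpha}$ along $\gamma(t)$, with $g^{\alpha\beta}=\delta^{\alpha\beta}+O(t^2)$ absorbing the metric corrections. The bookkeeping and the extraction of the $t^2$-coefficients match the paper's computation.
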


    \begin{proof}
        Since $f(tv)=\frac{t^2}{2}\sum_{\alpha,\beta}f_{\alpha\beta}(0)v_\alpha v_\beta+o(t^2)$, and
        $$f_\alpha(tv)=t\sum_\beta f_{\alpha\beta}(0)v_\beta+o(t), $$
        we obtain
        $$(x\cdot\nabla f)(tv)=\sum_\alpha tv_\alpha f_\alpha(tv)=t^2\sum_{\alpha,\beta}f_{\alpha\beta}(0)v_\alpha v_\beta+o(t^2)$$
        and 
        $$|\nabla f|^2(tv)=O(t^2).$$
        So we obtain
        \begin{align*}
          \lim_{t\to 0}\frac{\eta}{\rho}\big(\gamma(t)\big)&=\lim_{t\to 0}\frac{-\frac{t^2}{2}\sum_{\alpha,\beta}f_{\alpha\beta}(0)v_\alpha v_\beta+o(t^2)}{t^2+o(t^2)}=-\frac{1}{2}\sum_{\alpha,\beta}f_{\alpha\beta}(0)v_\alpha v_\beta. 
        \end{align*}
        Also, we have
        $$\partial_\alpha\rho (tv)=2tv_\alpha-2f(tv)f_\alpha(tv)=2tv_\alpha+O(t^2),$$
        $$\partial_\alpha(f-x\cdot\nabla f)(tv)=(-\sum_\beta x_\beta f_{\alpha\beta})(tv)=-t\sum_\beta v_\beta f_{\alpha\beta}(0)+O(t^2),$$
        and 
        $$g^{\alpha\beta}(\gamma(t))=\Big(\delta_{\alpha\beta}-\frac{f_\alpha f_\beta}{1+|\nabla f|^2}\Big)(\gamma(t))=\delta_{\alpha\beta}+O(t^2),$$
        from which we easily obtain \eqref{eqn:lim_nabla_eta} and \eqref{eqn:lim_nabla_eta_sqr}.
    \end{proof}

\subsection{Laplacian on surfaces}\label{subsec:rigidity_surface}

\begin{proof}[Proof of Theorem \ref{thm:rigidity}(1)]
     Recall that $\rho=\|P-Q\|^2$, so $G(P,Q)=-\frac{1}{4\pi}\log\rho+C$. Now the equation for $G$ becomes
    $$-\Delta_g (-\frac{1}{4\pi}\log\rho+C)=-\frac{1}{V},$$
    so we get
    $$\Delta_g\log\rho=-c$$
    for some positive constant $c=\frac{4\pi}{V}$. Direct computation together with \eqref{eqn:nabla_lap_rho} gives
    \begin{align}\label{eqn:Green_surface}
        -c=&\Delta_g\log\rho=\frac{\Delta_g\rho}{\rho}-\frac{|\nabla^g\rho|_g^2}{\rho^2}\nonumber\\
        =&\frac{1}{\rho}\Big(4+2\eta H\Big)-\frac{1}{\rho^2}\Big(4\rho-4\eta^2\Big)=\frac{2\eta H}{\rho}+\frac{4\eta^2}{\rho^2}.
    \end{align}
    Taking limit along the curve $\gamma(t)$ with $ \gamma'(0)=v$, $|v|=1$ and using \eqref{eqn:lim_eta}, we get
    $$-H(Q)II(v,v)+II(v,v)^2=-c.$$
In particular, if $\sum_\beta v_\beta\frac{\partial}{\partial x_\beta}$ is in the principal direction at $Q$ with principal curvature $\kappa_\alpha$, then we get
    $$-\kappa_\alpha H(Q)+\kappa_\alpha^2=-c,\quad \alpha=1,2.$$
    Summing over $\alpha$, we obtain, at $Q$:
    $$2K(Q)=H(Q)^2-|II(Q)|^2=2c.$$
This yields that the Gauss curvature of $(M,g)$ is the positive constant $c$. Consequently, $M$ must be isometric to a round sphere and hence also congruent to a round sphere.    
\end{proof}

\subsection{Conformal Laplacian}\label{subsec:conf_lap}
It is known in \cite{Lee-Parker} that the Green function of conformal Laplacian exists whenever $(M,g)$ is of positive Yamabe constant.
\begin{proof}[Proof of Theorem \ref{thm:rigidity}(2):]
The key point is the following lemma:

\begin{lemma}\label{lem:umbilical}
    Fix $Q \in M$. If $P_{2,P}^g \|P-Q\|^{2-n}=0$ for all $P\in M, P\neq Q$, then $Q$ is an umbilical point.
\end{lemma}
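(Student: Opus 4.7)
The plan is to compute the conformal Laplacian of $u(P) := \|P-Q\|^{2-n} = \rho^{(2-n)/2}$ directly using the formulas $|\nabla^g \rho|_g^2 = 4\rho - 4\eta^2$ and $\Delta_g \rho = 2n + 2\eta H$ from \eqref{eqn:nabla_lap_rho}, and then read off the umbilical condition by taking a limit to $Q$ along curves. Writing $\alpha = (2-n)/2$ and applying the chain rule, one has
\begin{equation*}
\Delta_g u = \alpha\,\rho^{\alpha-2}\bigl[\rho\,\Delta_g\rho + (\alpha-1)|\nabla^g\rho|^2_g\bigr].
\end{equation*}
Substituting the formulas for $\Delta_g\rho$ and $|\nabla^g\rho|^2_g$, the pure $\rho$-terms cancel because of the specific value of $\alpha$, and one obtains
\begin{equation*}
-\Delta_g u = (n-2)\,\rho^{-n/2-1}\,\eta\,\bigl(\rho H + n\eta\bigr).
\end{equation*}
The equation $P_2^g u = -\Delta_g u + \tfrac{n-2}{4(n-1)} R_g u = 0$ then simplifies, after multiplying through by $\rho^{n/2+1}/(n-2)$, to the pointwise identity
\begin{equation*}
\eta\,\rho\, H + n\eta^2 + \frac{R_g\,\rho^2}{4(n-1)} = 0 \qquad \text{on } M \setminus \{Q\}.
\end{equation*}

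Next, I would divide this identity by $\rho^2$ and pass to the limit along the smooth curve $\gamma(t) = (tv, f(tv))$ as $t \to 0^+$, where $v \in T_Q M$ is an arbitrary unit vector. Invoking \eqref{eqn:lim_eta} of Lemma \ref{lem:lim} together with continuity of $H$ and $R_g$ at $Q$ yields the quadratic identity
\begin{equation*}
\frac{n}{4}\,II(v,v)^2 - \frac{H(Q)}{2}\,II(v,v) + \frac{R_g(Q)}{4(n-1)} = 0
\end{equation*}
for every unit $v \in T_Q M$.

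Finally, I would argue that this quadratic constraint forces all principal curvatures at $Q$ to coincide. In principal-direction coordinates at $Q$, $II(v,v) = \sum_\alpha \kappa_\alpha v_\alpha^2$ is a convex combination of the principal curvatures $\kappa_1,\ldots,\kappa_n$, so as $v$ ranges over the unit sphere in $T_Q M$ the value $II(v,v)$ sweeps out the whole interval $[\kappa_{\min}, \kappa_{\max}]$. If $Q$ were not umbilical this interval would be nondegenerate, and every element of it would be a root of a fixed quadratic polynomial with nonzero leading coefficient $n/4$, which is impossible. Hence $\kappa_1 = \cdots = \kappa_n$ at $Q$ and the lemma follows. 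I do not anticipate a substantial obstacle; the only care required is bookkeeping in the computation of $\Delta_g \rho^{(2-n)/2}$ and ensuring that the scalar curvature term contributes cleanly (i.e. is bounded) in the limit, which holds since $R_g$ is smooth.
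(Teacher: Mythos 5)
Your proposal is correct and follows essentially the same route as the paper: the same computation of $P_2^g\rho^{1-\frac{n}{2}}$ via \eqref{eqn:nabla_lap_rho} and the same limit along $\gamma(t)=(tv,f(tv))$ using \eqref{eqn:lim_eta} yield the identical quadratic identity in $II(v,v)$. The only difference is the endgame: the paper substitutes $R_g=H^2-|II|^2$, sums over the principal directions to get $H^2(Q)=n|II(Q)|^2$ and concludes by the equality case of Cauchy--Schwarz, whereas you observe that a quadratic with nonzero leading coefficient cannot vanish on the whole interval $[\kappa_{\min},\kappa_{\max}]$; both are valid, and your version has the mild advantage of not needing the Gauss equation at this step.
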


\begin{proof}
 We use the previous conventions. Recall that by the Gauss equation we have $R_g=H^2-|II|^2$. Then we get
    \begin{equation*}
       P_2^g \rho^{1-\frac{n}{2}}=(n-2)\rho^{1-\frac{n}{2}}\Big[\frac{n\eta^2}{\rho^2}+\frac{\eta H}{\rho}+\frac{H^2-|II|^2}{4(n-1)}\Big].
    \end{equation*}
    By our assumption that $P_2^g \rho^{1-\frac{n}{2}}=0$ for all $P \neq Q$, we arrive at the following equation
    \begin{equation}\label{eqn:green_eqn}
       \frac{n\eta^2}{\rho^2}+\frac{\eta H}{\rho}+\frac{H^2-|II|^2}{4(n-1)}=0.
    \end{equation}
Again, taking limit along  $\gamma(t):=(tv, f(tv))$  with $|v|=1$, together with \eqref{eqn:lim_eta} we have
    \begin{align*}
        \frac{n}{4}II(v,v)^2-\frac{H(Q)}{2}II(v,v)+\frac{H^2-|II|^2}{4(n-1)}(Q)=0.
    \end{align*}
    As in the surface case, if $\sum_\beta v_\beta\frac{\partial}{\partial x_\beta}$ is in the principal direction at $Q$ with principal curvature $\kappa_\alpha$, then we get
    $$\frac{n}{4}\kappa^2_\alpha-\frac{1}{2}\kappa_\alpha H(Q)+\frac{H^2-|II|^2}{4(n-1)}(Q)=0,\qquad \forall~ \alpha.$$
    Summing over $\alpha$, we obtain, at $Q$:
    $$\frac{n}{4}|II|^2-\frac{1}{2}H^2+\frac{n(H^2-|II|^2)}{4(n-1)}=0,$$
    or equivalently $H^2(Q)=n|II|^2(Q)$. That is, $(\sum_\alpha\kappa_\alpha)^2=n\sum_\alpha\kappa_\alpha^2$. This forces $\kappa_1=\dots=\kappa_n$ and hence $Q$ is umbilical.        
\end{proof}

Now the theorem follows from the fact that a closed embedded umbilical hypersurface in $\RR^{n+1}$ is isometric to a round sphere. In fact, we already have $II=\frac{H}{n}g$. Let $\{e_\alpha\}$ be an orthonormal frame and $\{\omega^\alpha\}$ be its dual coframe. Write $II=h_{\alpha\beta}\omega^\alpha\otimes\omega^\beta$, then taking covariant derivatives to show $\frac{H_{,\gamma}}{n}g_{\alpha\beta}=\nabla_\gamma h_{\alpha\beta}$, and hence
\begin{align*}
  \frac{H_{,\alpha}}{n}&=g^{\beta\gamma}\frac{H_{,\gamma}}{n}g_{\alpha\beta}=g^{\beta\gamma}\nabla_\gamma h_{\alpha\beta}=g^{\beta\gamma}\nabla_\alpha h_{\beta\gamma}=H_{,\alpha}, 
\end{align*}
where we have used the Codazzi equation. This implies $H_{,\alpha}=0,\forall \alpha$ and hence $H$ is constant. Thus the Alexandrov theorem \cite{Alexandrov} yields that $M$ must be a round sphere.  
\end{proof}

\subsection{Paneitz operator}

We now consider the Paneitz operator, the fourth order conformally covariant operator introduced by Paneitz \cite{Paneitz}, 
\begin{equation}\label{def:Paneitz_operator}
        P_4^g \psi=\Delta^2_g\psi+\frac{4}{n-2}\div\big(\sum_\alpha \mathrm{Ric}(\nabla^g\psi,e_\alpha)e_\alpha\big)-\frac{n^2-4n+8}{2(n-1)(n-2)}\div(R_g\nabla^g\psi)+\frac{n-4}{2}Q_g\psi,
        \end{equation}
        where $\{e_\alpha\}$ is a local orthonormal frame, and the Branson $Q$-curvature is defined by
        $$Q_g=-\frac{1}{2(n-1)}\Delta_g R_g+\frac{n^3-4n^2+16n-16} {8(n-1)^2(n-2)^2} R_g^2-\frac{2}{(n-2)^2}|\mathrm{Ric}|_g^2.$$

We start with the non-critical case. 

\begin{proof}[Proof of Theorem \ref{thm:rigidity}(3):]
    We follow the same idea as Theorem \ref{thm:rigidity}(2). The key point is also the following lemma:
    \begin{lemma}
       If $P_4^g \|P-Q\|^{4-n}=0$ for all $P\neq Q$, then $Q$ is an umbilical point.
    \end{lemma}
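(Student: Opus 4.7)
The plan is to follow the strategy of Lemma \ref{lem:umbilical}, which proved the analogous statement for the conformal Laplacian. Setting $\psi := \rho^{(4-n)/2} = \|\cdot-Q\|^{4-n}$, I will compute $P_4^g\psi$ directly using \eqref{eqn:nabla_lap_rho}, analyse its leading behaviour as $P\to Q$ along a smooth curve, and deduce umbilicity by varying the tangent direction at $Q$.

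The first step is to obtain an explicit expression for $P_4^g\psi$ on $M\setminus\{Q\}$. A direct calculation using \eqref{eqn:nabla_lap_rho} should produce $\Delta_g\psi$ as $\rho^{(2-n)/2}$ times a polynomial of degree two in $\eta/\rho$ with coefficients involving $H$. Iterating yields $\Delta_g^2\psi$ as $\rho^{-n/2}$ times a polynomial of degree four in $\eta/\rho$, with coefficients involving $H$, $\nabla H$, $|II|^2$, and $\eta$-related derivatives controlled by \eqref{eqn:lim_nabla_eta}--\eqref{eqn:lim_nabla_eta_sqr}. The three lower-order parts of $P_4^g$ in \eqref{def:Paneitz_operator}, namely $\div(\Ric(\nabla^g\psi,\cdot))$, $\div(R_g\nabla^g\psi)$, and the $Q_g$ multiplication, add further singular contributions of order at most $\rho^{-n/2}$ with coefficients assembled from $\Ric$, $R_g$, and $Q_g$. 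Combining, the equation $P_4^g\psi=0$ takes the schematic form
$$\mathcal{F}\!\left(\tfrac{\eta}{\rho};\,H,|II|^2,R_g,\Ric,Q_g\right)=0 \qquad \text{on } M\setminus\{Q\},$$
where $\mathcal F$ is polynomial of degree four in $\eta/\rho$.

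Next, letting $P\to Q$ along $\gamma(t)=(tv,f(tv))$ with $|v|=1$ and invoking Lemma \ref{lem:lim}, $\eta/\rho$ limits to $-\tfrac{1}{2}II(v,v)$ while the auxiliary gradient quantities limit to $II(v,v)$ and $(II)^2(v,v)$. Hence the identity $\mathcal F=0$ passes to a polynomial identity in the single variable $II(v,v)$, valid for every unit $v\in T_QM$, whose remaining coefficients depend only on the intrinsic geometry of $M$ at $Q$. Specialising $v$ to a principal direction with curvature $\kappa_\alpha$ and using the hypersurface identities $R_g=H^2-|II|^2$ and $\Ric(v,v)=H\kappa_\alpha-\kappa_\alpha^2$, the relation becomes a quartic in $\kappa_\alpha$ whose coefficients depend only on $H(Q)$, $|II|^2(Q)$, and power sums of the principal curvatures. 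Summing over $\alpha$, together with the weighted sums obtained by pre-multiplying by $\kappa_\alpha$, should, by elementary-symmetric-polynomial manipulations paralleling Lemma \ref{lem:umbilical}, collapse to $H^2=n|II|^2$, forcing $\kappa_1=\cdots=\kappa_n$ at $Q$.

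The main obstacle is the brute-force computation of $\Delta_g^2\psi$ and the careful bookkeeping of cross terms from the two divergence parts of $P_4^g$; once $\mathcal F$ is assembled, the algebraic step converting it to umbilicity should closely parallel the conformal Laplacian argument. The hypothesis $n\neq 4$ enters here naturally, since $\rho^{(4-n)/2}\equiv 1$ when $n=4$ and the quartic structure above degenerates, which is why part (4) of Theorem \ref{thm:rigidity} requires a separate logarithmic treatment.
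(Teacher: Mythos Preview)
Your plan matches the paper's proof: multiply $P_4^g\psi$ by $\rho^{\frac{n}{2}-1}$, take the limit along $\gamma(t)=(tv,f(tv))$ using Lemma \ref{lem:lim}, and then trace over principal directions to obtain $H^2=n|II|^2$ at $Q$. Two points where you overestimate the complexity: first, although intermediate expressions for $\Delta_g^2\psi$ contain $\eta^4$-terms, these come with an extra factor of $\rho$ after normalising by $\rho^{\frac{n}{2}-1}$ and therefore vanish in the limit; the surviving identity is \emph{quadratic} in $\kappa_\alpha$ (with terms $II(v,v)^2$, $(II)^2(v,v)$, $H\cdot II(v,v)$, $\mathrm{Ric}(v,v)$, $H^2$, $R_g$), not quartic. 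Second, because the identity is quadratic, a single unweighted trace over $\alpha$ already collapses it to $H^2=n|II|^2$; no pre-multiplication by $\kappa_\alpha$ or higher power-sum manipulations are needed.
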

    \begin{proof}
        The proof follows the same line as in Section \ref{subsec:conf_lap}. Let $\psi(\rho):=\rho^{2-\frac{n}{2}}$. 
        Then as before, we take an arbitrary unit vector $v\in\RR^n$ and study the limit of $\rho^{\frac{n}{2}-1}P_4^g\psi(\rho)$ along  $\gamma(t):=(tv,f(tv))$  as $t\to 0$.

         Clearly, $\rho^{\frac{n}{2}-1}Q_g\psi(\rho)=Q_g\rho\to 0$ as $t\to 0$. It suffices to consider the first three terms in the above formula for $P_4^g \psi(\rho)$. Since
        \begin{equation}
          \Delta_g \psi(\rho)=(n-4)\Big[-2\rho^{1-\frac{n}{2}}-\rho^{1-\frac{n}{2}}\eta H-(n-2)\rho^{-\frac{n}{2}} \eta^2\Big],
        \end{equation}
        we can write $\Delta_g^2\psi(\rho)=(n-4)(A+B+C)$, where
        \begin{align*}
        A:=&-2\Delta_g \rho^{1-\frac{n}{2}}=2(n-2)\rho^{1-\frac{n}{2}}\Big(\frac{n\eta^2}{\rho^2}+\frac{\eta H}{\rho}\Big),\\
        B:=&-\Delta_g\Big(\rho^{1-\frac{n}{2}}\eta H\Big),\\
        C:=&-(n-2)\Delta_g\Big(\rho^{-\frac{n}{2}} \eta^2\Big).
        \end{align*}
        By \eqref{eqn:lim_eta}, we have
        \begin{equation}\label{eqn:A_lim}
            \lim_{t\to 0}\rho^{\frac{n}{2}-1}A(\gamma(t))=2(n-2)\Big[\frac{n}{4}II(v,v)^2-\frac{1}{2}H(Q)II(v,v)\Big].
        \end{equation}
        For $B$, we have
        \begin{align*}
            B&=-\Delta_g(\rho^{1-\frac{n}{2}})\eta H-\rho^{1-\frac{n}{2}}\Delta_g(\eta H)+(n-2)\rho^{-\frac{n}{2}}\nabla^g\rho\cdot\nabla^g(\eta H).
        \end{align*}
        Since $\eta H=O(|x|^2)$, we conclude that
        \begin{align*}
            \lim_{t\to 0}\rho^{\frac{n}{2}-1}B(\gamma(t))&=0-H(Q)\Delta_g(f-x\cdot\nabla f)(Q)\\
            &\quad +(n-2)H(Q)\lim_{t\to 0}\frac{\nabla^g\rho\cdot\nabla^g(f-x\cdot\nabla f)}{\rho}(\gamma(t)).
        \end{align*}
        On one hand, we have
        \begin{align*}
            \Delta_g(f-x\cdot\nabla f)(Q)&=g^{\alpha\beta}(0)\frac{\partial^2}{\partial x_\alpha\partial x_\beta}(f-x\cdot\nabla f)(0)\\
            &=\delta^{\alpha\beta}(-f_{\alpha\beta}(0))=-H(Q).
        \end{align*}
        Combining with \eqref{eqn:lim_nabla_eta}, we obtain
        \begin{equation}\label{eqn:B_lim}
             \lim_{t\to 0}\rho^{\frac{n}{2}-1}B(\gamma(t))=H^2(Q)-2(n-2)H(Q)II(v,v).
        \end{equation}
        For term $C$, we have
        \begin{align*}
             \frac{\rho^{\frac{n}{2}-1}C}{n-2}&=-\rho^{\frac{n}{2}-1}\Delta_g(\rho^{-\frac{n}{2}})\eta^2-\rho^{-1}\Delta_g(\eta^2)+n\rho^{-2}\nabla^g\rho\cdot\nabla^g(\eta^2).
        \end{align*}
        Since
        \begin{align*}
            \Delta_g\rho^{-\frac{n}{2}}&=-\frac{n}{2}\rho^{-\frac{n}{2}-1}\Delta_g\rho+\frac{n(n+2)}{4}\rho^{-\frac{n}{2}-2}|\nabla^g\rho|_g^2\\
            &=-\frac{n}{2}\rho^{-\frac{n}{2}-1}\Big(2n+2\eta H\Big)+\frac{n(n+2)}{4}\rho^{-\frac{n}{2}-2}\Big(4\rho-4\eta^2\Big)\\
            &=2n\rho^{-\frac{n}{2}-1}-n\rho^{-\frac{n}{2}-1}\eta H-n(n+2)\rho^{-\frac{n}{2}-2}\eta^2,
        \end{align*}
        we have $\lim_{t\to 0}\rho^{\frac{n}{2}+1}\Delta_g\rho^{-\frac{n}{2}}=2n$,
        and hence
        $$-\lim_{t\to 0}\rho^{\frac{n}{2}-1}\Delta_g(\rho^{-\frac{n}{2}})\eta^2(\gamma(t))=-2n\cdot\frac{1}{4}II(v,v)^2=-\frac{n}{2}II(v,v)^2.$$
        Also, using \eqref{eqn:lim_nabla_eta_sqr} and \eqref{eqn:lim_nabla_eta}, we have
        \begin{align*}
            -\lim_{t\to 0}\rho^{-1}\Delta_g(\eta^2)(\gamma(t))&=-\lim_{t\to 0}\frac{\Delta_g(f-x\cdot\nabla f)^2}{\rho}(\gamma(t))\\
            &=-\lim_{t\to 0}\frac{2(f-x\cdot\nabla f)\Delta_g(f-x\cdot\nabla f)+2|\nabla(f-x\cdot\nabla f)|_g^2}{\rho}\\
            &=-H(Q)II(v,v)-2\sum_{\alpha,\beta,\mu}f_{\mu\alpha}(0)f_{\mu\beta}(0)v_\alpha v_\beta\\
            &=-H(Q)II(v,v)-2(II)^2(v,v)
        \end{align*}
        and
        \begin{align*}
            \lim_{t\to 0}n\rho^{-2}\nabla^g\rho\cdot\nabla^g\Big(\eta^2\Big)(\gamma(t))&=\lim_{t\to 0}\frac{2n(f-x\cdot \nabla f)}{\rho}\frac{\nabla^g\rho\cdot\nabla^g(f-x\cdot \nabla f)}{\rho}\\
            &=2nII(v,v)^2.
        \end{align*}
        So we arrive at
        \begin{equation}\label{eqn:C_lim}
           \frac{1}{n-2}\lim_{t\to 0}\rho^{\frac{n}{2}-1}C(\gamma(t))=\frac{3n}{2}II(v,v)^2-2(II)^2(v,v)-H(Q)II(v,v). 
        \end{equation}
        
       Therefore, combining \eqref{eqn:A_lim}, \eqref{eqn:B_lim} and \eqref{eqn:C_lim} we obtain
       \begin{equation}\label{eqn:bilap_lim}
           \lim_{t\to 0}\frac{\rho^{\frac{n}{2}-1}\Delta_g^2\psi(\rho)}{(n-2)(n-4)}(\gamma(t))=2nII(v,v)^2-2(II)^2(v,v)-4H(Q)II(v,v)+\frac{H(Q)^2}{n-2}.
       \end{equation}

       We next compute the limit of the remaining two divergence terms, which are
       \begin{align*}
         &\quad  \frac{4}{n-2}\div\big(\sum_\alpha \mathrm{Ric}(\nabla^g\psi(\rho),e_\alpha)e_\alpha\big)-\frac{n^2-4n+8}{2(n-1)(n-2)}\div(R_g\nabla^g\psi(\rho))\\
         &=\frac{4}{n-2}\big(R^\alpha_\beta\psi_\alpha\big)_,^\beta-\frac{n^2-4n+8}{2(n-1)(n-2)}\big(\nabla^gR_g\cdot\nabla^g\psi+R_g\Delta_g\psi\big)\\
         &=\frac{4}{n-2}\big(R^\alpha_\beta\psi'(\rho)\rho_\alpha\big)_,^\beta-\frac{n^2-4n+8}{2(n-1)(n-2)}\big(\psi'(\rho)\nabla^gR_g\cdot\nabla^g\rho+R_g\Delta_g\psi\big)\\
         &=\frac{4}{n-2}\Big(R^{\alpha\ \beta}_{\beta,}\psi'(\rho)\rho_\alpha+\psi''(\rho)\mathrm{Ric}(\nabla^g\rho,\nabla^g\rho)+\psi'(\rho)\langle \mathrm{Ric},\mathrm{Hess}_g\rho\rangle\Big)\\
         &\quad -\frac{n^2-4n+8}{2(n-1)(n-2)}\big(\psi'(\rho)\nabla^gR_g\cdot\nabla^g\rho+R_g\Delta_g\psi\big)\\
         &=\Big(\frac{2}{n-2}-\frac{n^2-4n+8}{2(n-1)(n-2)}\Big)\psi'(\rho)\nabla^gR_g\cdot\nabla^g\rho+\frac{4\psi'(\rho)}{n-2}\langle \mathrm{Ric},\mathrm{Hess}_g\rho\rangle \\
         &\quad+
         \frac{4\psi''(\rho)}{n-2}\mathrm{Ric}(\nabla^g\rho,\nabla^g\rho)-\frac{n^2-4n+8}{2(n-1)(n-2)}R_g\Delta_g\psi(\rho)\\
         &=-\frac{n-6}{2(n-1)}\psi'(\rho)\nabla^gR_g\cdot\nabla^g\rho+\frac{4\psi'(\rho)}{n-2}\langle \mathrm{Ric},\mathrm{Hess}_g\rho\rangle+
         \frac{4\psi''(\rho)}{n-2}\mathrm{Ric}(\nabla^g\rho,\nabla^g\rho)\\
         &\quad-\frac{n^2-4n+8}{2(n-1)(n-2)}R_g\Delta_g \psi(\rho).
       \end{align*}
       This implies that
       \begin{align*}
           &\lim_{t\to 0}\rho^{\frac{n}{2}-1}\Big( \frac{4}{n-2}\div\big(\sum_\alpha \mathrm{Ric}(\nabla^g\psi(\rho),e_\alpha)e_\alpha\big)-\frac{n^2-4n+8}{2(n-1)(n-2)}\div(R_g\nabla^g\psi(\rho))\Big)(\gamma(t))\\
           &=0+\frac{4}{n-2}(-\frac{n-4}{2})\langle \mathrm{Ric},\mathrm{Hess}_g\rho\rangle(0)+(n-4)\lim_{t\to 0}\frac{\mathrm{Ric}(\nabla^g\rho,\nabla^g\rho)}{\rho}\\
           &\quad -\frac{n^2-4n+8}{2(n-1)(n-2)}R_g(Q)\lim_{t\to 0}\big(-2(n-4)+O(|x|^2)\big)(\gamma(t))\\
           &=-\frac{2(n-4)}{n-2}R_g(Q)+4(n-4)\mathrm{Ric}(v,v)+\frac{(n-4)(n^2-4n+8)}{(n-1)(n-2)}R_g(Q)\\
           &=\frac{(n-4)(n-6)}{n-1}R_g(Q)+4(n-4)\mathrm{Ric}(v,v).
       \end{align*}
       Combining with \eqref{eqn:bilap_lim}, we obtain
       \begin{align*}
      &2nII(v,v)^2-2(II)^2(v,v)-4H(Q)II(v,v)\\
      &+\frac{H(Q)^2}{n-2}+\frac{n-6}{(n-1)(n-2)}R_g(Q)+\frac{4}{n-2}\mathrm{Ric}(v,v)=0
      \end{align*}
      for any unit vector $v$. Replacing $v$ by every principal direction in an orthonormal basis of $T_QM$, and taking trace, we obtain
      $$2(n-1)|II(Q)|^2-4H(Q)^2+\frac{n}{n-2}H(Q)^2+\Big(\frac{n(n-6)}{(n-1)(n-2)}+\frac{4}{n-2}\Big)R_g(Q)=0,$$
      that is,
      $$2(n-1)|II(Q)|^2-\frac{3n-8}{n-2}H(Q)^2+\frac{n^2-2n-4}{(n-1)(n-2)}R_g(Q)=0.$$
      Using the fact that $R_g=H^2-|II|^2$, we obtain
      $$\frac{n(2n^2-9n+12)}{(n-1)(n-2)}|II(Q)|^2-\frac{2n^2-9n+12}{(n-1)(n-2)}H(Q)^2=0$$
      and hence $H(Q)^2=n|II(Q)|^2$, which implies that $Q$ is umbilical.
    \end{proof}
The remaining part of proof is identical to that of Theorem \ref{thm:rigidity}(2).
\end{proof}

Now we prove the corresponding theorem for $P_4^g$ on a closed 4-manifold.

\begin{proof}[Proof of Theorem \ref{thm:rigidity}(4)]
    The proof is almost the same as Theorem \ref{thm:rigidity}(3), so we shall be brief here. Recall that in dimension four, the Paneitz operator becomes
    $$P_4^g u=\Delta_g^2 u-\frac{2}{3}R_g\Delta_g u+2\langle \mathrm{Ric}, \mathrm{Hess}_g u\rangle+\frac{1}{3}\nabla^g R_g\cdot\nabla^g u.$$
    Write $\phi(\rho)=\log\rho$, then it follows from our assumption that
    $$P_4^g \phi(\rho)=c_0$$
    for a constant $c_0$ when $\rho\neq 0$. Then we have
    $$\lim_{t\to 0}\big[\rho P_4^g \phi(\rho)\big](\gamma(t))=0$$
    for  $\gamma(t)$ as before.
    
    Now we compute the above limit directly.
    First we have
    $$\Delta_g\phi(\rho)=\frac{\Delta_g\rho}{\rho}-\frac{|\nabla^g\rho|_g^2}{\rho^2}=\frac{4}{\rho}+\frac{2H\eta}{\rho}+\frac{4\eta^2}{\rho^2}.$$
    Then we compute $\Delta_g^2\phi(\rho)$ term by term:
    \begin{align*}      
    \Delta_g\Big(\frac{4}{\rho}\Big)&=4\Big(-\frac{\Delta_g\rho}{\rho^2}+\frac{2|\nabla^g\rho|_g^2}{\rho^3}\Big)\\
    &=-\frac{4}{\rho^2}(8+2H\eta)+\frac{8}{\rho^3}(4\rho-4\eta^2)\\
    &=-\frac{8H\eta}{\rho^2}-\frac{32\eta^2}{\rho^3};\\
        \Delta_g\Big(\frac{2H\eta}{\rho}\Big)&=\Delta_g\frac{1}{\rho}(2H\eta)+\frac{1}{\rho}\Delta_g(2H\eta)+2\nabla^g\frac{1}{\rho}\cdot\nabla^g(2H\eta)\\
        &=2H\eta\Big(-\frac{2H\eta}{\rho^2}-\frac{8\eta^2}{\rho^3}\Big)+\frac{1}{\rho}\Delta_g(2H\eta)-\frac{4}{\rho^2}\nabla^g\rho\cdot\nabla^g(H\eta)\\
        &=-\frac{4H^2\eta^2}{\rho^2}-\frac{16H\eta^3}{\rho^3}+\frac{1}{\rho}\Delta_g(2H\eta)-\frac{4}{\rho^2}\nabla^g\rho\cdot\nabla^g(H\eta)
    \end{align*}
    and
    \begin{align*}
        \Delta_g\Big(\frac{4\eta^2}{\rho^2}\Big)&=4\Delta_g\frac{1}{\rho^2}\eta^2+\frac{4}{\rho^2}\Delta_g\eta^2+8\nabla^g\frac{1}{\rho^2}\cdot\nabla^g\eta^2\\
        &=-\frac{8\eta^2}{\rho^3}\Delta_g\rho+\frac{24\eta^2}{\rho^4}|\nabla^g\rho|_g^2+\frac{8}{\rho^2}(\eta\Delta_g\eta+|\nabla\eta|_g^2)-\frac{32\eta}{\rho^3}\nabla^g\rho\cdot\nabla^g\eta.
    \end{align*}

 Based on these formulae, we compute
    \begin{align*}
        \lim_{t\to 0}\rho\Delta_g\Big(\frac{4}{\rho}\Big)(\gamma(t))&=-8H(Q)(-\frac{1}{2}II(v,v))-32(-\frac{1}{2}II(v,v))^2\\
        &=4H(Q)II(v,v)-8II(v,v)^2;\\
        \lim_{t\to 0}\rho\Delta_g\Big(\frac{2H\eta}{\rho}\Big)(\gamma(t))&=0+0+2H(Q)\Delta_g\eta(0)-4H(Q)\lim_{t\to 0}\frac{\nabla^g\rho\cdot\nabla^g\eta}{\rho}\\
        &=2H(Q)(-H(Q))-4H(Q)(-2II(v,v))\\
        &=-2H(Q)^2+8H(Q)II(v,v)
    \end{align*}
    and
     \begin{align*}
        \lim_{t\to 0}\rho\Delta_g\Big(\frac{4\eta^2}{\rho^2}\Big)(\gamma(t))&=-8\cdot\frac{1}{4}II(v,v)^2\Delta_g\rho(0)+24\cdot\frac{1}{4}II(v,v)^2\cdot 4\\
        &\quad+8\cdot(-\frac{1}{2}II(v,v))\Delta_g\eta(0)+8(II)^2(v,v)\\
        &\quad-32(-\frac{1}{2}II(v,v))(-2II(v,v))\\
        &=-24II(v,v)^2+8(II)^2(v,v)+4H(Q)II(v,v).
    \end{align*}
    Combining the above formulae, we obtain
    \begin{equation}\label{eqn:bilap_lim4d}
        \lim_{t\to 0}\rho\Delta_g^2\phi(\rho)(\gamma(t))=-32II(v,v)^2+8(II)^2(v,v)+16H(Q)II(v,v)-2H(Q)^2.
    \end{equation}
    
    Next, we compute
    \begin{equation}\label{eqn:R_laplim}
     \lim_{t\to 0}\rho\Big(-\frac{2}{3}R_g\Delta_g\phi\Big)(\gamma(t))=-\frac{2}{3}R_g(Q)\cdot 4=-\frac{8}{3}R_g(Q),   
    \end{equation}
    and
    \begin{equation}\label{eqn:nablaRlim}
        \lim_{t\to 0}\rho\cdot\frac{1}{3}\nabla^gR_g\cdot\nabla^g\phi=\frac{1}{3}\nabla^gR_g(0)\cdot\nabla^g\rho(0)=0.
    \end{equation}
    
    Finally, we have
    \begin{align}\label{eqn:Riclim}
        \lim_{t\to 0}\rho\cdot 2\langle \mathrm{Ric},\mathrm{Hess}_g\phi\rangle(\gamma(t))=&\lim_{t\to 0}2\rho \Big(\frac{\langle \mathrm{Ric},\mathrm{Hess}_g\rho\rangle}{\rho}-\frac{\mathrm{Ric}(\nabla^g \rho, \nabla^g \rho)}{\rho^2}\Big)\nonumber\\
        =&4R_g(Q)-8\mathrm{Ric}(v,v).
    \end{align}
    
    Combining \eqref{eqn:bilap_lim4d}, \eqref{eqn:R_laplim}, \eqref{eqn:nablaRlim} and \eqref{eqn:Riclim}, we obtain
    \begin{align*}
         -32II(v,v)^2+8(II)^2(v,v)+16H(Q)II(v,v)-2H(Q)^2+\frac{4}{3}R_g(Q)-8\mathrm{Ric}(v,v)=0.
    \end{align*}
    Taking trace, we obtain
    \begin{equation*}
        -24|II(Q)|^2+8H(Q)^2-\frac{8}{3}R_g(Q)=0.
    \end{equation*}
    Since $R_g=H^2-|II|^2$, we again obtain $H(Q)^2=4|II(Q)|^2$ and hence $Q$ is umbilical. Then we follow the same route as before.
\end{proof}

\section{Strong rigidity in low dimensions}\label{sec:strong rigidity}

With the help of the Positive Mass Theorem we now prove the strong rigidity conjecture for conformal Laplacian when $n=3,4,5$.

\begin{proof}[Proof of Theorem \ref{thm:rigidity_PMT}:] We first prove that $(M^n,g)$ is globally conformal to the round sphere $\Sn$.

\begin{lemma}\label{lem:AF coordinates}
    The manifold $(M\setminus\{Q\}, \rho^{-2}g)$ is asymptotically flat of order $2$ with respect to the coordinates $y:=x|x|^{-2}$. Moreover, its scalar curvature vanishes identically.
\end{lemma}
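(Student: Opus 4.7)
The plan is to split Lemma \ref{lem:AF coordinates} into two essentially independent claims: (a) the scalar curvature of $\rho^{-2}g$ vanishes on $M\setminus\{Q\}$, and (b) the metric $\rho^{-2}g$ is asymptotically flat of order $2$ in the coordinates $y=x/|x|^2$.

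For (a) the idea is to exploit the conformal covariance of $P_2^g$. By assumption $G(\cdot,Q)=c_{n,1}\rho^{1-n/2}$ is the Green function of $P_2^g$, so $P_2^g G\equiv 0$ on $M\setminus\{Q\}$. The standard transformation rule
$$\frac{4(n-1)}{n-2}\,P_2^g\phi = R_{\phi^{4/(n-2)}g}\,\phi^{(n+2)/(n-2)}$$
applied to $\phi=G$ then gives $R_{G^{4/(n-2)}g}\equiv 0$ on $M\setminus\{Q\}$. Since $G^{4/(n-2)}$ differs from $\rho^{-2}$ only by the positive constant $c_{n,1}^{4/(n-2)}$ and a constant rescaling preserves scalar-flatness, the metric $\rho^{-2}g$ is scalar-flat on $M\setminus\{Q\}$.

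For (b) I would work in the local chart near $Q$ in which $M$ is the graph of $f:B_\delta(0)\to\RR$ with $f(0)=0$ and $\nabla f(0)=0$, so that $g_{\alpha\beta}=\delta_{\alpha\beta}+f_\alpha f_\beta$ and $\rho(x)=|x|^2+f(x)^2$. The inverted chart $y=x/|x|^2$ maps a punctured neighborhood of $Q$ onto the complement of a ball at infinity in $\RR^n$. Since $f(x)=O(|x|^2)$ by Taylor expansion at $0$, in the new variables $f(x(y))=O(|y|^{-2})$, so
$$\rho(x(y))=|y|^{-2}+O(|y|^{-4}),\qquad \rho^{-2}(x(y))=|y|^4+O(|y|^2).$$
Using the well-known conformal transformation of the Euclidean metric under inversion, $\delta_{\alpha\beta}\,dx^\alpha dx^\beta=|y|^{-4}|dy|^2$, together with the estimate $\tilde f_\mu:=f_\alpha\,\partial x^\alpha/\partial y^\mu=O(|y|^{-3})$ (which follows from $f_\alpha=O(|x|)=O(|y|^{-1})$ and $\partial x/\partial y=O(|y|^{-2})$), one obtains $g_{\mu\nu}(y)=|y|^{-4}\delta_{\mu\nu}+O(|y|^{-6})$, and hence
$$(\rho^{-2}g)_{\mu\nu}(y)=\delta_{\mu\nu}+O(|y|^{-2}).$$

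The main obstacle I anticipate is the careful bookkeeping needed to verify the matching decay of derivatives, namely $\partial_\gamma(\rho^{-2}g)_{\mu\nu}=O(|y|^{-3})$ and $\partial_\gamma\partial_\lambda(\rho^{-2}g)_{\mu\nu}=O(|y|^{-4})$, as required by asymptotic flatness of order $2$. Differentiating the Taylor expansion of $f$ and chaining through the inversion produces these bounds, but one must track the cancellations carefully using $f(x)=\tfrac{1}{2}f_{\alpha\beta}(0)x^\alpha x^\beta+O(|x|^3)$ and its derivatives. The scalar-flatness half of the lemma is otherwise essentially immediate from the conformal covariance of $P_2^g$, and so the entire proof reduces to the chain-rule computation above.
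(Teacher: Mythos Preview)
Your proposal is correct and follows essentially the same route as the paper: conformal covariance of $P_2^g$ for the scalar-flatness, and a direct chain-rule computation in the inverted chart $y=x/|x|^2$ using $f(x)=O(|x|^2)$, $f_\alpha(x)=O(|x|)$ for the asymptotic flatness of order $2$. One small remark: no genuine cancellations are needed for the derivative decay $\partial^k(\rho^{-2}g)_{\mu\nu}=O(|y|^{-2-k})$; it follows by straight degree-counting, since each $\partial_y$ lowers the order of every building block by one --- the finer Taylor expansion $f(x)=\tfrac{1}{2}f_{\alpha\beta}(0)x^\alpha x^\beta+O(|x|^3)$ is only needed later, in the mass computation of Lemma~\ref{lem:AF mass}.
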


\begin{proof}
    Write $\hat g:=\rho^{-2}g$, then in a neighborhood of $Q$, we have
    \begin{align*}
        \hat g&= (|x|^2+f^2(x))^{-2}\sum_{\alpha,\beta}(\delta_{\alpha\beta}+f_\alpha f_\beta)dx_\alpha\otimes dx_\beta\\
        &=\Big(\frac{1}{|y|^2}+f^2\big(\frac{y}{|y|^2}\big)\Big)^{-2}\sum_{\alpha,\beta}\Big(\delta_{\alpha\beta}+f_\alpha\big(\frac{y}{|y|^2}\big) f_\beta\big(\frac{y}{|y|^2}\big)\Big)\cdot\\
        &\qquad \Big(\frac{dy_\alpha}{|y|^2}-\sum_\mu\frac{2y_\alpha y_\mu}{|y|^4}dy_\mu\Big)\otimes \Big(\frac{dy_\beta}{|y|^2}-\sum_\nu\frac{2y_\beta y_\nu}{|y|^4}dy_\nu\Big)\\
        &=\Big(1+|y|^2f^2\big(\frac{y}{|y|^2}\big)\Big)^{-2}\sum_{\alpha,\beta}\Big(\delta_{\alpha\beta}+f_\alpha\big(\frac{y}{|y|^2}\big) f_\beta\big(\frac{y}{|y|^2}\big)\Big)\cdot\\
        &\qquad \Big(dy_\alpha-\sum_\mu\frac{2y_\alpha y_\mu}{|y|^2}dy_\mu\Big)\otimes \Big(dy_\beta-\sum_\nu\frac{2y_\beta y_\nu}{|y|^2}dy_\nu\Big)\\
        &=:\sum_\alpha dy_\alpha\otimes dy_\alpha+\sum_{\alpha,\beta}h_{\alpha\beta}dy_\alpha\otimes dy_\beta.
    \end{align*}
    Making use of the fact that $f(x)=O(|x|^2)$, $f_\alpha(x)=O(|x|)$, it is not hard to see that $h_{\alpha\beta}=O(|y|^{-2})$, $\partial h_{\alpha\beta}=O(|y|^{-3})$ and $\partial^2 h_{\alpha\beta}=O(|y|^{-4})$. So $(M\setminus\{Q\}, \hat g)$ is asymptotically flat of order $2$ under the asymptotic coordinates $y$.

    Since $\rho^{-2}$ is $G(\cdot,Q)^{\frac{4}{n-2}}$ up to a constant factor, the conformal covariance of conformal Laplacian shows that the scalar curvature of $\hat g$ is identically $0$.
\end{proof}

It is well known that if the order $\tau$ of an asymptotically flat $n$-manifold satisfies $\tau>\frac{n-2}{2}$, then the mass is well defined and independent of the asymptotic coordinates. Now $\tau=2$ in our case, so the mass is well defined when $n=3,4,5$.

\begin{lemma}\label{lem:AF mass}
    When $n=3,4,5$, the mass of $(M\setminus\{Q\},\hat g)$ is $0$.
\end{lemma}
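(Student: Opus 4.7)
The plan is to apply the Positive Mass Theorem to $(M \setminus \{Q\}, \hat g)$ and couple it with a direct asymptotic computation of the mass.

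First, I would verify that the ADM mass $m(\hat g)$ is finite and coordinate-independent. Lemma \ref{lem:AF coordinates} produces asymptotically flat coordinates of order $\tau = 2$ with identically vanishing scalar curvature, and since $\tau = 2 > (n-2)/2$ holds precisely for $n \in \{3,4,5\}$, Bartnik's criterion guarantees $m(\hat g) \in \RR$ in these dimensions. Schoen--Yau's Positive Mass Theorem then gives $m(\hat g) \ge 0$, with equality if and only if $(M \setminus \{Q\}, \hat g) \cong (\RR^n, \delta)$.

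Second, I would sharpen the expansion of $\hat g$ from the proof of Lemma \ref{lem:AF coordinates}. Substituting the Taylor expansion $f(x) = \tfrac12 f_{\alpha\beta}(0)\, x_\alpha x_\beta + O(|x|^3)$ into the explicit formula for $\hat g_{\mu\nu}$ yields
$$\hat g_{\mu\nu}(y) = \delta_{\mu\nu} + |y|^{-2} F_{\mu\nu}(y/|y|) + O(|y|^{-3}),$$
where $F_{\mu\nu}$ is a degree-$0$ homogeneous symmetric tensor built from the second fundamental form $II$ of $M$ at $Q$. Following the idea of Remark \ref{rmk:coordinate change}, I would then choose a vector field $V(y) = O(|y|^{-1})$ solving $\partial_\mu V_\nu + \partial_\nu V_\mu = -|y|^{-2} F_{\mu\nu}$ at top order; this reduces to a solvable elliptic system on the sphere $S^{n-1}$. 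Pulling back by the diffeomorphism $y \mapsto y + V(y)$ promotes $\hat g$ to asymptotic order $\tau = 3$ without changing the ADM mass.

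Third, in the new coordinates the mass integrand $(\partial_j \hat g_{ij} - \partial_i \hat g_{jj})\nu^i$ is $O(|y|^{-4})$, so its integral over $\{|y| = R\}$ has size $O(R^{n-5})$. For $n = 3$ and $n = 4$ this is $o(1)$ as $R \to \infty$, immediately giving $m(\hat g) = 0$. The $n = 5$ case is the delicate one: the integrand is only bounded, and vanishing of the limit requires that the angular integral of the leading $O(|y|^{-4})$ piece of the mass integrand is zero. I would establish this either by exploiting the scalar-flat identity $R(\hat g) = 0$ to rewrite the leading angular integrand as a tangential divergence on $S^{n-1}$, so that Stokes' theorem kills it, or by using parity properties of $F_{\mu\nu}$ inherited from those of $f_{\alpha\beta}(0)$.

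The main obstacle is the $n = 5$ case, where the cancellation is not forced by decay rates alone and one genuinely needs to extract a geometric identity from scalar-flatness. A fallback would be to iterate the gauge change of the second step to push $\hat g$ to order $\tau = 4$, which would kill the mass integral for all $n \le 5$ at once, at the cost of verifying a higher-order cancellation in the explicit expansion of $\hat g$. Either way, once $m(\hat g) = 0$ is proved, the PMT rigidity clause gives the isometry with $(\RR^n, \delta)$ needed to continue the proof of Theorem \ref{thm:rigidity_PMT}.
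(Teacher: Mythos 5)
Your overall framework (asymptotic computation of the ADM mass plus decay counting) is reasonable, but as written the argument has a genuine gap precisely at $n=5$, which you yourself flag and then do not close: you offer two speculative strategies rather than a proof, and the second one is misdirected, since after your gauge change the surviving $O(|y|^{-3})$ part of $\hat g$ is governed by the \emph{cubic} Taylor coefficients $f_{\alpha\beta\gamma}(0)$ of the graph function (the tensor $A$ in the paper's notation), not by $f_{\alpha\beta}(0)$ or your $F_{\mu\nu}$, which is supposed to have been removed. A second, related omission: you never invoke the umbilicity of $Q$ (Lemma \ref{lem:umbilical}), which the paper's proof uses at the very first step. This matters for your second step as well: the equation $\partial_\mu V_\nu+\partial_\nu V_\mu=-|y|^{-2}F_{\mu\nu}$ is an overdetermined (Killing-type) system, and a general degree-$(-2)$ homogeneous symmetric tensor need not be pure gauge. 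The coordinate change of order $3$ alluded to in Remark \ref{rmk:coordinate change} is the radial map $z=\exp(-\tfrac{H^2}{4n^2|y|^2})y$, and it works exactly because umbilicity forces $F_{\mu\nu}$ into the special form $-\tfrac{H^2}{2n^2}|y|^{-2}\delta_{\mu\nu}+\tfrac{H^2}{n^2}|y|^{-4}y_\mu y_\nu$. Without establishing umbilicity first, your gauge-fixing step is unjustified.

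The paper takes a more elementary route that avoids both issues: using $f_{\alpha\beta}(0)=\tfrac{H}{n}\delta_{\alpha\beta}$ it expands $\hat g_{\alpha\beta}$ through order $|y|^{-3}$ in the original inverted coordinates and plugs directly into the Lee--Parker mass formula \eqref{eqn:mass_formula}. The two terms $\partial_r(\hat g_{rr}-\sum_\alpha\hat g_{\alpha\alpha})$ and $r^{-1}(n\hat g_{rr}-\sum_\alpha\hat g_{\alpha\alpha})$ cancel identically at orders $|y|^{-3}$ and $|y|^{-4}$ (the latter being the contribution of $A$), so the integrand is $O(|y|^{-5})$ and the mass is $\lim_{r\to\infty}O(r^{n-6})=0$ for $n\le 5$; no coordinate improvement and no separate treatment of $n=5$ is needed. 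If you want to rescue your version, you should (i) first record that $Q$ is umbilical, (ii) justify the order-$3$ coordinate change via the explicit radial map rather than an abstract solvability claim, and (iii) for $n=5$ actually carry out the parity argument: the residual $O(|y|^{-3})$ perturbation is an odd function of $y/|y|$, so the $O(|y|^{-4})$ mass integrand is odd on $\Sp^{n-1}$ and integrates to zero.
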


\begin{proof}
    By Lemma \ref{lem:umbilical}, $Q$ is umbilical. Simplify $H$ for $H(Q)$ and $A$ for the homogeneous polynoimial $\frac{1}{6}\sum_{\alpha\beta\gamma}f_{\alpha\beta\gamma}(0)x_\alpha x_\beta x_\gamma$. Then we have $f_{\alpha\beta}(0)=\frac{H(Q)}{n}\delta_{\alpha\beta}$, whence 
    $$f(x)=\frac{1}{2}\sum_{\alpha,\beta}f_{\alpha\beta}(0)x_\alpha x_\beta+A+O(|x|^4)=\frac{H}{2n}|x|^2+A+O(|x|^4)$$
    and
    $$f_\alpha(x)=\sum_\beta f_{\alpha\beta}(0) x_\beta+\pa_\alpha A+O(|x|^3)=\frac{H}{n}x_\alpha+\pa_\alpha A+O(|x|^3).$$
    Consequently,
    \begin{align*}
       \hat g_{\alpha\beta}=&\Big(1-\frac{H^2}{2n^2}\frac{1}{|y|^2}-\frac{2HA}{n}+O(|y|^{-4})\Big)\\
       &\cdot \Big(\delta_{\alpha\beta}+\frac{H^2}{n^2}\frac{y_\alpha y_\beta}{|y|^4}-\frac{H}{n}(y^\alpha\frac{\pa A}{\pa y^\beta}+y^\beta\frac{\pa A}{\pa y^\alpha})+O(|y|^{-4})\Big),
    \end{align*}
    where $O(|y|^{-4})$ is a term whose $k$-th order derivative decays at the rate of $|y|^{-4-k}$ for any $k\geq 0$. Then we get
    \begin{align}\label{eqn:g_trace}
        \sum_\alpha \hat g_{\alpha\alpha}&=\Big(1-\frac{H^2}{2n^2}\frac{1}{|y|^2}-\frac{2HA}{n}+O(|y|^{-4})\Big)\cdot \Big(n+\frac{H^2}{n^2}\frac{1}{|y|^2}+\frac{6H}{n}A+O(|y|^{-4})\Big),
    \end{align}
    where we used Euler's formula for the homogeneous function $A$. Denote by $r=|y|$ and $\frac{\pa}{\pa r}=\sum_\alpha\frac{y^\alpha}{|y|}\frac{\pa}{\pa y^\alpha}$, and $\hat g_{rr}:=\hat g(\frac{\pa}{\pa r},\frac{\pa}{\pa r})$. Then we have, by similar computation, that
    \begin{align}\label{eqn:g_rr}
        \hat g_{rr}&=\Big(1-\frac{H^2}{2n^2}\frac{1}{|y|^2}-\frac{2HA}{n}+O(|y|^{-4})\Big)\cdot \Big(1+\frac{H^2}{n^2}\frac{1}{|y|^2}+\frac{6H}{n}A+O(|y|^{-4})\Big).
    \end{align}
    Now recall the following formula of the ADM mass (see, e.g. Lee-Parker \cite[Equation (9.6) on p.79]{Lee-Parker})
    \begin{equation}\label{eqn:mass_formula}
        m(\hat g)=\lim_{r\to\infty}\int_{|y|=r} \Big(\pa_r(\hat g_{rr}-\sum_\alpha\hat g_{\alpha\alpha})+r^{-1}(n\hat g_{rr}-\sum_\alpha \hat g_{\alpha\alpha})\Big)d\sigma.
    \end{equation}
   By \eqref{eqn:g_trace} and \eqref{eqn:g_rr}, we get
   \begin{align*}
      \hat g_{rr}-\sum_\alpha\hat g_{\alpha\alpha}=-(n-1)\Big(1-\frac{H^2}{2n^2}\frac{1}{|y|^2}-\frac{2HA}{n}+O(|y|^{-4})\Big),
   \end{align*}
   and consequently
   \begin{equation}\label{eqn:mass_1st}
     \pa_r(\hat g_{rr}-\sum_\alpha\hat g_{\alpha\alpha})=-(n-1)\Big(\frac{H^2}{n^2}\frac{1}{|y|^3}+\frac{6HA}{n}\frac{1}{|y|}+O(|y|^{-5})\Big).  
   \end{equation}
   Also, we have
   \begin{align}\label{eqn:mass_2nd}
     \nonumber r^{-1}(n\hat g_{rr}-\sum_\alpha \hat g_{\alpha\alpha})&=(n-1)\Big(1-\frac{H^2}{2n^2}\frac{1}{|y|^2}-\frac{2HA}{n}+O(|y|^{-4})\Big)\\ 
     \nonumber&\quad\cdot\Big(\frac{H^2}{n^2}\frac{1}{|y|^3}+\frac{6HA}{n}\frac{1}{|y|}+O(|y|^{-5})\Big)\\
     &=(n-1)\Big(\frac{H^2}{n^2}\frac{1}{|y|^3}+\frac{6HA}{n}\frac{1}{|y|}+O(|y|^{-5})\Big).
   \end{align}
   Combining \eqref{eqn:mass_formula},\eqref{eqn:mass_1st} and \eqref{eqn:mass_2nd}, we obtain
   $$m(\hat g)=\lim_{r\to\infty}\int_{|y|=r}O(|y|^{-5})d\sigma=\lim_{r\to\infty}O(r^{n-6})=0,$$
   where we used the assumption $n=3,4,5$ in the last equality.
    \end{proof}

\begin{remark}\label{rmk:coordinate change}
\begin{enumerate}[(i)]
    \item  In the proof of Lemmas \ref{lem:AF coordinates} and \ref{lem:AF mass} we actually prove the following more general result: \textit{If $M^n$ is an embedded closed hypersurface in $\RR^{n+1}$ and $Q\in M$ is an umbilical point. Let $\rho$ be the function $\|\cdot-Q\|^2$ , then $(M\setminus\{Q\}, \rho^{-2}g)$ is asymptotically flat of order 2, and the mass is 0 when $n=3,4,5$.}
    \item If we employ a further asymptotic coordinate change $z=\exp\Big(-\frac{H^2(Q)}{4n^2 |y|^2}\Big)y$, then $(M\setminus\{Q\},\hat g)$ is asymptotically flat of order 3 with respect to $z$. So the mass is well defined even for $n=6,7$. However, it is difficult  to figure out the exact value of its mass when $n>5$.
\end{enumerate}
\end{remark}

Applying the positive mass theorem relying on the work of Schoen and Yau \cite{SchYau79}, we conclude that $(M\setminus\{Q\},\hat g)$ is isometric to the flat Euclidean space $\RR^n$, and hence $(M,g)$ is conformal to the round sphere $\Sn$.\\ 
    
    Next we prove that $(M,g)$ is in fact {\em isometric} to a round sphere.

Using the inverse map of the isometry $$(M\setminus\{Q\},\hat g)\to (\RR^n, g_{\RR^n}),$$ we obtain a smooth embedding $F: \RR^n \to \RR^{n+1}$ satisfying 
$$F^*g_{\RR^{n+1}}=|F|^4 g_{\RR^n} \qquad \mathrm{~~in~~} \RR^n$$
with the normalization that $\lim_{|y|\to\infty}F(y)=0$. Our purpose is to show that the hypersurface $F$ must be a round sphere minus one point.

Now we use the inversion map in $\RR^{n+1}$, and set 
$$U(y)=\frac{F(y)}{|F(y)|^2}.$$
Then it is direct to check that $U$ satisfies $$\partial_\alpha U \cdot \partial_\beta U= \delta_{\alpha \beta}, \quad 1 \leq \alpha,\beta \leq n$$ in $\RR^n$. This implies that the hypersurace $U$ is a complete flat hypersurface in $\RR^{n+1}$. The Hartman-Nirenberg theorem in \cite{Hartman-Nirenberg} shows that the hypersurace $U$ must be a cylinder over a regular plane curve. Explicitly, (after a rigid motion in $\RR^{n+1}$) the  hypersurface $U$ has a parametrization of the form
$$U(t,z)=(x(t),y(t),z), \qquad t \in \RR,~~z \in \RR^{n-1},$$
where $t \mapsto (x(t),y(t))$ is  a regular plane curve parametrized by arc length $t$. Here we abuse some notations $x,y$ for a moment. We comment that an alternative proof of the above result for $n=2$ is due to Massey \cite{Massey}.

Under the coordinates $(t,z) \in \RR^n$, the hypersurface $F$ becomes
$$F(t,z)=\frac{(x(t),y(t),z)}{x(t)^2+y(t)^2+|z|^2}.$$
We next compute the principal curvatures of the hypersurface $F$. To this end, a direct computation yields
\begin{align*}
\pa_t F=&\frac{(x',y',0)}{x^2+y^2+|z|^2}-\frac{2(xx'+yy')}{x^2+y^2+|z|^2}F,\\
\pa_{z_i}F=&\frac{\mathbf{e}_{i+2}}{x^2+y^2+|z|^2}-\frac{2z_i}{x^2+y^2+|z|^2} F 
\end{align*}
with the property that $\langle \pa_t F,\pa_{z_i}F \rangle=0$ for all $1\leq i \leq n-1$.
It is not hard to see that the unit normal vector field is given by
$$\mathbf{n}=(-y',x',0)+2(xy'-x'y)F.$$

Notice that
$$\pa_{z_i}\mathbf{n}=2(xy'-x'y)\pa_{z_i}F ,\qquad 1 \leq i \leq n-1,$$
which implies that $\lambda=-2(xy'-x'y)$ is a principal curvature of multiplicity at least $n-1$.
So, $\pa_t F$ must be a principal direction of another principal curvature $\mu$, that is,
$\mu \pa_t F=- \pa_t \mathbf{n}$. Now
\begin{align*}
\pa_t \mathbf{n}=(-y'',x'',0)+2(xy'-x'y)' F +2(xy'-x'y)\pa_t F.
\end{align*}
This in turn implies that $(-y'',x'',0)+2(xy'-x'y)' F$ is parallel to $\pa_t F$. Equivalently, there is a function $k=k(t,z)$ such that 
$$(-y'',x'',0)+2(xy'-x'y)' F=-k[(x',y',0)-2(xx'+yy')F]=-k(x^2+y^2+|z|^2) \pa_t F.$$
Solving the above equation gives $k=k(t)=x'y''-x''y'$, which is exactly the relative curvature of the plane curve, i.e.  $(x'',y'')=k(t)(-y',x')$. Therefore, putting these facts together we obtain
\begin{equation}\label{principal_curvature:simple}
\mu=\lambda-k(t)(x^2+y^2+|z|^2).
\end{equation}

Finally, we have already known that $Q$ is umbilical. Then it follows that  $\lambda-\mu$ goes to zero for all $(t,z)$ with $t^2+|z|^2 \to \infty$. Subsequently, for each fixed $t$, letting $|z| \to \infty$ in \eqref{principal_curvature:simple} forces $k(t)=0$ for all $t \in \RR$, and hence $(x(t),y(t)), t \in \RR$ is a straight line. Therefore, the hypersurface $U$ is a flat Euclidean space $\RR^n$, and then $(M,g)$ must be a round sphere.
\end{proof}

Finally, we give an evidence to support our conjecture for surfaces:

\begin{theorem}\label{thm:rigidity_analytic}
 Let $M^2$ be a closed real analytic surface in $\RR^3$ with induced metric $g$. Suppose there is a point $Q\in M$ such that the Green function of $g$ has the form $G(P,Q)=-\frac{1}{2\pi}\log\|P-Q\|+C$ for any  $P\in M$ and some $C\in \RR$. Assume additionally that the surface is $\Sp^1$-invariant about $Q$. Then $M$ is a round sphere.   
\end{theorem}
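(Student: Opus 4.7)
The strategy is to use rotational symmetry to turn the Green function hypothesis into a single analytic ODE along the profile curve of $M$, and then match $M$ coefficient-by-coefficient against an explicit round sphere.

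Since $Q$ is a fixed point of the $\Sp^1$-action, the rotation axis passes through $Q$ orthogonally to $T_Q M$. Place $Q$ at the origin with the axis along the $x_3$-axis, and write $M$ near $Q$ as the surface of revolution generated by $z=\phi(r)$, where $\phi$ is real-analytic with $\phi(0)=\phi'(0)=0$. With $\rho=r^2+\phi^2$, $\eta=(\phi-r\phi')/\sqrt{1+\phi'^2}$, and
$$H=\frac{\phi''}{(1+\phi'^2)^{3/2}}+\frac{\phi'}{r\sqrt{1+\phi'^2}},$$
equation \eqref{eqn:Green_surface} translates the Green function hypothesis into the ODE
$$\frac{2\eta H}{\rho}+\frac{4\eta^2}{\rho^2}+c=0,\qquad c:=\frac{4\pi}{V}>0.$$
Smoothness at the pole gives $\phi(r)=\sum_{k\geq 1}e_k r^{2k}$, and a direct check shows that the sphere $S_R$ of radius $R:=1/\sqrt{c}$ centered at $(0,0,R)$, given by $\phi_{\mathrm{sph}}(r)=R-\sqrt{R^2-r^2}$, satisfies the same ODE with the same initial data.

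To conclude, I would establish uniqueness of the analytic solution by a Taylor-coefficient recursion. The $r^0$-coefficient of the ODE yields $e_1^2=c/4$, and the inward-normal convention selects $e_1=\sqrt{c}/2$, matching $\phi_{\mathrm{sph}}$. For each $j\geq 1$, expanding $\eta/\rho$ and $H$ in powers of $r$, the only linear-in-$e_{j+1}$ contributions come from $\phi''$ and $\phi'/r$ (through $H$) and from $\phi-r\phi'$ (through $\eta$); after cancellations they assemble into a coefficient $-8e_1(j+1)^2\neq 0$ multiplying $e_{j+1}$ in the $r^{2j}$-equation, while everything else depends only on $e_1,\ldots,e_j$. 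Hence $e_{j+1}$ is inductively and uniquely determined, so $\phi\equiv\phi_{\mathrm{sph}}$ as analytic functions near $r=0$. Because $M$ and $S_R$ are closed connected real-analytic surfaces agreeing on an open set, they coincide globally, and $M$ is the round sphere of radius $1/\sqrt{c}$.

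The principal obstacle is that the ODE is singular at $r=0$ (both $\phi'/r$ in $H$ and the ratio $(\phi-r\phi')/\rho$ defining $\eta/\rho$ are formally $0/0$ there), so Picard--Lindel\"of uniqueness does not apply directly. The crux is the algebraic verification that the coefficient of $e_{j+1}$ in the $r^{2j}$-expansion equals $-8e_1(j+1)^2$ and is therefore nonzero for every $j\geq 0$; this requires careful tracking of how $\phi''$ contributes through $H$, how $\phi'/r$ contributes through both $H$ and the factor $\sqrt{1+\phi'^2}$, and how $\phi-r\phi'$ contributes through $\eta$. Once this is secured the recursion is invertible at each step, and the global match with the sphere follows from analyticity.
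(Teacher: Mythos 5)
Your proposal follows essentially the same route as the paper: rotational symmetry reduces \eqref{eqn:Green_surface} to an analytic ODE for the profile function, a Taylor-coefficient recursion in which the top-order unknown appears with the nonzero coefficient (a multiple of $(j+1)^2 e_1$) gives uniqueness of the analytic solution, and the round sphere is checked to be a solution. The paper carries out the same bookkeeping in the variable $t=r^2$ after clearing denominators, obtaining the coefficient $-2(n+1)^2a_1$ for $a_{n+1}$, which is consistent with your claimed $-8e_1(j+1)^2$ once one accounts for the normalization $c=4c_0$.
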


\begin{proof}
    We follow the same notation as in the proof to Theorem \ref{thm:rigidity}(1). Then the equation \eqref{eqn:Green_surface}  becomes 
    $$2H\frac{\eta}{\rho}+4\Big(\frac{\eta}{\rho}\Big)^2=-c=:-4c_0.$$
    Now by our $\Sp^1$-invariant assumption, we can write $f(x)$ as $f(|x|^2)=:f(t)$. Then $\rho=t+f^2(t)$, and
    $$\eta=\frac{f(t)-2tf'(t)}{\sqrt{1+4tf'(t)^2}}.$$
    A direct computation shows
    $$H=\frac{4(f'(t)+tf''(t)+2tf'(t)^3)}{(1+4tf'(t)^2)^{\frac{3}{2}}}.$$
    Then the above equation becomes
    $$\frac{2(f'+tf''+2tf'^3)(f-2tf')}{(t+f^2)(1+4tf'^2)^2}+\frac{(f-2tf')^2}{(t+f^2)^2(1+4tf'^2)}=-c_0.$$
    Clearing the denominator, we get
    \begin{align*}
        &2(f'+tf''+2tf'^3)(f-2tf')(t+f^2)+(f-2tf')^2(1+4tf'^2)\\
        =&-c_0(t+f^2)^2(1+4tf'^2)^2.
    \end{align*}

    Now the real analytic assumption enables us to assume $f(t)=\sum_{n=1}^\infty a_n t^n$. 
    Denote
    \begin{align*}
        t+f^2&=t\sum_{n=0}^\infty A_n t^n;\\
        f-2tf'&=-t\sum_{n=0}^\infty B_n t^n;\\
        f'+tf''+2tf'^3&=\sum_{n=0}^\infty C_n t^n;\\
        1+4tf'^2&=\sum_{n=0}^\infty D_n t^n.
    \end{align*}
    Then we have
    $$A_0=1, \quad A_n=\sum_{\substack{k+l=n+1\\ k,l\geq 1}}a_ka_l,\ (n\geq 1);\quad  B_n=(2n+1)a_{n+1},\ (n\geq 0);$$
    $$C_0=a_1,\quad C_n=(n+1)^2a_{n+1}+\sum_{\substack{k+l+p=n-1\\ k,l,p\geq 0}}2(k+1)(l+1)(p+1)a_{k+1}a_{l+1}a_{p+1},\ (n\geq 1)$$
    and
    $$D_0=1,\quad D_n=\sum_{\substack{k+l=n-1\\ k,l\geq 0}}(k+1)(l+1)a_{k+1}a_{l+1},\ (n\geq 1).$$
    Observe that the expressions of both $A_n$ and $D_n$ only involve $a_1,\cdots, a_n$, while $B_n$ and $C_n$ contain terms involving $a_{n+1}$ as well, with coefficients $2n+1$ and $(n+1)^2$ respectively.

    Now
    \begin{align*}
        \frac{2}{t^2}(f'+tf''+2tf'^3)(f-2tf')(t+f^2)&=-2\sum_{n\geq 0}\Big(\sum_{\substack{n_1+n_2+n_3=n\\ n_i\geq 0}}A_{n_1}B_{n_2}C_{n_3}\Big)t^n,\\
        \frac{1}{t^2}(f-2tf')^2(1+4tf'^2)&=\sum_{n\geq 0}\Big(\sum_{\substack{n_1+n_2+n_3=n\\ n_i\geq 0}}B_{n_1}B_{n_2}D_{n_3}\Big)t^n,\\
        \frac{-c_0}{t^2}(t+f^2)^2(1+4tf'^2)^2&=-c_0\sum_{n\geq 0}\Big(\sum_{\substack{n_1+n_2+n_3+n_4=n\\ n_i\geq 0}}A_{n_1}A_{n_2}D_{n_3}D_{n_4}\Big)t^n.
    \end{align*}
    
     Based on these formulae, we arrive at a recursive formula: $a_1^2=c_0$ and
    $$\Big[-2A_0\Big((2n+1)C_0+B_0(n+1)^2\Big)+2(2n+1)B_0D_0\Big]a_{n+1}=\Phi_n(a_1,\cdots, a_n),$$
    where $\Phi_n(a_1,\cdots,a_n)$ is a polynomial of $a_1,\cdots,a_n$. 
    That is
    $$-2(n+1)^2a_1a_{n+1}=\Phi_n(a_1,\cdots, a_n).$$
    Since $c_0>0$, we can recursively determine all the coefficients. So we have the uniqueness. On the other hand, we can now verify that locally a round sphere with a graph $(x,f(x))$ is a solution, where $f(x)=\frac{1}{\sqrt{c}}(1-\sqrt{1-c|x|^2})$ for $x \in B_{\frac{1}{\sqrt{c}}}(0)$. So it is  the only solution.    
\end{proof}

\appendix

\section{Alternative proof of Theorem \ref{thm:rigidity}(2) when $n\geq 5$}

To start, we need the following elementary lemma comparing extrinsic and intrinsic distance functions. This might be known to experts, though it is difficult for us to identify a proper reference at present. 
\begin{lemma}\label{lem:dist}
    Suppose $P$ is sufficiently close to $Q$, denote by $\gamma$ the unique unit speed minimizing geodesic from $Q$ to $P$ and $\rho(P):=\|P-Q\|^2$. Let $r$ be the intrinsic distance from $Q$ along $\gamma$, then we have
    $$\rho=r^2-\frac{1}{12}II\big(\gamma'(0),\gamma'(0)\big)^2r^4+o(r^4),$$
    along $\gamma$, where $II$ is the second fundamental form.
\end{lemma}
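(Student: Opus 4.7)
The plan is to parametrize the position vector along the minimizing geodesic and Taylor expand its Euclidean distance to $Q$ up to order four. Let $F(s):=\gamma(s)\in\RR^{n+1}$, so that $\rho(\gamma(r))=\|F(r)-F(0)\|^2$. Writing $a_k:=F^{(k)}(0)/k!$, a direct expansion yields
\begin{equation*}
\|F(r)-F(0)\|^2=r^2\|F'(0)\|^2+r^3\langle F'(0),F''(0)\rangle+r^4\Big[\tfrac14\|F''(0)\|^2+\tfrac13\langle F'(0),F'''(0)\rangle\Big]+O(r^5).
\end{equation*}
The unit-speed condition immediately gives $\|F'(0)\|^2=1$, yielding the leading $r^2$.

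The next step is to use the extrinsic form of the geodesic equation. By the Gauss formula $\nabla^{\RR^{n+1}}_X Y=\nabla^M_X Y+II(X,Y)\mathbf{n}$ applied to $X=Y=\gamma'$, and the fact that $\nabla^M_{\gamma'}\gamma'=0$, the Euclidean acceleration is purely normal along the entire geodesic:
\begin{equation*}
F''(s)=II(\gamma'(s),\gamma'(s))\,\mathbf{n}(\gamma(s)).
\end{equation*}
Since $F'(0)=\gamma'(0)\in T_Q M$ while $F''(0)\perp T_Q M$, the cross term $\langle F'(0),F''(0)\rangle$ vanishes, so the $r^3$ coefficient is zero. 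Moreover $\|F''(0)\|^2=II(\gamma'(0),\gamma'(0))^2$.

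For the final ingredient I would differentiate once more. Setting $h(s):=II(\gamma'(s),\gamma'(s))$ we obtain
\begin{equation*}
F'''(s)=h'(s)\,\mathbf{n}(\gamma(s))+h(s)\,d\mathbf{n}_{\gamma(s)}(\gamma'(s)).
\end{equation*}
Pairing with $F'(0)=\gamma'(0)$, the $h'(0)\mathbf{n}(Q)$ piece drops out by normality, while the Weingarten identity $d\mathbf{n}(X)=-S(X)$ together with $\langle S(X),Y\rangle=II(X,Y)$ gives $\langle\gamma'(0),d\mathbf{n}_Q(\gamma'(0))\rangle=-II(\gamma'(0),\gamma'(0))$. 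Hence $\langle F'(0),F'''(0)\rangle=-II(\gamma'(0),\gamma'(0))^2$.

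Substituting back, the $r^4$ coefficient becomes $\tfrac14 II(\gamma'(0),\gamma'(0))^2-\tfrac13 II(\gamma'(0),\gamma'(0))^2=-\tfrac1{12}II(\gamma'(0),\gamma'(0))^2$, which is exactly the desired expansion (with remainder $O(r^5)=o(r^4)$). The only point requiring care—more a bookkeeping issue than a genuine obstacle—is the correct use of sign conventions in the Gauss--Weingarten equations and verifying that $F''(s)=h(s)\mathbf{n}(\gamma(s))$ holds along the entire geodesic, not merely at $s=0$, so that $F'''(0)$ can be evaluated by straightforward differentiation.
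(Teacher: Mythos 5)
Your proof is correct, and it takes a genuinely different route from the paper. The paper proves this lemma by brute force in graph coordinates: it writes $M$ locally as $(x,f(x))$ with $f(0)=0$, $\nabla f(0)=0$, writes out the geodesic system $x''_\alpha+\frac{f_\alpha}{1+|\nabla f|^2}\sum_{\beta,\gamma}f_{\beta\gamma}x'_\beta x'_\gamma=0$, and then computes $\rho'(0)=0$, $\rho''(0)=2$, $\rho^{(3)}(0)=0$, $\rho^{(4)}(0)=-2\,II(\gamma'(0),\gamma'(0))^2$ by successive differentiation of $\rho(t)=|x(t)|^2+f(x(t))^2$ --- a computation that runs to about a page. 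You instead Taylor-expand the chord $\|F(r)-F(0)\|^2$ of the geodesic viewed as a space curve and feed in the Gauss formula ($F''=II(\gamma',\gamma')\,\mathbf{n}$ along the whole geodesic, since $\nabla^M_{\gamma'}\gamma'=0$) and the Weingarten equation; this kills the $r^3$ term and identifies the $r^4$ coefficient as $\tfrac14\|F''(0)\|^2+\tfrac13\langle F'(0),F'''(0)\rangle=\bigl(\tfrac14-\tfrac13\bigr)II(\gamma'(0),\gamma'(0))^2$. Your coefficients in the expansion of $\|F(r)-F(0)\|^2$ check out, and the combination $h(0)\langle\gamma'(0),d\mathbf{n}(\gamma'(0))\rangle=-II(\gamma'(0),\gamma'(0))^2$ is independent of the sign conventions you flag, so the bookkeeping worry is harmless. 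Your argument is shorter, coordinate-free, and makes transparent that the lemma is just the classical chord--arc relation $c^2=s^2-\tfrac{\kappa^2}{12}s^4+O(s^5)$ for a space curve, applied with the observation that the Euclidean curvature of a geodesic of a hypersurface is its normal curvature $|II(\gamma',\gamma')|$; the paper's computation buys nothing extra here beyond staying within the explicit graph framework it has already set up for the rigidity arguments.
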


\begin{proof}
    As before, we assume that $Q=0\in\RR^{n+1}$ and  locally $M$ is given as the graph of a function $f:B_\delta(0)\to \RR$ with $f(0)=0$ and $\nabla f(0)=0$.  Then under the local coordinate chart $x=(x_1,\cdots, x_n)$, we have $g_{\alpha\beta}=\delta_{\alpha\beta}+f_\alpha f_\beta$, and $\Gamma_{\beta\gamma}^\alpha=\frac{f_{\beta\gamma}f_\alpha}{1+|\nabla f|^2}$. Let $\gamma(t)=(x(t), f(x(t)))$ be the unit speed geodesic from $Q$ to $P$. Then we have $x(0)=0$, $\sum_\alpha (x'_\alpha)^2+(\sum_\alpha f_\alpha x'_\alpha)^2=1$, and the system of differential equations for the geodesic: 
    $$x''_\alpha+\frac{f_\alpha}{1+|\nabla f|^2}(\sum_{\beta,\gamma}f_{\beta\gamma}x'_\beta x'_\gamma)=0, \quad \forall~ \alpha.$$
    We also assume that $(f_{\alpha\beta}(0))=\mathrm{diag}(\kappa_1,\cdots,\kappa_n)$, then these $\kappa_\alpha$'s are just the principal curvatures of $M$ at $Q$.
    Now $\rho(t):=\sum_\alpha x_\alpha^2(t)+f^2(x(t))$, so we get
    \begin{align*}
        \rho'&=2\sum_\alpha x_\alpha x'_\alpha+2f\sum_\alpha f_\alpha x'_\alpha
    \end{align*}
    and hence $\rho'(0)=0$.
    Next, we have
    \begin{align*}
        \rho''&=2\sum_\alpha x_\alpha x''_\alpha+2f\sum_\alpha f_\alpha x''_\alpha+2\sum_\alpha ( x'_\alpha)^2+2(\sum_\alpha f_\alpha x'_\alpha)^2+2f\sum_{\alpha,\beta}f_{\alpha\beta} x'_\alpha x'_\beta\\
        &=-2\sum_\alpha (x_\alpha+ff_\alpha)\frac{f_\alpha}{1+|\nabla f|^2}(\sum_{\beta,\gamma}f_{\beta\gamma}x'_\beta x'_\gamma)+2+2f\sum_{\beta,\gamma}f_{\beta\gamma} x'_\beta x'_\gamma\\
        &=\frac{2(f-x\cdot\nabla f)}{1+|\nabla f|^2}(\sum_{\beta,\gamma}f_{\beta\gamma}x'_\beta x'_\gamma)+2.
    \end{align*}
    This yields $\rho''(0)=2$. Taking derivative with respect to $t$ again, we have
    \begin{align*}
        \rho^{(3)}&=\frac{2(f-x\cdot\nabla f)}{1+|\nabla f|^2}\Big[\sum_{\beta,\gamma,\mu}f_{\beta\gamma\mu}x'_\beta x'_\gamma x'_\mu+2\sum_{\beta,\gamma}f_{\beta\gamma}x'_\beta x''_\gamma\Big]\\
        &\quad +(\sum_{\beta,\gamma}f_{\beta\gamma}x'_\beta x'_\gamma)\Big[\frac{2(\sum_\alpha f_\alpha  x'_\alpha-\sum_\alpha  x'_\alpha f_\alpha-\sum_{\alpha,\mu}f_{\alpha\mu}x_\alpha x'_\mu)}{1+|\nabla f|^2}\\
        &\quad -\frac{4(f-x\cdot\nabla f)(\sum_{\alpha,\beta}f_{\alpha\beta}f_\alpha x'_\beta)}{(1+|\nabla f|^2)^2}\Big]\\
        &=\frac{2(f-x\cdot\nabla f)}{1+|\nabla f|^2}\Big[\sum_{\beta,\gamma,\mu}f_{\beta\gamma\mu}x'_\beta x'_\gamma x'_\mu-2(\sum_{\beta,\gamma}f_{\beta\gamma}x'_\beta f_\gamma)\frac{\sum_{\mu,\nu}f_{\mu\nu} x'_\mu x'_\nu}{1+|\nabla f|^2}\Big]\\
        &\quad-2(\sum_{\beta,\gamma}f_{\beta\gamma}x'_\beta x'_\gamma)\Big[\frac{\sum_{\alpha,\mu}f_{\alpha\mu}x_\alpha x'_\mu}{1+|\nabla f|^2}+\frac{2(f-x\cdot\nabla f)(\sum_{\alpha,\beta}f_{\alpha\beta}f_\alpha x'_\beta)}{(1+|\nabla f|^2)^2}]\\
        &=\frac{2(f-x\cdot\nabla f)}{1+|\nabla f|^2}\Big[\sum_{\beta,\gamma,\mu}f_{\beta\gamma\mu}x'_\beta x'_\gamma x'_\mu-4(\sum_{\beta,\gamma}f_{\beta\gamma}x'_\beta f_\gamma)\frac{\sum_{\mu,\nu}f_{\mu\nu} x'_\mu x'_\nu}{1+|\nabla f|^2}\Big]\\
        &\quad-2(\sum_{\beta,\gamma}f_{\beta\gamma}x'_\beta x'_\gamma)\frac{\sum_{\alpha,\mu}f_{\alpha\mu}x_\alpha x'_\mu}{1+|\nabla f|^2}.
    \end{align*}
    This implies $\rho^{(3)}(0)=0$. Finally, we have
    \begin{align*}
        \rho^{(4)}(0)&=-2\Big(\sum_{\beta,\gamma}f_{\beta\gamma}(0)x'_\beta(0)x'_\gamma(0)\Big)\frac{\sum_{\alpha,\mu}f_{\alpha\mu}(0) x'_\alpha(0) x'_\mu(0)}{1+|\nabla f|^2(0)}\\
        &=-2\Big(II(\gamma'(0),\gamma'(0))\Big)^2.
    \end{align*}
    Then by the Taylor formula, we get the expansion of $\rho$ with respect to $r$.
\end{proof}

\begin{proof}[Alternative proof of Theorem \ref{thm:rigidity} (2) when $n\geq 5$] By Lemma \ref{lem:dist}, we obtain the following asymptotic expansion of $G(P,Q)$ for the conformal Laplacian near the diagonal $P=Q$:
\begin{align*}
    G(P,Q)&= c_{n,1}\|P-Q\|^{2-n}=c_{n,1}\rho^{1-\frac{n}{2}}\\
    &=c_{n,1}r^{2-n}\Big(1-\frac{1}{12}II\big(\gamma'(0),\gamma'(0)\big)^2 r^2+o(r^2)\Big)^{1-\frac{n}{2}}\\
    &=c_{n,1} r^{2-n}+\frac{c_{n,1}(n-2)}{24}II\big(\gamma'(0),\gamma'(0)\big)^2 r^{4-n}+o(r^{4-n}).
\end{align*}

Now we need a result due to Parker-Rosenberg
\cite[Theorem 2.2 and (4.4)]{ParkerRosenberg}:
$$G(P,Q)=(4\pi)^{-\frac{n}{2}}\Big[\sum_{k=0}^{[\frac{n-3}{2}]}a_k(P,Q)(\frac{r}{2})^{2-n+2k}\Gamma(\frac{n}{2}-k-1)\Big]+\mathrm{Higher~~order~~terms}$$
with 
$$a_0(P,Q)=(\det g)^{-1/4}=1+\frac{1}{12}\mathrm{Ric}(\gamma'(0),\gamma'(0))r^2+o(r^2)$$
and
$$a_1(P,Q)=\frac{4-n}{12(n-1)}R_g(Q)+o(1).$$
Comparing with our expansion above, we have
$$c_{n,1}=(4\pi)^{-\frac{n}{2}}2^{n-2}\Gamma(\frac{n}{2}-1)$$
and 
$$\frac{c_{n,1}(n-2)}{24}II\big(\gamma'(0),\gamma'(0)\big)^2=(4\pi)^{-\frac{n}{2}}2^{n-2}\Gamma(\frac{n}{2}-1)\Big[\frac{1}{12}\mathrm{Ric}(\gamma'(0),\gamma'(0))-\frac{R_g(Q)}{24(n-1)}\Big],$$
which in turn imply
$$(n-2)II(\gamma'(0),\gamma'(0))^2=2\mathrm{Ric}(\gamma'(0),\gamma'(0))-\frac{R_g(Q)}{n-1}.$$
Taking trace together with the fact that $R_g=H^2-|II|^2$, we obtain $|II(Q)|^2=\frac{R_g(Q)}{n-1}$, i.e. $n|II(Q)|^2=H^2(Q)$, and hence $Q$ is an umbilical point.

The remaining part of proof is the same as \S \ref{subsec:conf_lap}.
\end{proof}


\begin{thebibliography}{99}

\bibitem{Alexandrov}
A. Alexandrov, \textit{A characteristic property of spheres}, Ann. Mat. Pure Appl, 58 (1962), 303-315.

\bibitem{Ache-Chang}
A.  Ache and S.-Y. A. Chang, \textit{Sobolev trace inequalities of order four}, Duke Math. J. 166 (2017), no. 14, 2719-2748.

\bibitem{aubin_book}
T. Aubin, \textit{Some nonlinear problems in Riemannian geometry}, Springer Monographs in Mathematics, Springer-Verlag, Berlin, 1998. 

\bibitem{Beckner} W. Beckner, \textit{Sharp Sobolev inequalities on the sphere and the Moser-Trudinger inequality}, Ann. of Math. 138 (1993), 213-242.

\bibitem{Branson}
T. Branson,
\textit{Group representations arising from Lorentz conformal geometry},
J. Funct. Anal.74 (1987), no.2, 199-291.

\bibitem{Branson95}
T. Branson,
\textit{Sharp inequalities, the functional determinant, and the complementary series},
Trans. Amer. Math. Soc.347(1995), no.10, 3671-3742.



\bibitem{Chang-Yang} S.-Y. A. Chang and P. Yang, \textit{On uniqueness of solutions of n-th order differential equations in conformal geometry}, Mathematical Research Letters 4 (1997), 91–102.

\bibitem{EscSch}
J. Escobar and R. Schoen, \textit{Conformal metrics with prescribed scalar curvature}, Invent. math. 86(1986), 243-254.


\bibitem{Fefferman-Graham}
C. Fefferman and C.R. Graham, \textit{The Ambient metric}, Annals of Mathematics Studies, 178, Princeton University Press, Princeton, NJ, 2012. x+113 pp.

\bibitem{GJMS} 
C.R. Graham, R. Jenne, L. Mason and G. Sparling, \textit{Conformally invariant powers of the Laplacian, I: existence}, J. London Math. Soc. (2) 46 (1992), no.3, 557-565. 

\bibitem{GH} P. Griffiths and J. Harris, \textit{Principles of Algebraic Geometry}, John Wiley \& Sons, 1978.

\bibitem{Gover}
A. Gover,\textit{ Laplacian operators and Q-curvature on conformally Einstein manifolds},  Math. Ann. 336 (2006), no. 2, 311-334.

\bibitem{Graham-Zworski}
C.R. Graham and M. Zworski, \textit{Scattering matrix in conformal geometry}, Invent. Math. 152 (2003), no. 1, 89-118.

\bibitem{Gui-Hu-Xie}
C. Gui, Y. Hu and W. Xie, \textit{Improved Beckner's inequality for axially symmetric functions on $\Sn$}, J. Funct. Anal. 282 (2022), no. 5, Paper No. 109335, 47 pp. 

\bibitem{Hang}
F. Hang, 
\textit{On the higher order conformal covariant operators on the sphere}, Commun. Contemp. Math.9 (2007), no.3, 279-299.

\bibitem{Hang-Yang}
 F. Hang and P. Yang, \textit{The Sobolev inequality for Paneitz operator on three manifolds},
Calc. Var. Partial Differ. Equ. 21 (2004), 57-83.

\bibitem{Hang-Yang16}
 F. Hang and P. Yang, \textit{$Q$-curvature on a class of manifolds with dimension at least 5},
Comm. Pure Appl. Math. 69 (2016), no.8, 1452-1491.

\bibitem{Hartman-Nirenberg}
P. Hartman and L. Nirenberg, \textit{On spherical image maps whose Jacobians do not change sign}, Amer. J. Math.81(1959), 901-920.

\bibitem{Hua_Monog}
L.-K. Hua, \textit{Harmonic analysis of functions of several complex variables in the classical domains.}
Reprint of the 1963 edition
Transl. Math. Monogr., 6
American Mathematical Society, Providence, RI, 1979. iv+186 pp.

\bibitem{Hua}
L.-K. Hua, \textit{Starting with the unit circle}, Springer-Verlag, New York-Berlin, 1981, xi+179 pp.

\bibitem{John} 
F. John, \textit{Plane waves and spherical means}, Reprint of the 1955 original, 
Dover Publications, Inc., Mineola, NY, 2004. iv+172 pp.

\bibitem{Krantz}
S.G. Krantz,
\textit{Function theory of several complex variables.}
Reprint of the 1992 edition
AMS Chelsea Publishing, Providence, RI, 2001. xvi+564 pp.



\bibitem{Lee-Parker}
J. Lee and T. Parker, \textit{The Yamabe problem}, Bull. Amer. Math. Soc. (N.S.) 17 (1987), no. 1, 37-91.

\bibitem{Lu-Tian}
Z. Lu and G. Tian,
\textit{The log term of the Szeg\"o kernel}, Duke Math. J.125(2004), no.2, 351–387.


\bibitem{Massey}
W. Massey, \textit{Surfaces of Gaussian curvature zero in Euclidean 3-space}, Tohoku Math. J. (2)14(1962), 73-79.

\bibitem{Morpurgo} C. Morpurgo,
         \textit{Sharp inequalities for functional integrals and traces of conformally invariant operators},
         Duke Math. J. 114 (2002),  477-553. 

\bibitem{Paneitz}
S. Paneitz, \textit{A quartic conformally covariant differential operator for arbitrary
pseudo-Riemannian manifolds}, preprint, 1983. See also, SIGMA Symmetry Integrability Geom. Methods Appl. 4 (2008), Paper 036, 3 pp.

\bibitem{ParkerRosenberg}
T. Parker and S. Rosenberg, \textit{Invariants of conformal Laplacians}, J. Differential Geometry, 25(1987), no.2, 199-222.

\bibitem{QingRaske}
J. Qing and D. Raske, \textit{On positive solutions to semilinear conformally invariant
equations on locally conformally flat manifolds},
Int. Math. Res. Not.(2006), Art. ID 94172, 20 pp.


\bibitem{SchYau}
R. Schoen and S.-T. Yau, \textit{Lectures on differential geometry}, Conf. Proc. Lecture Notes Geom. Topology, I
International Press, Cambridge, MA, 1994, v+235 pp.

\bibitem{SchYau79}
R. Schoen and S.-T. Yau, \textit{On the proof of the positive mass conjecture in general relativity},
Comm. Math. Phys. 65 (1979), no. 1, 45-76.


\bibitem{Stein-Weiss}
E. Stein and G. Weiss, \textit{Introduction to Fourier analysis in Euclidean spaces}, Princeton Math. Ser., No. 32
Princeton University Press, Princeton, NJ, 1971, x+297 pp.

\bibitem{Xu}
X. Xu, \textit{Uniqueness and non-existence theorems for conformally invariant equations}, J. Funct. Anal. 222 (2005), no. 1, 1-28.

\end{thebibliography}
\end{document}